\numberwithin{equation}{section}
\theoremstyle{plain}
\newtheorem{theorem}{Theorem}
\numberwithin{theorem}{section}
\newtheorem{lemma}[theorem]{Lemma}                              
\theoremstyle{definition}
\newtheorem{definition}[theorem]{Definition}
\newtheorem{notation}[theorem]{Notation}
\newtheorem{remark}[theorem]{Remark}
\def \b {{\beta}}
\def \R {\mathbb{R}}
\def \p {\partial}
\def \t {\tau}
\newcommand\Eb{\mathbb{E}}
\newcommand\Pb{\mathbb{P}}
\newcommand\Wb{\mathbb{W}}
\newcommand\Ib{\mathbb{I}}
\newcommand\eps{\varepsilon}
\newcommand{\NCL}[1]{\mathbb{D} ([0,{#1}];D)}
\newcommand\contr{{\bf I}}
\renewcommand{\red}[1]{\textcolor{red}{#1}}
\def \R  {{\mathbb {R}}}
\def \eps {{\varepsilon}}
\def \t {{\tau}}
\def \p {{\partial}}
\def \b {{\beta}}
\def \It\^o {It\^o }
\def \R {{\mathbb {R}}}
\def \eps {{\varepsilon}}
\def \t {{\tau}}
\def \t {{\tau}}
\def \tilde {\widetilde}
\def \Ã  {{\`a }}
\def \Ã¨ {{\`e }}
\def \Ã² {{\`o }}
\def \Ã¹ {{\`u }}
\def\cI{{\mathcal I}}
\def\cL{{\mathcal L}}
\def\cR{{\mathcal R}}
\def\cS{{\mathcal S}}
\def\1{{\mathbf{1}}}
\DeclareMathOperator*{\argmax}{arg\,max}
\begin{document}

\title{Nash equilibrium in a singular stochastic game between two renewable power producers with price impact}

\author{Stefano Pagliarani
\thanks{Dipartimento di Matematica, Universit\`a di Bologna, Bologna, Italy
  \textbf{E-mail}: stefano.pagliarani9@unibo.it.} 
  \and Antonello Pesce
\thanks{Dipartimento di Matematica, Universit\`a di Bologna, Bologna, Italy
  \textbf{E-mail}: antonello.pesce2@unibo.it} 
  \and Tiziano Vargiolu
\thanks{Dipartimento di Matematica ``Tullio Levi-Civita”, Universit\`a di Padova, Padova, Italy
\textbf{E-mail}: vargiolu@math.unipd.it}}

\date{}

\maketitle

\begin{abstract}
We study the singular stochastic game, formulated in \cite{MR4352705}, 
between two agents aiming at maximizing their profits by installing photovoltaic panels and selling the produced electricity, net of installation costs, in the case that their cumulative installations have an impact on power prices. We first solve explicitly the static, one-step, version of the game, and find that Nash equilibria divide the state-space into four regions: one where both players are idle, two where only one player installs new panels, and one where both players install. In some particular regimes, we find that the latter may not be uniquely distinguished from the previous two. 
We then consider the dynamic, continuous-time, problem. Led by the intuition garnered in the static case, we assume a free-boundary structure similar to that arising in the one-step game and provide a rigorous verification theorem for the corresponding system of free-boundary HJB equations, also taking into account the lack of smoothness of the value functions near the free boundaries. 
Finally, for each regular solution of the HJB system, we show that there exists a unique equilibrium strategy, which is obtained as the solution to the Skorokhod-type problem associated with the free boundary.
\end{abstract}

\noindent \textbf{Keywords}:  singular stochastic control, irreversible investment, variational
inequality, singular stochastic games, Nash equilibria, market impact.\\ \noindent

\noindent \textbf{MSC:} 35C99; 35D99; 35K10; 49L12; 60G99; 60H30; 91B70; 93E20. \\ \noindent

{\noindent\textbf{Acknowledgements}: The authors have received funding from the European Union’s Horizon Europe research and innovation programme under the Marie Sklodowska-Curie Actions Staff Exchanges (Grant Agreement No. 101183168, Call: HORIZON-MSCA-2023-SE-01). The research of AP was supported by the Italian Ministry of University and Research (MUR) under the National Operational Programme “Research and Innovation” 2014–2020, Action IV.6 “Research contracts on green topics”, financed through ESF REACT-EU resources (CUP: J41B21012140007).  
The research of SP was partially supported by the PRIN22 project no. CUP\_E53C23001670001. 
{The research of TV is supported by the INdAM - GNAMPA Project code CUP E53C23001670001 and by the projects funded by the European Union - Next Generation EU, Mission 4 Component 1, 2022BEMMLZ ``Stochastic control and games and the role of information'' CUP C53D23002430006, P20224TM7Z "Probabilistic methods for energy transition"  CUP C53D23008390001. 
The authors wish to thank for fruitful discussions Salvatore Federico.
}

\vspace{5pt}

{\noindent\textbf{Disclaimer}: Funded by the European Union. Views and opinions expressed are however those of the author(s) only and do not necessarily reflect those of the European Union or the European Education and Culture Executive Agency (EACEA). Neither the European Union nor EACEA can be held responsible for them.}

\tiny
\tableofcontents
\normalsize

\section{Introduction}

Amid the ongoing green energy transition, the problem of determining the optimal installation of renewable energy sources (RES) has attracted increasing interest from power producers and regulators. These problems typically lead to multi-player optimization problems (or games) under uncertainty, which may display various mathematical features depending on the structure of the underlying market conditions, such as the number and size of the agents involved, the type of cost–reward functional of each agent, or the impact of renewable production on energy prices, among others. One common feature, however, shared by most RES optimization problems is the irreversibility of investments in new power production, which, from a mathematical standpoint, naturally translates into singular control problems with monotone controls.

In \cite{MR4305783}, the authors studied the problem of quantifying the optimal, irreversible, installation of photovoltaic panels, over an infinite time horizon, by a single agent seeking to maximize the expected profit from the sale of the energy produced, net of installation costs. Critically, the firm is assumed to be sufficiently large for its investment decisions to affect the electricity price. This effect, hereafter referred to as \emph{price impact}, results in a singular stochastic control problem where the control, namely the cumulative installation, affects both the dynamics of the state variable, namely the electricity spot price, and the cost-reward functional. Following mean-reverting specifications that are classical in energy markets (see e.g. \cite{Cartea2005}, \cite{Benth2008} and the references therein), the electricity  price is assumed to follow a controlled Ornstein-Uhlenbeck (OU) process, with the \textit{price impact} implemented so that the long-term mean decreases linearly with the cumulative installation.  
Furthermore, the cumulative installation is constrained by a finite maximum capacity $\theta>0$. 
This additional \emph{finite-fuel} feature forces one to consider the installed power as an additional fully controlled, degenerate, state variable. 
The resulting optimal strategy divides the state space into a waiting region, in which the control is idle, and an activation region, in which the agent installs the minimal capacity necessary to keep the power price lower than a nonlinear function of the cumulative installed capacity. This function determines the free boundary separating the two regions ,and is characterized as the solution of an ordinary differential equation. 
Barrier/trigger-type investment rules are well known in the literature on irreversible investment under uncertainty (real options), where the value function typically solves a free-boundary problem and the state-space splits into a \textit{waiting} and \textit{investing} region (see the classic texts \cite{Dixit1994, MR2898980}). In the continuous and cumulative investment setting, this structure corresponds to a finite-fuel monotone singular control problem, characterized by a variational inequality with a gradient constraint and an associated free boundary (see \cite{MR698818, MR762624, MR653144, MR2179357}.)

{In this paper, we continue 
the analysis initiated in \cite{MR4352705}, where the authors took a first step toward extending the previous problem to a market with $N$ companies, where price impact is induced by their aggregate installations, subject to a global capacity constraint $\theta$. The interactions between the firms can be of different nature and several equilibrium notions can be considered accordingly; among these, the two most well-studied are those of Pareto optimality and Nash equilibrium.
The problem of finding a Pareto optimum was fully solved in \cite{MR4352705}. On the other hand, the problem of finding a Nash equilibrium proves to be much more involved and was solved, in \cite{MR4352705}, only with no price impact. 
Here we aim to address the characterization of Nash equilibria in the general case, namely with non-zero price impact. In accordance with the single-player setting, the problem is formulated as a nonzero-sum game with finite-fuel singular control. In particular, Player $i$ controls their own cumulative installation process, $Y^i_t$, and the joint evolution of these controlled variables affects the electricity price, $X_t$. To promote readability, and to convey a neat intuition of the structure of the equilibria, in this paper we only consider the case of two players. The extension of the results to $N$ players seems feasible to us and is deferred to future research.}

Nonzero-sum stochastic differential games have been extensively studied, both in the regular control setting (see the classic references \cite{Basar1998,MR3752669}) and, more pertinently, in the singular control framework, see \cite{MR3832877, MR3770874, MR3914565, MR4332850, MR4395163}. The typical situation in these games is that, similarly to the control setting, the state-space is partitioned into a joint continuation (i.e. waiting) region, where both players are idle and let the system evolve uncontrolled, and two player-specific activation (i.e. installation) regions where only one agent exerts control with the other one remaining inactive. The novelty in our model is that we also allow both players to install simultaneously, hence we introduce a fourth region, of joint installation, where both players can install simultaneously.

The problem of renewable capacity expansion in the presence of competition has led to a growing body of literature in energy economics and mathematical finance. A variety of modelling frameworks have been employed to capture the inherent trade-offs. A significant branch of the literature addresses competitive renewable capacity expansion through Mean Field Game (MFG) frameworks, which analyse a continuum of small producers.
The paper \cite{MR4672784} studies the long-term development of renewable capacities, focusing on the cannibalization effect (price impact) of aggregate production and the role of subsidies, employing a deterministic price function.
The model in \cite{MR3715150} analyses the competition between technology subtypes (e.g., renewable vs. coal), and likewise utilises a deterministic clearing price which is a function of the aggregate capacity. More recent MFG models incorporate additional layers of uncertainty: \cite{escribe24} focuses on long-term investment under risk aversion and heterogeneous weather conditions, introducing exogenous common noise into the system, while \cite{hubert2025} models endogenous cannibalization effects, where the aggregate decisions of investors themselves influence equilibrium costs and prices. Although these models provide powerful tools for deriving macroscopic market properties and convergence results, they abstract from the direct strategic interaction and individual price impact that characterise a market with a small number of dominant players. 
From a different perspective, the paper \cite{agram2025} explores a computational methodology based on deep learning to address the optimal control of capacity expansion under jump uncertainty. 
Our setting is complementary to these works: it focuses on a small number of strategic agents whose irreversible installations directly affect a stochastic price process, and it relies on tools from singular control theory to derive a detailed equilibrium characterization.

As it turns out, without an a priori assumption on the structure of the continuation and installation regions of the two players, a verification theorem for the game we described above seems out of reach, starting from the very definition of solution to the HJB system, which seems sensitive to this structure. Therefore, in order to gain an intuition of 
the shapes of these regions, we first study a static, one-step, version of the game where the two players can only install at the beginning of the game, i.e. at $t=0$, when they can choose the amount of additional installed capacity, always subject to the constraint that the total capacity, i.e. $Y^1_0 + Y^2_0$, cannot exceed $\theta$. 
This characterizes a 3-dimensional state-space given by the Cartesian product between the whole real line (where the power price $x$ lies) and the 2-dimensional simplex of size $\theta$ (where installed capacities $y = (y_1,y_2)$ lie). Owing to the specific form of the objective functional, which is linear/quadratic with respect to variables $x/y$, respectively,
the resulting Nash equilibrium can be explicitly characterized. For a given initial power price $x$, the equilibrium strategy divides the 2-dimensional simplex into four regions: $\mathbb{W}^1\cap \mathbb{W}^2$, the waiting region common to both players; $\mathbb{I}^1\cap \mathbb{W}^2$ and $\mathbb{W}^1\cap \mathbb{I}^2$, the regions where only one of the players installs and the other one stays idle; and $\mathbb{I}^1\cap \mathbb{I}^2$, a fourth one that is the region of joint installation. Depending on the price level $x$, some of these regions may be absent from the corresponding section of the state-space, but in general one should take into account the presence of all four of them. While the players' strategy in the first three regions is analogous to other examples in the literature (i.e., each player either does not install, or installs enough power to reach the boundary of the common no-installation region), the novelty of this problem is the presence of the fourth region, where both players install. In particular, for fixed $x$, the section of $\mathbb{I}^1\cap \mathbb{I}^2$ is the intersection of a square {$( 0 , \frac{2}{3} A(x) )^2$} with the simplex. If this square is entirely contained in the simplex, then both players install enough capacity to steer their installation from their current state $y$ to the top-right corner of the square (see Figure \ref{fig:onestep_1} below). On the other end, if the top-right corner is outside of the simplex, then the equilibrium strategy is not unique, 
in particular any joint installation that saturate the system 
is a Nash equilibrium (see Figure \ref{fig:onestep_2} below). Another key feature of the solution is that the waiting region of Player $i$ is characterized as the region under the surface $x = F_i(y)$, with $F_i$ being a Lipschitz-continuous monotone function of the capacity, with the property of being constant with respect to the variable $y_j$ when {$y_i>y_j$} (see Figure \ref{fig:onestep_4}). As this particular structure will play a crucial role in the study of the dynamic game, it will be referred to as the \emph{$F$-structure} throughout the rest of the paper.  

With this in mind, we then tackled the dynamic, continuous-time, formulation of our game. Our main result, namely Theorem \ref{th_equilibrium}, can be roughly summarized as follows:
\begin{center}
\emph{``Any (\emph{regular}) solution to the associated variational system of PDEs coincides with the value functions of one, and only one, Nash equilibrium strategy."}
\end{center}
Critically, the correct definition of \emph{regular} solution (to the variational PDE) turns out to be the crux of the argument, and is tightly connected to the \emph{$F$-structure}, which is assumed to hold a priori. More precisely, the result succinctly stated above can be divided in two steps: 
\begin{itemize}
\item[(i)] the verification theorem, and 
\item[(ii)] the construction of the equilibrium strategies.
\end{itemize}
In Step (i), 
 we consider a system of free-boundary HJB PDEs, similar to the one obtained in \cite{MR4352705} with heuristic arguments. The first component of this system 
 can be formally written as
 \begin{equation}\label{HJB_bis_form}
\begin{cases}
\max\{(\cL-\rho)V_1(x,y)+xy_i,\, \p_{y_1}V_1(x,y)-c\}=0, & \text{on }\mathbb{W}^2 \\  
\p_{y_2}V_1(x,y) = 0, & \text{on }\mathbb{I}^2 =  (\mathbb{W}^2)^c 
\end{cases}, 
\end{equation}
where $V_1$ represents the value function of Player 1. On the domain $\mathbb{W}^2$, we have a variational inequality involving a transport operator in the variable $y_1$ and a second order operator in the $x$ variable, i.e. $\cL$, which is the generator of the OU-type process $X_t$. On $\mathbb{I}^2$, we have a transport operator in the variable $y_2$. The situation is totally symmetric for the second component of the system, namely the equation for the value function of Player 2. As previously mentioned, the chances of proving a verification theorem for the system \eqref{HJB_bis_form} are scarce unless one requires some additional structure for the solutions. In analogy with the static game, we then impose that the areas $\mathbb{W}^i$ (and thus $\mathbb{I}^i$), $i=1,2$, are given according to the \emph{$F$-structure} that we explicitely distinguished in the one-step game. Essentially, this allows us to see the variational inequality in \eqref{HJB_bis_form} as a free boundary problem, thus making the equations for $V_1$ and $V_2$ a free-boundary system. More precisely, the \emph{$F$-structure} is encoded in our definition of \emph{regular solution} to the variational PDE (cf. Definition \ref{def:regular_sol_VP}), and eventually in the definition of equilibrium solution (cf. Definition \ref{def:equilibrium_sol}) to the system. The \emph{$F$-structure} also allows us to determine the characterizing behavior of the optimal controls (cf. Definition \ref{def:equilibrium_sol}), and thus the behavior of equilibrium strategies. At this stage, we also had to account for the non-uniqueness of the equilibrium strategies in the region of joint installation, which arose in the one-step game. In particular, we selected a preferred strategy at saturation, by assuming that the players never install when their current level of installed power is greater than or equal than $\frac{\theta}{2}$. With this structure at hand, we were able to prove a verification theorem (cf. Theorem \ref{th:verific_th}), which roughly states that: for a given equilibrium solution to the HJB system, 
any strategy that yields optimal controls for both players, with respect to their respective free boundaries, is a Nash equilibrium. 

We stress that the verification theorem is proved under minimal regularity assumptions on the value function. In particular, our definition of regular solution to the variational PDE imposes (cf. Definition \ref{def:regular_sol_VP}-(i),(ii)), for the value function $V_1$: $C^2$ regularity in the $x$ variable only on $\mathbb{W}_2$, $C^1$ regularity in the $y_1$ variable only on $\mathbb{W}_2$, and $C^1$ regularity in the $y_1$ variable only on a lower neighborhood of $\mathbb{I}_2$. By symmetry, analogous regularity is required on $V_2$. This is the type of regularity that is expected in these types of singular problems (see \cite{cai2023change}), which typically translates into the technical hassle of applying the It\^o formula 
 to a function that is of class $C^{2,1}$ (namely $C^2$ in $x$ and $C^1$ in $y$) only on a closed domain with reflecting boundary. In our proof, we solve this problem via regularization, in both variables $x$ and $y$, a crucial step in passing to the limit being the result that optimal controls make the free boundary attainable with zero probability at each time (cf. Lemma \ref{lem:boundary}).

Concerning Step (ii), given an equilibrium solution to the variational system, we construct the only strategy that yields optimal controls for both players, relative to the free boundaries $F_i$, which is then a Nash equilibrium (cf. Theorem \ref{th_equilibrium}) in light of the verification theorem. Once more, we heavily rely on the \emph{$F$-structure}, which translates the problem of constructing optimal controls into a Skorokhod-type problem. More precisely, we need to build reflected diffusions with respect to the free boundaries, which are the $2$-dimensional surfaces $x = F_i(y)$, $i = 1,2$. Recall that, in our problem, the state-space is three-dimensional, with the diffusion only acting in the $x$ variable. The resulting Nash equilibrium strategies are analogous to the optimal strategies obtained in \cite{MR4305783} in the one-player case. In particular, they can possibly jump only at the initial time, when the initial state is out of the joint continuation region, with a lump installation that steers $(X_0, Y_0)$ to the boundary of $\mathbb{W}^1 \cap \mathbb{W}^2$. For any positive time, instead, the equilibrium strategy is continuous. 

Assuming, for instance, that $y_1<y_2$, namely the initial capacity of Player 1 is lower than the one of Player 2, and that the initial electricity price $x$ is low enough so that $(x,y_1,y_2)\in \mathbb{W}^1 \cap \mathbb{W}^1$, which is equivalent to having $x < F_1(y)$ in light of the \emph{$F$-structure}, the equilibrium strategy works as follows:
\begin{itemize}
\item[-] Initially, both players are idle. Their installed capacities $Y_t =( Y^1_t , Y^2_t)$ remain constant and the electricity price $X_t$ follows an uncontrolled OU process. 
\item[-] When $X_t$ is sufficiently high so that the triple $(X_t, Y^1_t , Y^2_t)$ hits the free boundary of Player 1, namely $X_t = F_1 (Y_t)$, Player 1 installs, continuously, the minimum amount to keep $X_t \leq F_1 (Y_t)$. When $X_t$ is again smaller than $F_1(Y_t)$, Player 1 returns idle, until the boundary of Player 1 is hit again. Note that Player 2 has been idle all along.
\item[-] If the initial capacity of Player 2 were higher than the half-total capacity, i.e. $y_2 \geq \theta/2$, then the two previous regimes keep on repeating until full saturation is reached, i.e. $Y^1_t = \theta - y_2$. On the other hand, if $y_2 < \theta/2$, they repeat so long as $Y^1_t < Y^2_t$.
\item[-] After $Y^1_t = Y^2_t$ for the first time, the two capacities will remain equal forever after. In other words, the two players will follow the same installation strategy and process $Y_t$ will never leave the angle bisector of the first quadrant. In particular, both players will be idle whenever $X_t < F(Y_t) := F_1(Y_t) = F_2(Y_t)$, while they will be active whenever $X_t  = F(Y_t)$ by installing, continuously, the minimum amount to keep $X_t \leq F(Y_t)$.
\item[-] Eventually, the capacity will reach saturation with $Y^1_t = Y^2_t = \theta/2$ and the players will remain idle forever after.
\end{itemize}


The next, natural, step in the analysis would be the study of the solutions to the variational system, namely the existence and uniqueness of its solutions, possibly via (semi-)explicit construction of the value function and the free-boundary, as it was done in \cite{MR4305783} for the single-player case. While this task is the subject of ongoing research, here we draw some preliminary conclusions on the structure of the value function, as a consequence of the regularity requirements discussed above. In particular, we derive a system of three \emph{smooth-fit} conditions (cf. \eqref{eq:system}) that ought to be satisfied on the free-boundary. 

\medskip

The rest of the paper is organized as follows. In Section 2, we introduce the general problem of finding Nash equilibria between the two players, with all the relevant definitions. In Section 3, we study the one-step game obtained by restricting the set of admissible strategies to those that only allow for one initial installation, and derive the corresponding Nash equilibria. 
In Section 4, we go back to the general continuous-time game: in Section \ref{sec:ver}, we introduce the notion of equilibrium solution to the variational system and we prove the verification theorem (Theorem \ref{th:verific_th}), together with the required preliminary results, such as {\color{red} a} weak Ito formula (Lemma \ref{lem:ito}); in Section \ref{sec:construc}, we construct the equilibrium strategy associated to a given solution to the variational system and prove the main result (Theorem \ref{th_equilibrium}); in Section \ref{sec:regularity_V_discus}, we discuss the implications that the regularity assumptions made on the solution of the variational PDE have on the structure of the value function.

\section{A market with two players}\label{sec:game}

Let {$W=(W_t)_{t\geq 0}$} be a standard one-dimensional Brownian motion on  a complete filtered probability space 
$(\Omega,\mathcal{F},P, \{\mathcal{F}_t\}_{t\ge 0})$. Also denote with $x$ a point in $\R$ and with $y=(y_1,y_2)$ a point in $\R^2$. We assume that, in absence of any companies' economic activities, the electricity price $(X^x_t)_{t\ge 0}$ evolves according to a mean-reverting Orstein-Uhlenbeck dynamics
\begin{equation}\label{eq:OU_SDE}
dX^{x}_t=k\left(\mu-X^{x}_t\right)dt+\sigma dW_t, \qquad X^x_0=x,
\end{equation}
for some constants {$k, \sigma >0$} and $\mu\in \R$. 

We consider a market where two producers, indexed by $i=1,2$, operate. {Hereafter, we denote by $0^{-}$ an infinitesimal negative time, precisely we define the ordered set $[0^{-},+\infty):=\{0^-\} \cup [0,+\infty)$ with $0^{-}<t $ for any $t\geq 0$.}
The installed power level of producer $i$ is described by the process 
\begin{equation}\label{installed_power}
Y^{x,I,i}_t=y_i+I^i_t, \qquad t\ge 0^-,
\end{equation}
where $y_i$ is the initial level of installed power, $I_t=(I_t^1,I_t^2)$ is the total power installed by the two producers {at time $t$}, 
and each component $I^i_t$ identifies the $i$-th company's control variable, with $I^i_{0-} = 0$, $i = 1,2$. {We consider the case of irreversible installation, and thus the process $I^i$ (and $Y^i$) are nondecreasing.} We also assume that the total installed power level cannot exceed a certain threshold $\theta$. {This constraint might be motivated by different reasons, e.g. only a finite real estate is available for the installation of solar panels or wind power plants. Mathematically speaking, this gives rise to a so called \emph{finite-fuel} singular stochastic control framework: this is the main motivation for introducing a new state variable $(Y_t)_t$ that coincides, up to fixing the initial value, with the control $(I_t)_t$.}  Accordingly, {for a given $y\in D$, with  
\begin{equation}
D:=\{y=(y_1,y_2)\in\R^2, \; y_i\ge 0, \; y_1+y_2<\theta\},
\end{equation}}
the set of \textit{admissible controls} is given by 
\begin{equation}\label{eq:def_strat_ammiss}
\cI (y) := \left\{ I:{[0^-,\infty)}\times \Omega \mapsto [0,\infty)^2, \text{non-decreasing, cadlag}, 
\; {I_{0^-} = (0,0)\text{ and }}\sum_{i=1}^2(y_i+I^i_t)\le \theta \right\}.
\end{equation}
Notice that each player is constrained, in its strategy, by the installation strategy of the other.
Following  \cite{MR4305783}, we assume that the current total level of electricity production, which is proportional to $\sum_{i=1}^2Y^i_t$, has a negative effect on the electricity price: {its long-term mean parameter is reduced, at time $t$, by $\beta \sum_{i=1}^2Y^i_t$}, for some $\beta>0$.
Therefore, the spot price $(X^{x,y,I}_t)_{t\ge 0}$ evolves according to 
\begin{align}\label{model}
\begin{cases}
dX^{x,y,I}_t=k\left(\mu-\b \sum_{i=1,2}Y_{t}^{y,I,i}-X^{x,y,I}_t\right)dt+\sigma dW_t, \qquad t>0,\\
X^{x,y,I}_0=x.
\end{cases}
\end{align}
{
\begin{remark}\label{eq:sde_well_posed}
Given $(x,y)\in \R \times D$ and $I\in \cI (y)$, the stochastic system \eqref{installed_power}-\eqref{model} is strongly well-posed, meaning that there exists a pathwise unique strong solution to \eqref{installed_power}-\eqref{model}.
\end{remark}
}
In this setting, each company aims at maximizing the profit, derived from selling electricity in the market, which is described, for any admissible control $I$, by the 
utility functional 
\begin{equation}\label{cost}
\cS_i(x,y,I)=\mathbb{E}\left[\int_{0}^{\infty}e^{-\rho\t}\,X^{x,y,I}_{\tau}\, Y^{y,I,i}_{\tau}\, d\t-c\int_{{0^-}}^{\infty}e^{-\rho\t}dI^i_{\tau}\right],
\end{equation}
where {$\rho>0$} is a discount factor and $c\ge 0$ is a constant representing the cost of installing one unit of power. 

\subsection{Problem formulation}

In \cite{MR4352705}, the authors consider {a cooperative situation motivated by the presence of a social planner}, where the problem consists of finding an efficient installation control ${I\in \cI (y)}$ which maximizes the 
aggregate expected profit, net of investment cost. This is known as a Pareto optimum and is expressed as
$$\sup_{{I\in  \cI (y)}}\cS_{SP}(x,y,I),\qquad  \cS_{SP}(x,y,I):=\cS_1(x,y,I)+\cS_2(x,y,I).$$
 Setting $\bar{y}=y_1+y_2$, $\bar{I}_{\tau}=I^1_{\tau}+I^2_{\tau}$ {and $\bar{Y}_t=Y^1_t+Y^2_t$}, it is straightforward to check that 
\begin{equation}
\cS_{SP}(x,y,I)=\mathbb{E}\left[\int_{0}^{\infty}e^{-\rho\t}\,X^{x,y,I}_{\tau}\, {\bar{Y}_{\tau}}\, d\t-c\int_{0}^{\infty}e^{-\rho\t}d\bar{I}_{\tau}\right].
\end{equation}
Then the aggregate optimal strategy for the social planner is equivalent to the optimal control for a single player, as investigated in \cite{MR4305783}. Such strategy does not characterize the single installations $I^i$, and it is indeed not unique. Notice as well that the problem can be formulated for $N$ players without any  modification. 

In this paper we look for an equilibrium solution to the competitive game between the two companies, also known as Nash equilibrium. Generally speaking, a Nash equilibrium is {a locally} optimal control for both players, in the sense that neither one can improve their profit by changing their control while the other keeps theirs unchanged. {We introduce a \emph{closed loop}-type notion of Nash equilibrium that is suitable for singular stochastic control problems.
We first need to introduce some additional notations as well as the notion of \emph{admissible strategy}.}

\begin{notation}
For a given $t>0$, we denote by $C([0,t])$ and $\NCL{t}$ the sets of real-valued continuous functions and non-decreasing {$D$-valued} c\`adl\`ag functions, respectively, defined on $[0,t]$.
\end{notation}

\begin{definition}[Admissible strategy]\label{def:markov_strategy}
For a given $(x,y)\in \R\times D$, we call \emph{admissible strategy} a family $\contr = (\contr_t)_{t\geq 0}$ of Borel-measurable functions $\contr_t:  C([0,t])\times \NCL{t} \to \R^2$ such that the control $I = (I_t)_t$, defined by
\begin{equation}\label{eq:contr_markov}
I_t = \contr_t\big((X^{x,y,I}_s)_{s\leq t}, (Y^{y,I}_{s})_{s\leq t}\big), \qquad t\geq 0.
\end{equation}
belongs to $\mathcal{I}(y)$. The set of all admissible strategies for the state $(x,y)\in \R\times D$ is denoted by $\mathcal{A}^{(x,y)}$. 
\end{definition}
{Note that the set of admissible strategies $\mathcal{A}^{(x,y)}$ is well-defined in light of Remark \ref{eq:sde_well_posed}. Also, in the cases when there is no risk of ambiguity, we will omit the superscript on $\mathcal{A}^{x,y}$  and simply write $\mathcal{A}$. 
\begin{definition}[Nash equilibrium]\label{def:nash_equilibrium} {For a given $(x,y)\in \R\times D$, a}  function $\contr \in \mathcal{A}$ is called a \emph{Nash equilibrium} if, for any $\{i,j\}$ permutation of $\{1,2 \}$ and $(x,y)\in\R\times D$ we have:
\begin{equation}\label{eq:nash_equilibrium}
\cS_i(x,y,I)\ge \cS_i(x,y,I'), 
\end{equation}
with {$I$ being the only control in $\mathcal{I}(y)$ satisfying in \eqref{eq:contr_markov}}, for any $I'\in\mathcal{I}(y)$ such that 
\begin{equation}
I'^j_t = \contr^{j}_t\big((X^{x,y,I'}_s)_{s\leq t}, (Y^{y,I'}_{s-})_{s\leq t}\big), \qquad t>0.
\end{equation} 
\end{definition}
{\begin{remark}
As our approach to the problem is based on the study of the associated HJB system and its related verification theorem, it is crucial that the notion of Nash equilibrium is in feedback form (i.e. closed-loop). However, we stress that the suboptimal strategies $I'$ in \eqref{eq:nash_equilibrium} are of mixed type, meaning that the $j$-th player's control $I'^j$ is in feedback-form, but the $i$-th player can employ any control $I'^i$ as long as the pair $I'=(I'^i,I'^j)$ is admissible. This is stronger than requesting \eqref{eq:nash_equilibrium} to hold for any $I' = \contr'_{\cdot}\big((X^{x,y,I}_s)_{s\leq \cdot}, (Y^{y,I}_{s})_{s\leq \cdot}\big)$ with $\contr' \in \mathcal{A}$.
\end{remark}
\begin{remark}
An open-loop type notion of Nash equilibrium can be defined as a control $I\in \mathcal{I}(y)$ such that, for any $\{i,j\}$ permutation of $\{1,2 \}$ and $(x,y)\in\R\times D$ we have:
\begin{equation}\label{eq:nash_equilibrium_open}
\cS_i(x,y,I)\ge \cS_i(x,y,I'), 
\end{equation}
for any $I'\in \mathcal{I}(y)$ such that $I'^j = I^j$. Note that there are at least two (non-symmetric) trivial Nash equilibria of this type, given by 
\begin{equation}
I^1_t = \bar I_t \text{ (or $I^1_t = 0$)}, \qquad I^2_t = 0\text{ (or $I^2_t = \bar I_t$)}, \qquad t\geq 0,
\end{equation}
with $\bar I$ being the optimal strategy in \cite{MR4305783} for the one-dimensional problem starting from the initial installed power $\bar y = y_1 + y_2$. Indeed, if $I^2 \equiv 0$, then it is obviously optimal for Player 1 to install according to $\bar I$. On the other hand, in order for Player 1 to be able to implement $\bar I$, Player 2 should not install at any time
, and thus $I^2 \equiv 0$ is the only possibile control such that $(\bar I, I^2)\in \mathcal{I}(y)$. This stems from the finite-fuel nature of the control problem, in particular the fact that the total installation is bounded by $\theta$. Indeed, if Player 1 installs according to $\bar I$, then $Y^1_t = \theta-y_1-y_2$ almost surely for $t$ sufficiently large: this implies that $I^2\equiv 0$.
\end{remark}
}

\section{One-step game}\label{sec:one_step}

To develop an intuition on how a Nash equilibrium strategy should work and to introduce some key concepts for the subsequent analysis, we first analyze {a static game} where both players can install only at time $t=0$. In this situation, an admissible control {$I_t \equiv I=(I^1,I^2)$} (equivalently a strategy) 
is completely determined by the deterministic values 
\begin{equation}\label{eq:static_admissible}
I^i={\bf I}^i(x,y)\ge 0, \quad {i=1,2,}\qquad \text{s.t.}\quad \sum_{i=1}^2 (y_i+I^i)\leq \theta.
\end{equation} 
with ${\bf I}^i : \mathbb{R} \times D \to D$. 
{As the solution to \eqref{model}, in this case, is simply an Orstein-Uhlenbeck process, the expected value in \eqref{cost} can be trivially computed. In particular, the expected profit reads as} 
\begin{equation}\label{eq:profit_onestep}
\mathcal{S}_i(x,y,I)=(y_i+I^i)\frac{x\rho+\mu k-\beta k \langle {\bf 1}, y+I \rangle}{\rho (\rho+k)}-c\, I^i,
\end{equation}
with ${\bf 1} := (1,1)$, 
which is a quadratic function of $I^i$. Then the equilibrium condition \eqref{eq:nash_equilibrium} of Definition \ref{def:nash_equilibrium} can be  expressed as follows. 
{\begin{definition}[Nash equilibrium for the one-step game]
A function ${\bf I}:\R\times D \to \R^2$ is a Nash equilibrium if 
the constraints \eqref{eq:static_admissible} are satisfied and, for any $\{i,j\}$ permutation of $\{1,2 \}$, we have
\begin{equation}
\mathcal{S}_i(x,y,I) \geq \mathcal{S}_i(x,y,I'), \qquad (x,y)\in\R\times D,
\end{equation}
with $I' = {\bf I}'(x,y)$, for any ${\bf I}':\R\times D \to \R^2$ satisfying the constraints \eqref{eq:static_admissible} and such that ${\bf I}'^j={\bf I}^j$.
\end{definition}
More explicitly, a function ${\bf I}:\R\times D \to \R^2$ satisfying the constraints \eqref{eq:static_admissible} is a Nash equilibrium if
\begin{equation}\label{Nash_onestep}
(I'^i-I^i)(2A(x)-y_j - I^j-I'^i-I^i)\le 0,  \qquad  I'^i\in [0,\theta-y_i-y_j-I^j],
\end{equation}
for any $(x,y)\in\R\times D$, where $A(x)$ is the linear function of the initial spot price given by
\begin{equation}\label{A(x)}
A(x):=\frac{x\rho+\mu k-c\rho (\rho+k)}{2\beta k}.
\end{equation}}

\subsection{Equilibrium strategies}
{Fix hereafter $(x,y)\in\R\times D$. For any $\{i,j \}$ permutation of $\{1,2\}$, a}ssume that {Player $j$} chooses the 
strategy {$I^j = {\bf I}^j (x,y)$}. Then we look for a strategy $I^i$ given by
\begin{equation} \label{argmax}
I^i = \argmax_{I'^i\in [0,{\theta-y_i-y_j-I^j}]} \mathcal{S}_i(x,y,I'^i,I^j).
\end{equation}
{Therefore, as $\mathcal{S}_i(x,y,\cdot, I^j)$ is a convex quadratic function, by simply differentiating \eqref{eq:profit_onestep}
 it is easy to check that the set of the Nash equilibria is given by the pairs $I=(I^1,I^2)\in\R^2$ such that
\begin{equation}\label{eq:onestep_1}
\begin{cases}
I^1=\min\{\theta{-y_2-I^2}-y_1, A(x)-\frac{1}{2}{(y_2+I^2)}-y_1\}\vee 0\\
I^2=\min\{\theta{-y_1-I^1}-y_2, A(x)-\frac{1}{2}{(y_1+I^1)}-y_2\}\vee 0
\end{cases}.
\end{equation}
}
This means that we can characterize a {Nash equilibrium $I=({I^1},I^2)$} by the geometric properties of the locus of points {of the expressions in} \eqref{eq:onestep_1}, depending on $A(x)$, which is a linear function of the price. Let us first consider the intersection of the lines 
\begin{align}
P(y)&={\left\{(z_1,z_2)\in \R^2, \; z_1=A(x)-\frac{1}{2}(y_2+z_2)-y_1, z_2=A(x)-\frac{1}{2}(y_1+z_1)-y_2 \right\} }\\
&=\left(\frac{2}{3}A(x)-y_1,\frac{2}{3}A(x)-y_2 \right), \label{eq:onestep_2}
\end{align}
which corresponds to the solution to the optimization problem in the absence of constrains. 
By \eqref{eq:onestep_1} we can make the following preliminary observations: 
\begin{itemize}
\item[-] If $\frac{2}{3}A(x)-y_i\le 0$ {for $i=1,2$, then $I^1=I^2=0$, meaning neither player does install}.
\item[-] If $\frac{2}{3}A(x)-y_2\le 0$ and $\frac{2}{3}A(x)-y_1> 0$, then {$I^2=0$ and} by \eqref{eq:onestep_1}, $I^1=\min \{\theta-y_1-y_2,A(x)-y_1-\frac{y_2}{2}\}\vee 0$. 
{As $y\in D$,} this means that {Player $1$} installs {only} when $A(x)-y_1-\frac{y_2}{2}> 0$, and saturates the capacity of the system if 
$${\theta-y_1-y_2\le A(x)-y_1-\frac{y_2}{2}\quad \Longleftrightarrow \quad A(x)+\frac{y_2}{2}\ge \theta}.$$
{Viceversa if $\frac{2}{3}A(x)-y_1\le 0$ and $\frac{2}{3}A(x)-y_2> 0$.} 
\item[-] If $\frac{2}{3}A(x)-y_i >  0$ {for $i=1,2$, then both players do install (as $y\in D$)}. 
\end{itemize}
%

Let us now visualize both players' strategies, depending on the {initial electricity price $x$.} 
We may distinguish four cases: 
\begin{itemize}
\item[i)]$A(x)\le 0$: in this situation $\frac{2}{3}A(x)-y_i {\leq} 0$ for both $i=1,2$ {(as $y\in {D}$)}, therefore $I^1=I^2=0$. Roughly speaking, the electricity price is too low to justify any installation. 
\item[ii)]$0< A(x) \leq \frac{\theta}{2}$: in this price interval one or both players may install, depending on the initial condition $y\in {D}$. However the system will never reach saturation after any installation: indeed, let us assume for instance that $I^1>0$, and suppose by contradiction that the system reaches the maximum capacity. Then clearly
\begin{align*}
I^1&=\theta-y_2-I^2-y_1
\intertext{(by the first equation in \eqref{eq:onestep_1})}
&\le A(x)-\frac{1}{2}(y_2+I^2)-y_1.
\end{align*}
Adding $y_1$ on both sides of the inequality and exploiting the price regime $A(x)\leq \frac{\theta}{2}$, we get 
\begin{align*}
\theta-y_2-I^2 &\le A(x)-\frac{1}{2}(y_2+I^2)\le  \frac{1}{2}(\theta-y_2-I^2),  
\end{align*}
which yields $\theta-y_2-I^2=0$. Therefore $y_1=I^1=0$, in contradiction with the assumption $I^1>0$. An analogous situation verifies if $I^2>0$.

{In particular:}
\begin{itemize}
\item[ii-a)] If $\frac{2}{3}A(x)-y_i> 0$ for both $i=1,2$, i.e. the intersection point \eqref{eq:onestep_2} corresponds to an admissible strategy, then
$$I^i=\frac{2}{3}A(x)-y_i,  \qquad i=1,2.$$ 
Notice that, regardless on the initial level of installed power, both players reach the same level {(exactly $\frac{2}{3}A(x)$)} after installation. {We will comment again on this {below, in Section \ref{sec:free_bound_static}}.}
\item[ii-b)] If $\frac{2}{3}A(x)-y_2\le 0$ and $\frac{2}{3}A(x)-y_1> 0$, by \eqref{eq:onestep_1} we have $I^2=0$, and 
\begin{equation}\label{eq:onestep_4}
I^1=\big( A(x)-\frac{1}{2}y_2-y_1 \big)\vee 0,
\end{equation} 
{with Player $1$ reaching level $(A(x)-\frac{1}{2}y_2) \vee y_1$ after installation. Viceversa for $\frac{2}{3}A(x)-y_1\le 0$ and $\frac{2}{3}A(x)-y_2> 0$}
\item[ii-c)] Lastly, if $\frac{2}{3}A(x)-y_i\le 0$ for both $i=1,2$, then $I^1=I^2=0$, {meaning that neither player installs.}
\end{itemize}
\item[iii)] $\frac{\theta}{2}<A(x)\le \frac{3}{4}\theta$: this case is {the same as ii) if $y_i \leq 2(\theta-A(x))$ for both $i=1,2$. 
If $y_2 > 2(\theta-A(x))\ge \frac{2}{3}A(x)$, we have instead}
$$A(x)-\frac{1}{2}y_2>\theta-y_2, $$
and therefore {Player $1$} must install up to saturating the total capacity $\theta$. Precisely we have 
\begin{equation}\label{eq:I_case_iii}
I^1=\theta-y_1-y_2, \qquad I^2=0,
\end{equation}
{
with Player 1 reaching the level $\theta - y_2$. Viceversa, for $y_1 > 2(\theta-A(x))$, the conclusions above hold true by swapping the subscripts $1$ and $2$.}
See Figure \ref{fig:onestep_1} for a visual representation of cases ii) and iii).
\begin{figure}[!h]
\centering
\includegraphics[scale=0.45]{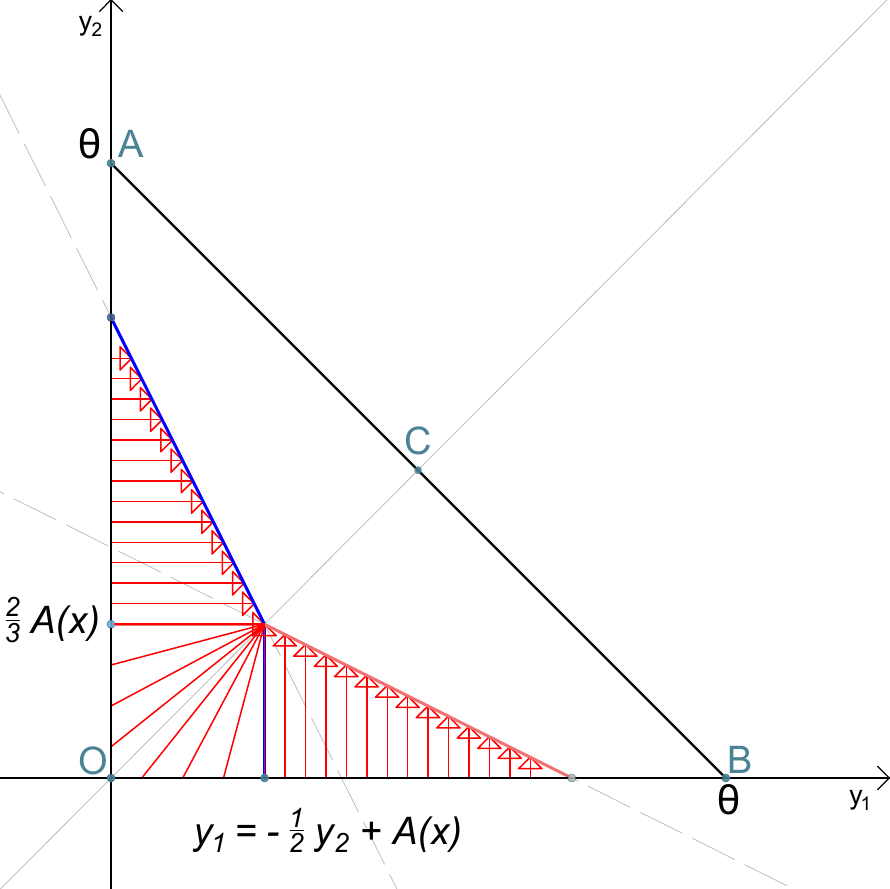} \quad
\includegraphics[scale=0.43]{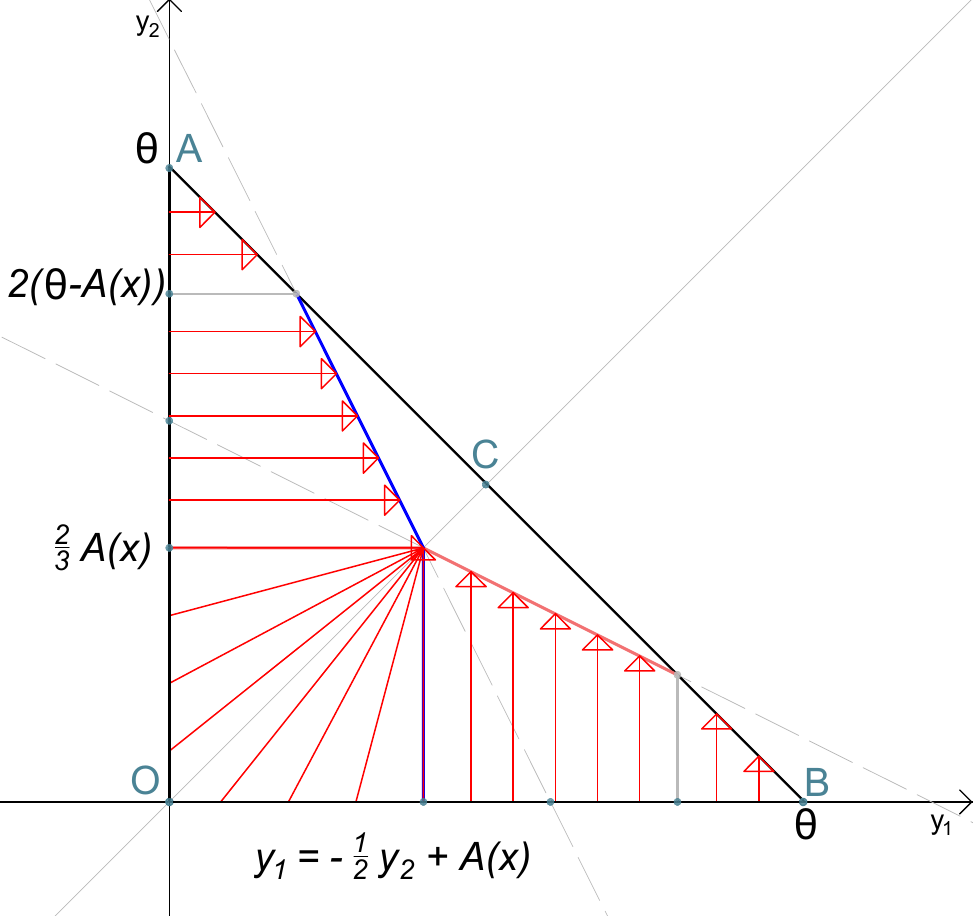}
\caption{{Equilibrium strategies in cases  ii) and iii): only Player 1 installs (${\red \to}$); only Player 2 installs (${\red \uparrow}$); both players install (${\red \nearrow}$); neither player installs (blank).}
}\label{fig:onestep_1}
\end{figure}
\item[iv)]
$A(x)>\frac{3}{4}\theta$: for this price level, the system always reaches a saturation state. In particular: 
\begin{itemize}
\item[iv-a)]
If $y_2>\frac{2}{3}A(x)$, then {$I$ is like in \eqref{eq:I_case_iii}, in particular Player 1 saturates the total capacity $\theta$. Viceversa if $y_2>\frac{2}{3}A(x)$. 
\item[iv-b)] If $y_i\leq\frac{2}{3}A(x)$ for both $i=1,2$, then} it is straightforward to check that {the set of the pairs $I=(I^1,I^2)\in\R^2$ such that
\begin{equation}\label{eq:onestep_1_bis}
\begin{cases}
I^1=\min\{\theta{-y_2-I^2}-y_1, A(x)-\frac{1}{2}{(y_2+I^2)}-y_1\} \\
I^2=\min\{\theta{-y_1-I^1}-y_2, A(x)-\frac{1}{2}{(y_1+I^1)}-y_2\}
\end{cases}
\end{equation}
}
corresponds to{
\begin{equation}
\left\{I=(I^1,I^2)\in\R^2 ,\, y_1+I^1+ y_2 + I^2=\theta, \, I^1\ge 2 (\theta-A(x))-y_1, \, I^2\ge 2 (\theta-A(x))-y_2 \right\}.
\end{equation}}
{A visual representation is given in} Figure \ref{fig:onestep_2bis}.
\begin{figure}[!h]
\centering
\includegraphics[scale=0.45]{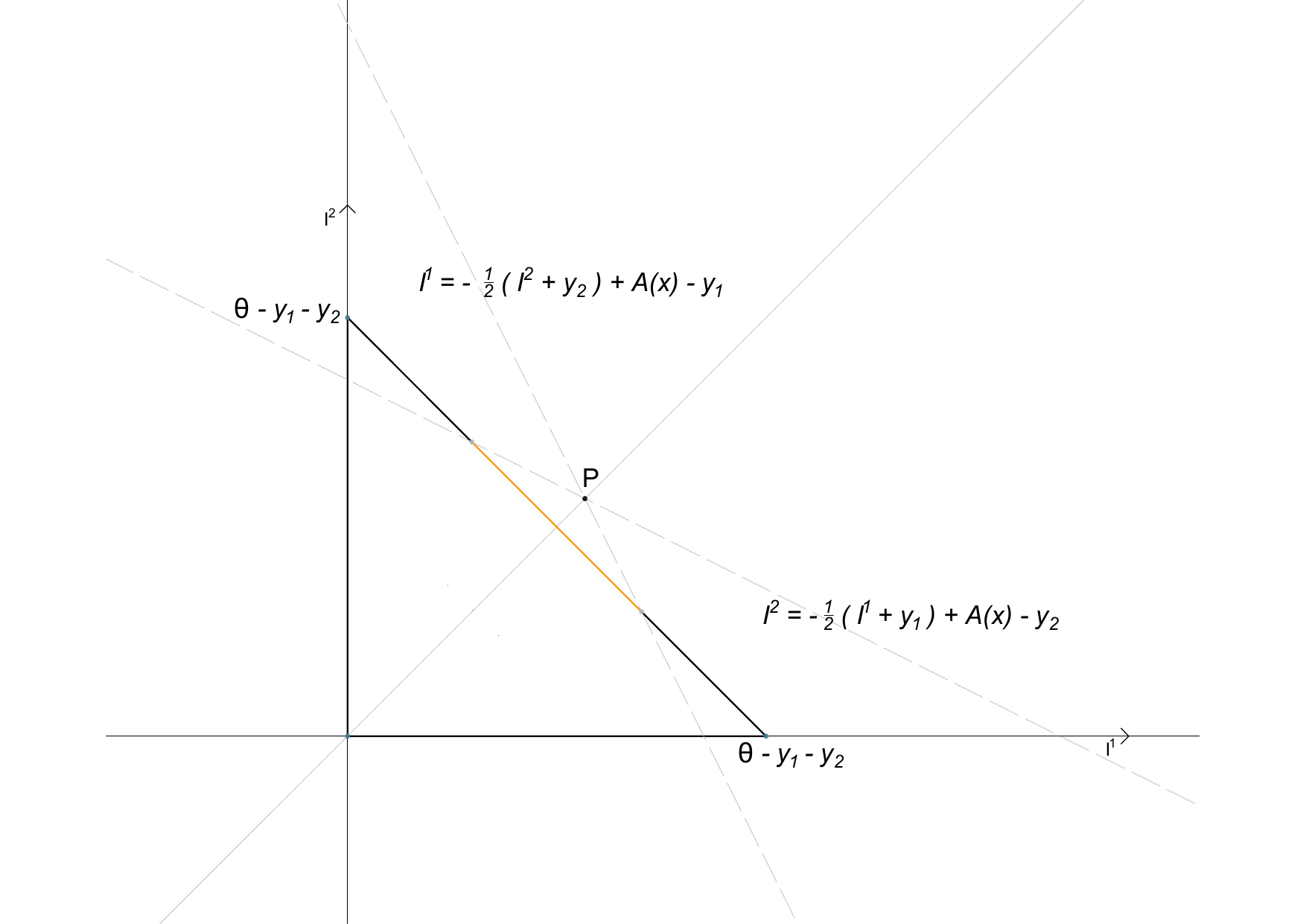}
\caption{{Case iv-b): all the Nash equilibria lie on the yellow segment.}
} \label{fig:onestep_2bis}
\end{figure}
This means that \textit{any} admissible strategy which saturates the capacity of the system is an equilibrium strategy. {In particular, these are infinitely many.}
{In Figure \ref{fig:onestep_2} we show two possible strategies at saturation. The strategy \textbf{a} appears to be more regular with respect to $x$, while on the other hand, strategy \textbf{b} is easier to implement. Hereafter will set our analysis to match the latter idea, {which is easier to generalize to the dynamic game}. }
\begin{figure}[!h]
\centering
\includegraphics[scale=0.45]{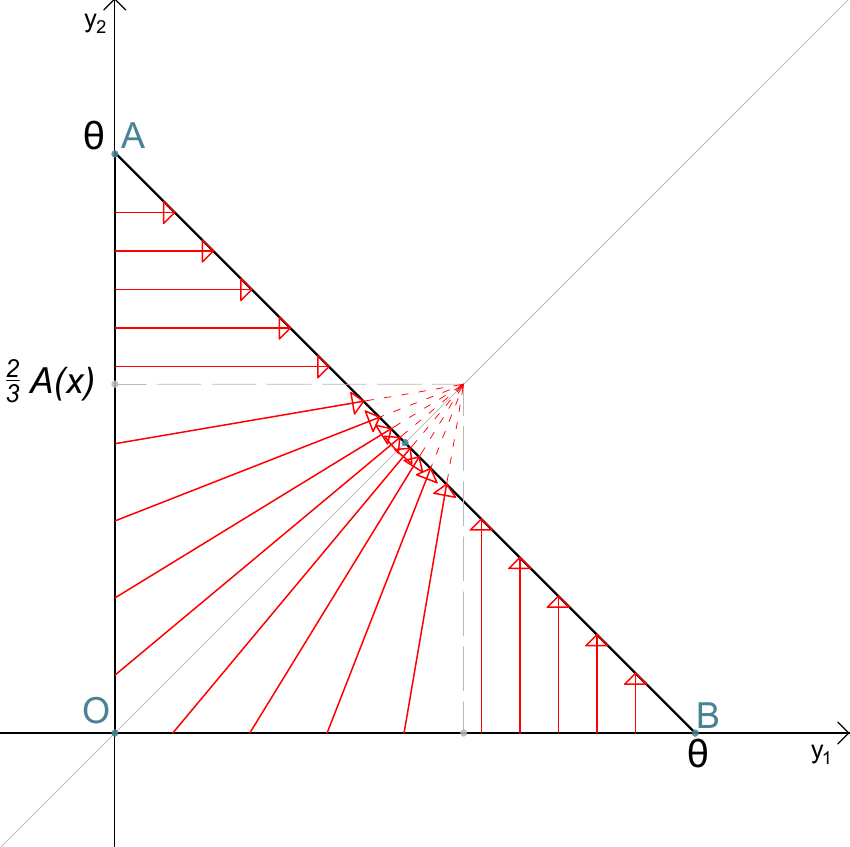}\quad
\includegraphics[scale=0.4]{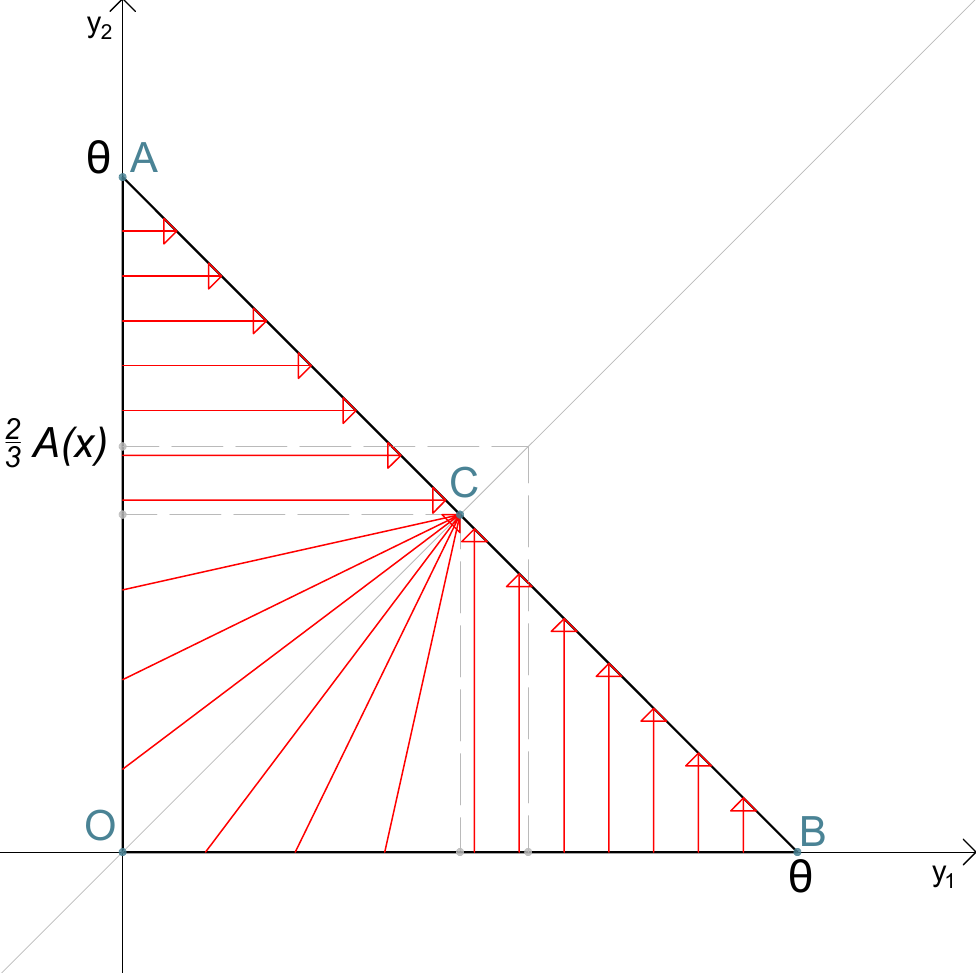}
\caption{{Equilibrium strategies in case iv-b), strategies  \textbf{a} (left) and \textbf{b} (right): only Player 1 installs (${\red \to}$); only Player 2 installs (${\red \uparrow}$); both players install (${\red \nearrow}$). 
}
} \label{fig:onestep_2}
\end{figure}
\end{itemize}
\end{itemize}
{The reader can easily check \eqref{eq:onestep_1} through cases i) to iv)}. 

\subsection{Characterization of the free boundaries: the \emph{$F$-structure}}\label{sec:free_bound_static}

Let us examine Figure \ref{fig:onestep_1}{, corresponding to cases (ii) and (iii) listed above, namely $0< A(x) \leq \frac{\theta}{2}$ and $\frac{\theta}{2}<A(x)\le \frac{3}{4}\theta$}, respectively. The {blue segments} 
can be interpreted as the level set of a boundary function $F_1:{D}\mapsto \R$, which separates two regions relative to Player 1, that we might call:
\begin{itemize}
\item the \textit{installation region} $\mathbb{I}^1$, composed by all the points $(x,y)$ for which, at Nash equilibrium, the optimal installation $I^1$ defined in Equation \eqref{argmax} is strictly positive, and 
\item the \textit{waiting region} $\mathbb{W}^1 := (\mathbb{R} \times D) \setminus \mathbb{I}^1$. 
\end{itemize}
Similarly, the red segments constitutes the level set of a boundary function $F_2$, separating Player 2's installation region $\mathbb{I}^2$ and waiting region $\mathbb{W}^2$. For now, these regions are intuitively described as above, while the formal definitions (where for example we will require $\mathbb{I}^i$, $i = 1,2$, to be closed sets for technical reasons) will be given later. Clearly $F_1(y_1,y_2)=F_2(y_2,y_1)$ by the symmetry of the game.
%

In order to extend the computation above to the dynamic case, it is crucial to understand the structure of these regions and {their} boundaries. 
For any {$y\in \overline{D}\cap \{y_1\le y_2\}$} we set {
\begin{equation}\label{eq:onestep_freebound}
{F}(y):=A^{-1}\left(\frac{y_1}{2}+y_2\right),
\end{equation}
and 
\begin{equation}
F_2(y_1,y_2) := F(y_1,y_2), \qquad F_1(y_1,y_2) := F_2(y_2,y_1).
\end{equation}}
\begin{remark}\label{rem:increasing_F}
As $A$ is an increasing function, the boundary function $F_1 = F_1(y_1,y_2)$ defined above is increasing in both variables. The same holds true for $F_2 = F_2(y_1,y_2) = F_1(y_2,y_1)$.
\end{remark}
Then we can characterize the waiting and installation regions for {Player $2$} as follows:
{\begin{equation}\label{eq:onestep_wait}
\mathbb{W}^2:=W^{\text{free}}_{{2}}\cup W^{\text{prol}}_{{2}}\cup W^{\text{sat}}_{2}, \qquad \mathbb{I}^2:=(\R\times  {D} )\setminus \mathbb{W}^2.
\end{equation}
with
\begin{align*}
W^{\text{free}}_{{2}}&:=\left\{(x,y)\in \R\times {D}, \, y_1\geq y_2, \, x < F_2(y)\right\},\\
W^{\text{prol}}_{{2}}&:=\left\{(x,y)\in \R\times {D}, \, {y_1 \leq y_2 <  \frac{\theta}{2}}, \, x<F_1(y_1,y_1)\right\},\\
W^{\text{sat}}_{2}&:=\left\{(x,y)\in \R\times {D}, \, y_2\geq \frac{\theta}{2}\right\}.
\end{align*}}
Analogous, symmetric, definitions can be given for the areas $\mathbb{W}^1$ and $\mathbb{I}^1$. As this way of defining the regions $\mathbb{W}^i$ and $\mathbb{I}^i$ will play a crucial role in the study of the dynamic game, we will refer to it hereafter as the \emph{$F$-structure}. 
 The three regions that compose $\mathbb{W}^2$ can be interpreted as follows
\begin{remark}\label{rem:interpretation_W}
The choice of the suffixes ``free", ``prol" and ``sat" is motivated as follows. {$W^{\text{free}}_2$ is defined as the sub-graphic of the boundary function $F_2$, which, in the continuous-time dynamic game, will be understood as a free boundary of the HJB system. $W^{\text{prol}}_{2}$ is defined as the sub-graphic of the function given by the values $F_2(y_2,y_2)$ prolonged onto the points $(y_1,y_2)$. The component $W^{\text{sat}}_{2}$ matches the choice of strategy \textbf{b} in case iv): indeed player $2$ does not install if his starting level of installed power is higher than $\frac{\theta}{2}$, which can be understood as a saturation point for the single player.}
\end{remark} 
{In Figure \ref{fig:onestep_4} we may observe the typical boundary structure in the three dimensional space. }
\begin{figure}[!h]
\centering
\includegraphics[scale=0.38]{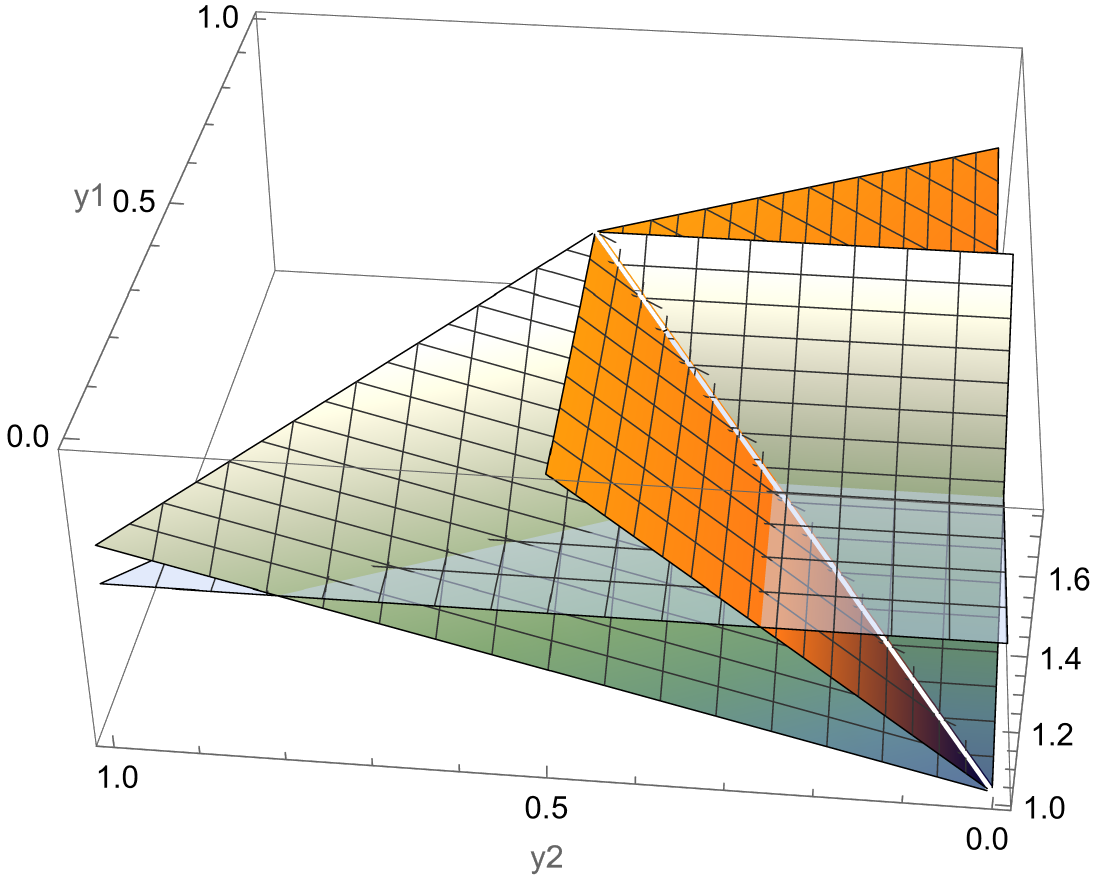}
\includegraphics[scale=0.28]{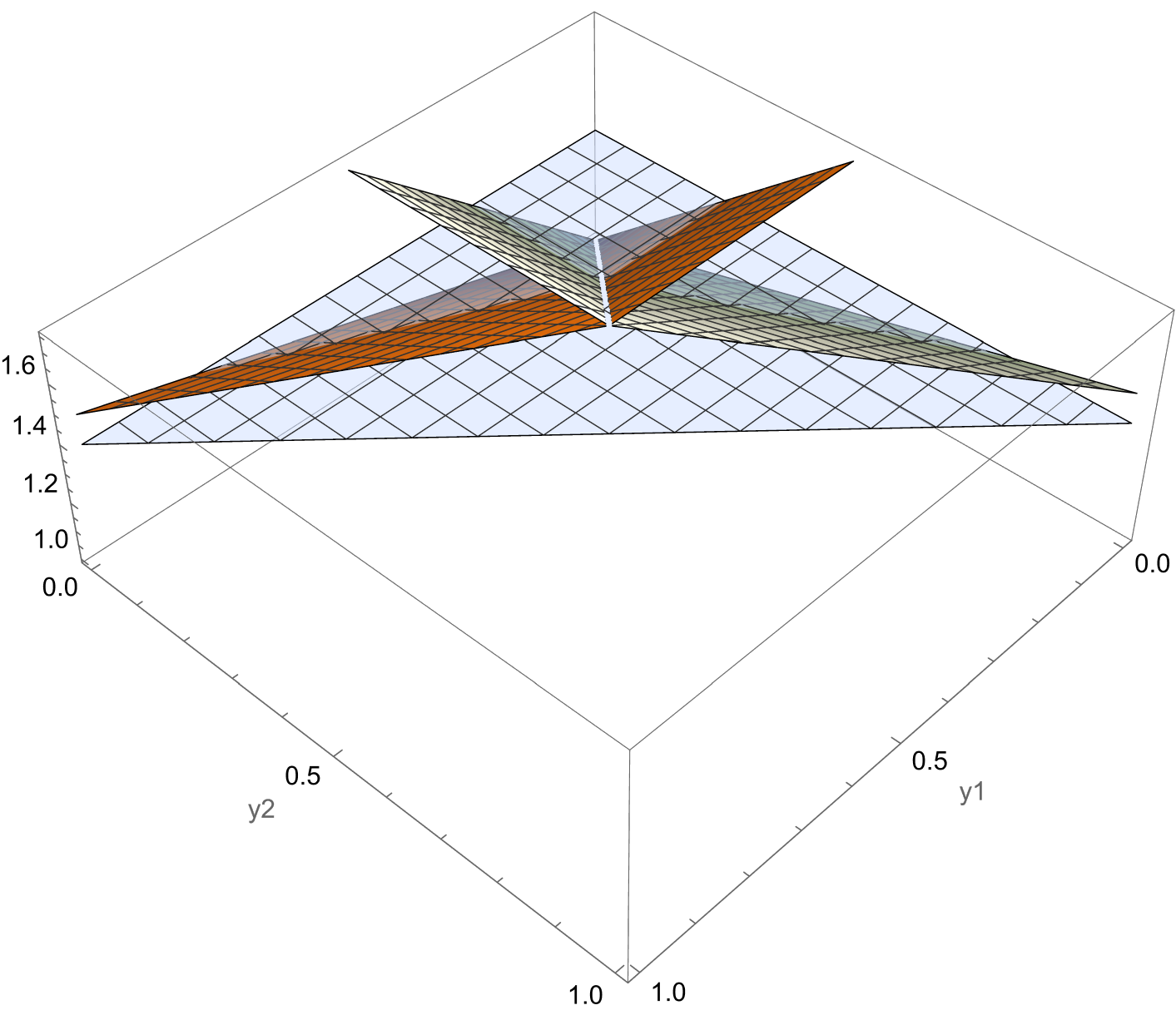}
\caption{Boundary functions for {Player 1 (green) and Player 2 (orange), shown from two different perspectives. Here} $k=c=\rho=\mu=\theta=1$, $\beta=1/2$. The waiting and installation regions are, respectively, those below and above the surfaces. The blue horizontal plane highlights a level set of the boundary functions (cf. Figure \ref{fig:onestep_1}).} \label{fig:onestep_4}
\end{figure}
 
Finally, let us {represent the Nash equilibria described in cases i) through iv) above, and their respective value function, in terms of} the {boundary functions $F_1, F_2$} and the regions in \eqref{eq:onestep_wait}. First, let 
\begin{align}\label{eq:Ri}
R_i(x,y):=S_i(x,y,0,0)= y_{i} \frac{x\rho+\mu k - \b k \langle {\bf 1},y\rangle }{\rho (\rho+k)}, \qquad (x,y)\in \R\times \overline{D}, \qquad i=1,2,
\end{align}
be the expected profit associated to a non-installation strategy. Then it is clear that 
\begin{equation}\label{eq:profit_onestep2}
S_i(x,y,I)=R_i(x,y+I).
\end{equation}
Next, we set ${F}^{-1}:[F(0,0),\infty)\to {[0,\theta/2]}$ as
\begin{equation}\label{eq:invF}
{F}^{-1}(x):= \begin{cases}
\frac{2}{3}A(x) &\quad \text{if}\quad x\in [{F}(0,0),{F}(\theta/2,\theta/2)],\\ 
\frac{\theta}{2} & \quad \text{if}\quad x> F(\theta/2,\theta/2 ).
\end{cases}
\end{equation}
{In other words, ${F}^{-1}$ is the inverse of the function $[\eta\mapsto F(\eta,\eta)]$ (which is injective in light of Remark \ref{rem:increasing_F}) on $[F(0,0),F(\theta/2,\theta/2)]$, and it is constantly equal to $\theta/2$ on $(F(\theta/2,\theta/2),+\infty)$.}
For any $\xi\in [0,\theta]$, also set
${F}_{\xi}^{-1}:[F(\xi,0),\infty)\to [0,(\theta-\xi)\wedge \xi]$ as
\begin{equation}\label{eq:invFi}
{F}_{\xi}^{-1}(x):= \begin{cases}
A(x)-\frac{\xi}{2} &\quad \text{if}\quad x\in [{F}(\xi,0),{F}(\xi,{(\theta-\xi)\wedge \xi})],\\ 
(\theta-\xi)\wedge \xi  & \quad \text{if}\quad x> {F}({\xi,(\theta-\xi)\wedge \xi})  .
\end{cases}
\end{equation}
In other words, ${F}_{\xi}^{-1}$ is the inverse of the function $\big[\eta\mapsto F(\xi,\eta) =F_2(\xi,\eta) =F_1(\eta,\xi)\big]$ (which is injective in light of Remark \ref{rem:increasing_F}) on $[F(\xi,0),F({\xi,(\theta-\xi)\wedge \xi})]$, and it is constantly equal to ${(\theta-\xi)\wedge \xi}$ on $(F({\xi,(\theta-\xi)\wedge \xi}),+\infty)$.

With these notations at hand, we can characterize the equilibrium strategy as follows: 
if {Player $1$} is in its installation region and {Player $2$} is not, then Player $1$ makes an installation of {size $I^1={F}^{-1}_{y_2}(x)-y_1$}, pushing the initial state of the system to the boundary of $\mathbb{W}^1$ (therefore of $\mathbb{W}^1\cap \mathbb{W}^2$), constant in the direction $(x, \, y_2)$. {The strategies are symmetric when Player 2 is in its installation region and Player 1 is not}. If {Players $1$ and $2$} are both in their installation region, they both make an installation $I={F}^{-1}(x)-y$, again pushing the state on the boundary of {$\mathbb{W}^1\cap \mathbb{W}^2$. Note that both players reach the level ${F}^{-1}(x)$ after a joint installation.} {To sum up:
\begin{equation}\label{eq:eq_strat_one_s}
I=\begin{cases}
(0,0), & (x,y)\in \mathbb{W}^1\cap \mathbb{W}^2,\\
(0, {F}^{-1}_{y_1}(x)-y_2), & (x,y)\in \mathbb{W}^1\cap \mathbb{I}^2, \\
({F}^{-1}_{y_2}(x)-y_1 , 0), & (x,y)\in \mathbb{I}^1\cap \mathbb{W}^2, \\
({F}^{-1}(x)-y_1, {F}^{-1}(x)-y_2), & (x,y)\in \mathbb{I}^1\cap \mathbb{I}^2.
\end{cases}
\end{equation}
}
Therefore, recalling Equation \eqref{eq:profit_onestep2}, {the value function of Player 1 is 
\begin{align}
V_1(x,y) = \mathcal{S}_1(x,y,I)=\begin{cases}
R_1(x,y), & (x,y)\in \mathbb{W}^1\cap \mathbb{W}^2,\\
R_1(x,y_1, {F}_{y_1}^{-1}(x) ), & (x,y)\in \mathbb{W}^1\cap \mathbb{I}^2, \\
R_1(x,{F}_{y_2}^{-1}(x) , y_2) - c\, ({F}_{y_2}^{-1}(x) - y_1), & (x,y)\in \mathbb{I}^1\cap \mathbb{W}^2, \\
R_1(x,{F}^{-1}(x),{F}^{-1}(x))-c\,({F}^{-1}(x)-y_1), & (x,y)\in \mathbb{I}^1\cap \mathbb{I}^2.
\end{cases}\label{eq:value_onestep}
\end{align}
{By symmetry, the value function for Player 2 is $V_2(x,y_1,y_2) =V_1(x,y_2,y_1) $.}

\subsection{Comparison with Pareto optima} 
The Pareto optimum is an installation strategy which maximizes the aggregate expected profit 
$$\sum_{i=1,2}\mathcal{S}_i(x,y_1,y_2,I_1,I_2)$$
{over all the $I=(I^1,I^2)\in \R^2$ satisfying the constraints \eqref{eq:static_admissible}.}
Again, this quantity can be computed exactly: setting $\bar y=y_1+y_2$ and $\bar I=I_1+I_2$ we have 
\begin{align*}
\sum_{i=1,2}\mathcal{S}_i(x,y_1,y_2,I_1,I_2)
&=(\bar y +\bar I){\Eb}\left[\int_0^\infty e^{-\rho \tau}{X}^{x,y,I}(\tau)d\tau\right]-c\bar I\\
&=(\bar y +\bar I)\frac{x\rho+\mu k-\beta k (\bar y+\bar I)}{\rho (\rho+k)}-c\bar I.
\end{align*}
Optimizing w.r.t the cumulative installation $\bar I {\in [0,\theta - \bar y]}$ we obtain 
\begin{equation}
\bar I=\min\{A(x)-\bar y, \theta-\bar y\}\vee 0, 
\end{equation}
where $A(x)$ is as in \eqref{A(x)}. Clearly, depending on the proportion in which the two players install, there are infinitely many Pareto optima. 
However, even in the case $y_1=y_2$ and assuming $I_1=I_2$, the Pareto optimum is fundamentally different from the Nash equilibrium: for instance it is easy to see that for the former, the system reaches saturation for $A(x)\geq \theta$, while for the latter the system reaches saturation earlier, when $A(x)\geq \frac{3}{4}\theta$ {(cf. previous section)}. 

\begin{figure}[!h]\centering
\includegraphics[scale=0.4]{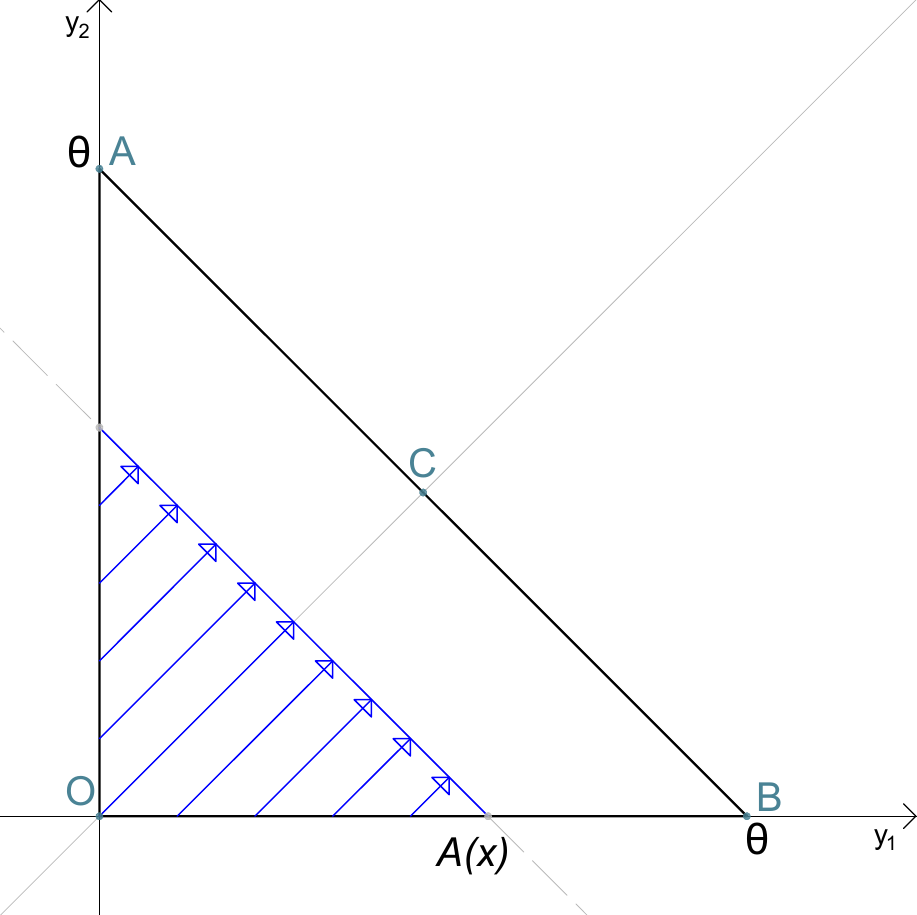} $\quad$
\includegraphics[scale=0.44]{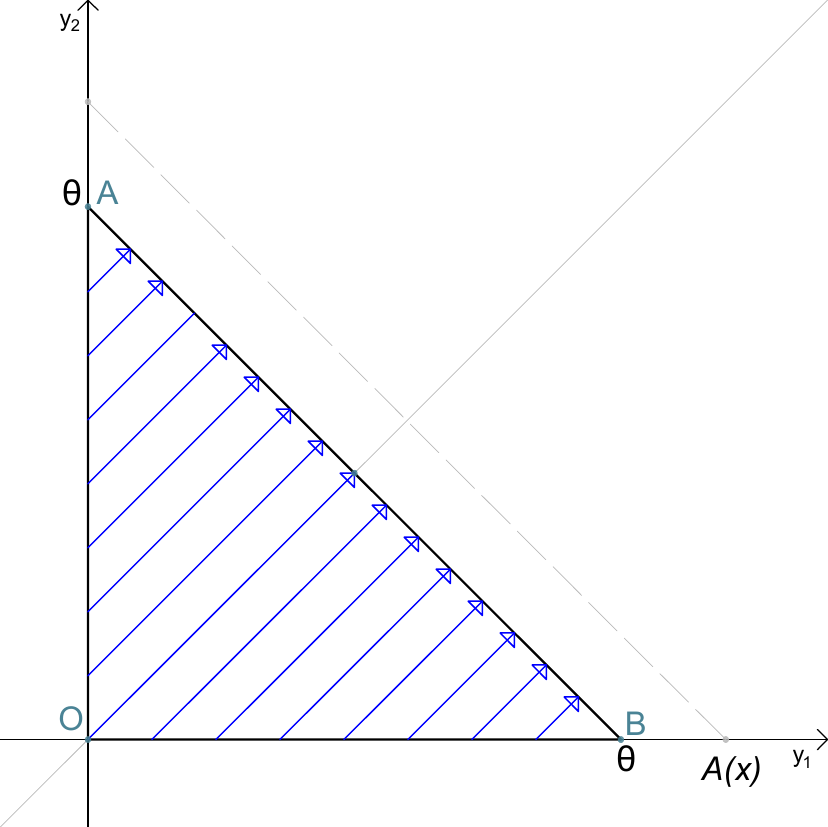}
\caption{A possible Pareto optimum{: without saturation (left) and with saturation (right).}} 
\end{figure}

\subsection{Final remarks and ansatz for the general game}\label{sec:remarks}
Similarly to the one step case, a Nash equilibrium for the general game will keep the state process $(X^{x,y,I}_t,Y^{y,I}_t)$ in the closure of $\mathbb{W}^1\cap \mathbb{W}^2$ by a possible lump installation at time $0$, and successive infinitesimal installations for $t>0$. With this in mind, the previous computations provide a good intuition on the structure of the regions $\mathbb{W}^i$ and $\mathbb{I}^i$, $i=1,2$. On one hand, the cross structure of the boundaries, as well as their monotonicity can be heuristically explained by the principle that the firm which has the least installed power should install before the other. Then, it would be also true that the joint installation region should be a square for every fixed $x$, as seen in Figure \ref{fig:onestep_1}. Indeed, if that was not the case, then there would be states of the system which are transported to an installation region by any admissible control (see Figure \ref{fig:onestep_5}){, which is a contradiction in terms.} This provides a good motivation for assuming the \emph{$F$-structure} for the waiting and installation regions, which arises explicitly in the static game (cf. Section \ref{sec:free_bound_static}), to be in place also in the dynamic game.

\begin{figure}[!h]
\centering
\includegraphics[scale=0.5]{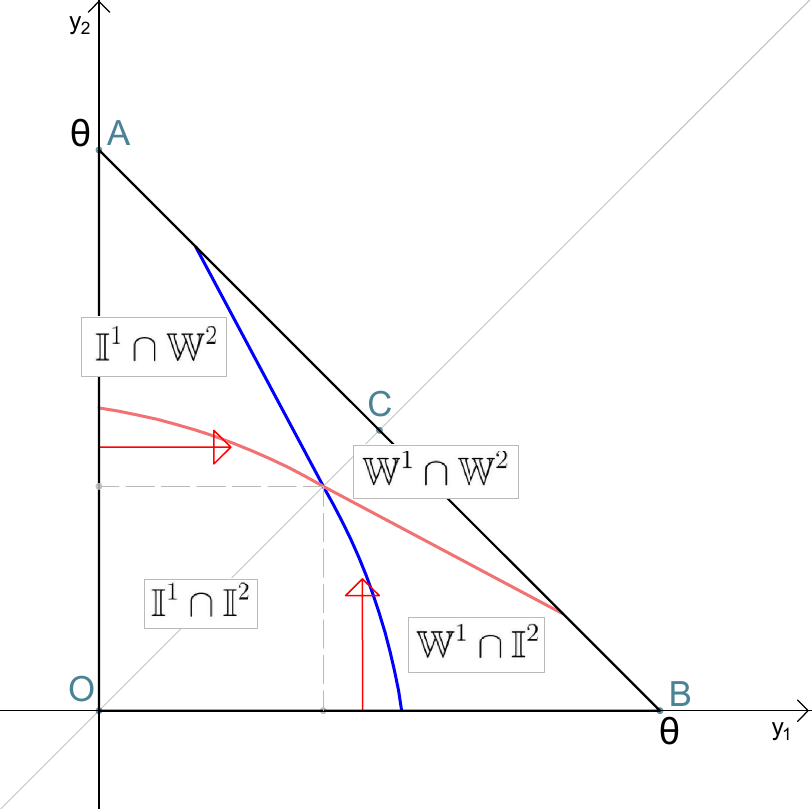}$\quad$
\includegraphics[scale=0.38]{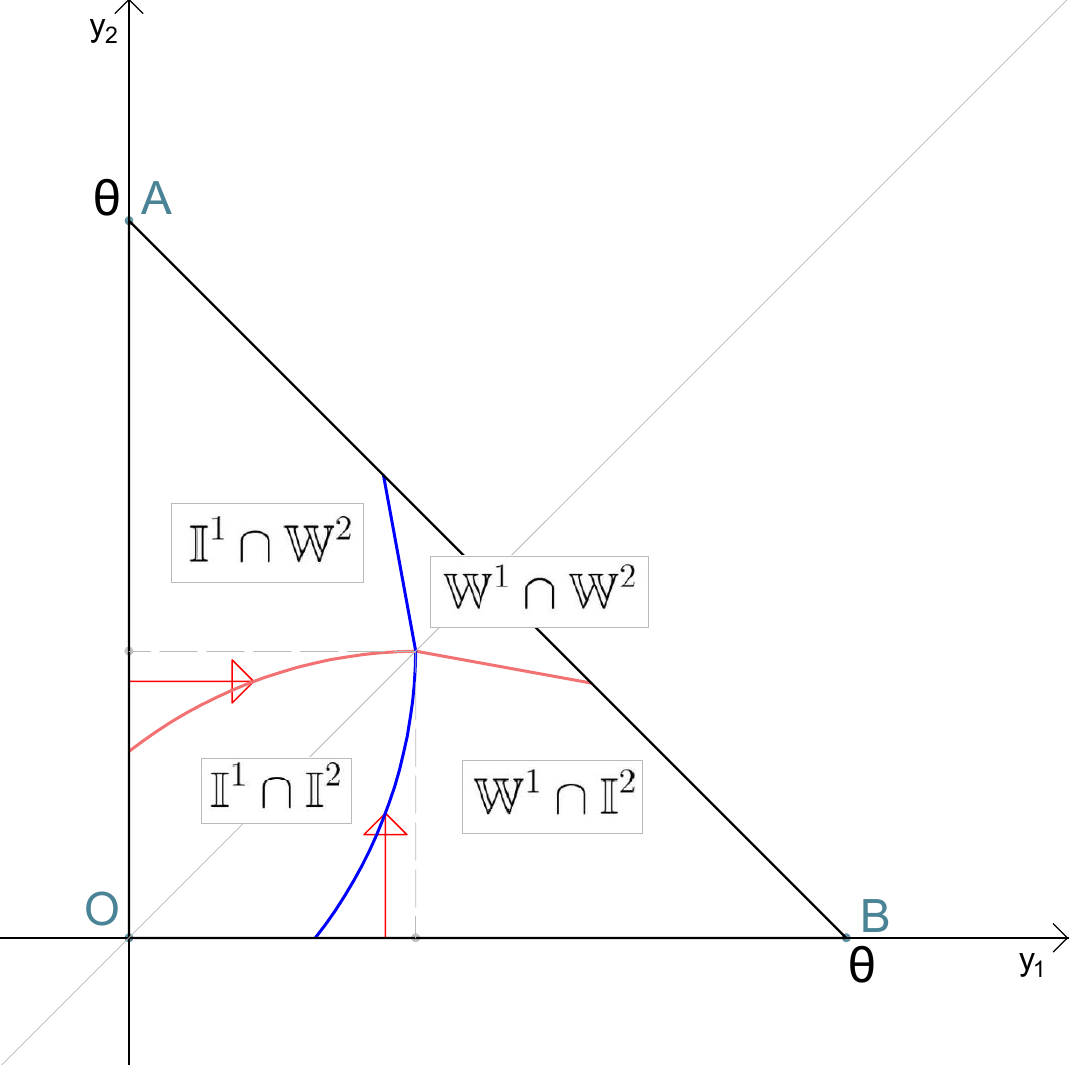}
\caption{Two non-admissible boundaries: {a joint installation leads to an individual installation region (left), and an individual installation leads to a joint installation region (right). }}\label{fig:onestep_5}
\end{figure}

\newpage

\section{Continuous game}

\subsection{HJB free-boundary system and verification theorem}\label{sec:ver}

Let us now go back to the original {dynamic game introduced in Section \ref{sec:game}}. In this section we prove a verification theorem, which characterizes the equilibrium solution of the game. 

{Denoting by ${\bf 0}$ the admissible control $I_t \equiv (0,0)$, let}
$\cL$ be the infinitesimal generator of the diffusion $(X_t^{x,y,{\bf 0}})_{t\geq 0}$, which is given by the second order operator
\begin{equation}\label{eq:infinitesimal_generator}
\cL f=\cL f(x,y):=\frac{1}{2} \sigma^2 \p_{xx}f(x,y)+k(\mu - \beta \langle {\bf 1},y\rangle -x)\p_xf(x,y),
\end{equation}
for $f(\cdot,y)\in C^2$, $y\in\overline{D}$, and let $R_i(x,y)$ be the function defined in \eqref{eq:Ri}, which corresponds to the expected profit of firm $i$, $i=1,2$,  when both firms follow a non-installation strategy, namely
\begin{equation}
R_i(x,y) = S_i(x,y,{\bf 0}), \qquad (x,y)\in \R\times \overline{D}, \qquad i=1,2.
\end{equation}
\paragraph{Heuristics.} Let $\{i,j\}$ be a permutation of $\{1,2\}$ and assume that Player $j$ installs optimally. Following the typical structure of singular control optima, the pair $(X_t, Y_t)$ is expected to lie inside the closure of a waiting domain $\mathbb{W}^j \subset \R \times D$, at any positive time. In particular, being $\mathbb{W}^j$ the domain where Player $j$ is idle, the differential of $Y^j_t$ is expected to be zero whenever $(X_t, Y_t) \in \mathbb{W}^j$. Thus, following the same arguments as in \cite{MR4305783}, the value function of Player $i$ must satisfy, on $\mathbb{W}^j$, the same variational inequality as in the single-player case, namely
\begin{equation}\label{HJB_bis}
\max\{(\cL-\rho)V_i(x,y)+xy_i,\, \p_{y_i}V_i(x,y)-c\}=0, \qquad(x,y)\in \mathbb{W}^j. 
\end{equation}
On the other hand, when $(X_t, Y_t) \notin \mathbb{W}^j$, one should expect that the application of It\^o formula to $V_i (X_t, Y_t)$ will yield an a-priori non negligible contribution of type $\partial_{y_j} V_i (X_t, Y_t) dI^{j}_t$. In order to prove a verification theorem, this quantity must be killed by the condition
\begin{equation}\label{HJB_ter}
\p_{y_j}V_i(x,y) = 0, \qquad (x,y)\in \mathbb{I}^j:= (\R\times D) \setminus \mathbb{W}^{j}.
\end{equation}
Finally, the following boundary condition should clearly hold true:
\begin{equation}\label{eq:terminal_cond}
V_i(x,y)=R_i(x,y), \qquad {x\in\R, \quad y\in \overline{D} \text{ s.t. } y_1 + y_2 = \theta.} 
\end{equation} 
Note that these equations must be satisfied for both the choices $(i,j) = (1,2)$ and $(i,j) = (2,1)$. 

\bigskip

Unfortunately, the chances of proving a verification theorem for the system given by \eqref{HJB_bis}-\eqref{HJB_ter}-\eqref{eq:terminal_cond} seem scarce unless one requires some additional structure for the solutions. Motivated by the remarks in Section \ref{sec:remarks}, we then impose that the regions $\mathbb{W}^i$ and $\mathbb{I}^i$, $i=1,2$, are given according to the \emph{$F$-structure} that we explicitely described in the one-step game at Section \ref{sec:free_bound_static}. 
\begin{definition}[Admissible boundary]\label{def:admissible_boundary}
A real-valued function ${F=F(y_1,y_2)}$ defined on {$\overline{D}\cap \{ y_1\leq y_2 \}$ (or on $\overline{D}\cap \{ y_2\leq y_1 \}$)} is called a {\emph{$1$-admissible (or $2$-admissible) boundary}} if it is continuous and strictly increasing in both single variables.
\end{definition}
We now define the waiting and installation regions associated to a $j$-admissible boundary:
\begin{notation}\label{not:barriers}
Let $\{i,j\}$ be a permutation of $\{1,2\}$. For any $j$-admissible boundary $F$, we set
\begin{align}
\p F&:=\{(x,y)\in\R\times {\overline{D}} : y_i\geq y_j, x = F(y)  \},
\intertext{which corresponds to the graph of the function $F$. Moreover we set}
W^{\text{free}}_F&:=\{ (x,y)\in\R\times D : y_i\geq y_j, x < F(y)  \}, \\
W^{\text{prol}}_F&:=\{ (x,y)\in\R\times D : y_i\leq y_j, y_j < \theta/2, x < F(y_j,y_j)  \},\\
W^{\text{sat}}_F&:=\{ (x,y)\in\R\times D :  y_j \geq \theta/2 \},\\
W_F &:= W^{\text{free}}_F \cup W^{\text{prol}}_F \cup W^{\text{sat}}_F , \\
I_F &:= (\R\times D) \setminus W_F .
\end{align} 
\end{notation}
Here the interpretation of $W^{\text{free}}_F$, $W^{\text{prol}}_F$ and $W^{\text{sat}}_F$ is analogous to that in Remark \ref{rem:interpretation_W}.
For a given permutation $\{i,j\}$ of the indices $\{1,2\}$, we now introduce the notion of regular solution to the variational problem associated to a $j$-admissible boundary $F$, namely 
\begin{equation}\label{HJB_quat}
\begin{cases}
\max\{(\cL-\rho)V(x,y)+xy_i,\, \p_{y_i}V(x,y)-c\}=0, &\quad(x,y)\in W_F \\
\p_{y_j}V(x,y) = 0, &\quad (x,y)\in I_F
\end{cases}, 
\end{equation}
together with the boundary condition  
\begin{equation}\label{eq:terminal_cond_bis}
V(x,y)=R_i(x,y), \qquad {x\in\R, \quad y\in \overline{D} \text{ s.t. } y_1 + y_2 = \theta.} 
\end{equation}
The idea behind the following definition is that the variational inequality on $W_F$ splits the latter set in two regions, divided by a free boundary that is determined by an $i$-admissible boundary $G$.

\begin{definition}[Regular solution to the variational problem]\label{def:regular_sol_VP}
Let $\{i,j\}$ be a permutation of $\{1,2\}$ and $F$ be a $j$-admissible boundary. A pair $(V,G)$ is a \emph{regular solution to  $\text{VP}_{\{i,j\}}(F)$} if $G$ is an $i$-admissible boundary and $V:\R \times \overline{D} \to \R$ is a continuous function satisfying:
\begin{itemize}
\item[(i)] [Regularity w.r.t. $x$] $\p_{x}V \in C(\R\times D)$ and $\p_{xx}V\in C(\overline{W_F})$. Furthermore,
for any $R>0$, there exists $L_R>0$ such that
\begin{equation}\label{eq:Lip_x}
\left|(\p_xV)(x,y)-(\p_xV)(x',y)\right| \le L_R |x-x'|,\qquad x,x'\in [-R,R], \quad y\in {{D}}.
\end{equation}
 \item[(ii)] [Regularity w.r.t. $y$] $\p_{y_i} V \in C(\overline{W_F})$ {and $\p_{y_j} V$ is continuous on a lower open neighbourhood of ${I_{F}}$. 
More precisely, there exists $r>0$ such that $\p_{y_j} V\in C(U_r)$, with $U_r := \{(x,y) \in \R^3 : (x+r,y)\in I_F\}$}.
\item[(iii)] [Solution to HJB equation \eqref{HJB_quat}] The following relations are satisfied:
\begin{align}\label{eq:regular_sol}
(\cL-\rho)V(x,y)+x y_i =0,\ \  \p_{y_i}V(x,y)-c \leq 0,&\qquad (x,y)\in W_F \cap W_G 
 ,\\ \label{eq:regular_sol_bis}
(\cL-\rho)V(x,y)+x y_i \leq 0, \ \ \p_{y_i}V(x,y)-c = 0,&\qquad (x,y)\in  W_F \cap I_G, \\
\p_{y_j}V(x,y) = 0,&\qquad (x,y)\in  I_F. \label{eq:regular_sol_ter}
\end{align}
In particular, \eqref{HJB_quat} is satisfied pointwise. 
\item[(iv)][(Boundary condition)] Condition \eqref{eq:terminal_cond_bis} is satisfied. It follows, from a direct computation, that $\p_{xx}V\in  C (\R \times \{ y_1 + y_2 = \theta \}\cap \overline{D})$ and, for any $y\in \overline{D}$ with $y_1 + y_2 = \theta$, we have
\begin{equation}
(\cL-\rho)V(x,y)+x y_1 =0, \qquad x\in\R.
\end{equation}
\item[(v)] [Sublinear growth] There exists a constant $K>0$ such that
\begin{equation}\label{eq:linear_growth_V}
|V(x,y)| \leq K (1+|x|), \qquad (x,y)\in \R\times \overline{D}.
\end{equation}
\end{itemize}
We call $G$ a \emph{free-boundary function for $\text{VP}_{\{i,j\}}(F)$}.
\end{definition}

\begin{remark}
In the single player control setting \cite{MR4305783} the second order derivative with respect to the $x$-variable is continuous everywhere {on $\R$}. However, it is common to observe a loss in regularity when passing from a control setting to the associated games involving free boundaries: to this extent, see e.g.  \cite{chen2013nonzero}, which finds a solution for an optimal stopping game by assuming the value functions to be only continuous at the free boundary (instead of the $C^1$ smoothness of the classical verification theorem contained in 
), or the discussion in \cite{MR4066995} highlighting exactly the same phenomenon in the context of impulsive games. Similarly, in optimal {stopping problems} it is generally difficult to prove that the value function is regular up to the free boundary, see the discussion and examples in \cite{MR4620174}.
\end{remark}

We are now in the position to give a rigorous interpretation of the the variational system \eqref{HJB_bis}-\eqref{HJB_ter}-\eqref{eq:terminal_cond} obtained before with heuristic arguments. The main idea is that the free-boundary $G$ in the regular solution to the  $\text{VP}_{\{i,j\}}(F)$ should coincide with the reflection (see definition below) of $F$ itself.

Precisely, we introduce two reflection operators. Let ${\bf r}: \R^2 \to \R^2$ be given by
\begin{equation}
{\bf r}(y_1,y_2) = (y_2,y_1),
\end{equation} 
and its extension $\bar{\bf r}: \R \times \R^2 \to  \R \times \R^2$ be given by 
\begin{equation}
\bar{\bf r} (x,y) =  \big(x,{\bf r}(y)\big).
\end{equation}
Let also 
$\cR: C(D) \to C(D)$ and $\bar\cR: C(\R\times D) \to C(\R\times D)$ be acting, respectively, as 
\begin{equation}
\cR(f) = f \circ {\bf r},\qquad \bar\cR(f) = f \circ \bar{\bf r}.
\end{equation}
\begin{remark}
Let $\{i,j\}$ be a permutation of $\{1,2\}$. 
If $F$ is a $j$-admissible boundary, then $\cR(F)$ is an $i$-admissible boundary and 
\begin{equation}
W^{\text{free}}_{\cR(F)} = \bar{\bf r}(W^{\text{free}}_{F}), \quad W^{\text{prol}}_{\cR(F)} = \bar {\bf r}(W^{\text{prol}}_{F}), \quad W^{\text{sat}}_{\cR(F)} = \bar {\bf r}(W^{\text{sat}}_{F}), \quad W_{\cR(F)} = \bar {\bf r}(W_{F}),\quad I_{\cR(F)} = \bar {\bf r}(I_{F}). 
\end{equation}
\end{remark}

\begin{definition}[Equilibrium solution to the variational problem]\label{def:equilibrium_sol}
We say that a pair $(V,F)$ is an {\em equilibrium solution} to VP if $F$ is a $2$-admissible boundary and $(V,\cR (F))$ is a regular solution to the variational problem $\text{VP}_{\{1,2\}}(F)$.
\end{definition}
\begin{remark}[Symmetry]\label{rem:equilibrium_sol}
It is straightforward to see that $(V,F)$ is an equilibrium solution to VP if and only if $\cR (F)$ is a $1$-admissible boundary and $(\bar\cR (V),F)$ is a regular solution to the variational problem $\text{VP}_{\{2,1\}}(F)$.
\end{remark}

\begin{notation}\label{notation_V_F}
If $(V,F)$ is a fixed equilibrium solution to VP, we set
\begin{align}
&F_2:= F, \quad V_1:= V,&& F_1:= \cR (F), \quad V_2:= \bar\cR(V),\\
&\Wb^2:= W_{F_2} , \quad \Ib^2:= I_{F_2}, && \Wb^1:= W_{F_1}, \quad \Ib^1:= I_{F_1}.
\end{align}
\end{notation}
\begin{definition}[$j$-optimal control]\label{def:i_optimal_control}
Let $j\in\{ 1,2 \}$, $F$ a $j$-admissible boundary and $(x,y)\in \R\times {D}$. A control $I\in \cI(y)$ is called \emph{$j$-optimal with respect to $F$ and $(x,y)$} if:
\begin{itemize}
\item[(i)] both the following conditions hold for {any $t\geq 0$}: 
\begin{align}
&{\Pb\Big( (X^{x,y,I}_t,Y^{y,I}_t) \notin \overline{W_F} , Y^{y,I}_t \in D  \Big) = 0 },
\label{eq:cond_a_i_optimal}\\
&(X^{x,y,I}_t, Y^{y,I}_t) \in W_F \Rightarrow {d Y^{y,I,j}_{t+}} = 0.
\label{eq:cond_b_i_optimal} 
\end{align}
\item[(ii)] $I^j$ is {$P$-a.s.} continuous {on $[0,+\infty)$}, with
\begin{equation}\label{eq:cond_c_i_optimal}
I^j_0 = \min\{ s\geq 0 : (x,y+s \, e_j)  \in \overline{W_F} \},
\end{equation}
with a possible jump from $0^-$ to $0$.
\end{itemize}
\end{definition}
\begin{remark}In other words, given a $j$-admissible boundary $F$, a $j$-optimal control keeps the state $(X^{x,y,I}_t,Y^{y,I}_t)$ inside the closure of the waiting region of player $j$, $\overline{W_F}$, with probability $1$ for any $t\geq 0$ with the least effort. In particular, if $(X^{x,y,I}_{0^-},Y^{y,I}_{0^-})\notin \overline{W_F}$, then $\Delta I^j_{0^-}=I^j_0-I^j_{0^-}>0$ so that it can be (only) discontinuous at time $0^-$. On the other hand, if the state is in $W_F$ at some $t\in [0^-,\infty)$, then $I^j_t=0$.
\end{remark}
The following Verification theorem states that a Nash equilibrium is a strategy associated to a control whose components are optimal w.r.t. the boundaries of both players, identified by an equilibrium solution to VP. Thus it will keep the state inside the intersection of the waiting regions of both players, for any $t\geq 0$.

\begin{theorem}[Verification theorem]\label{th:verific_th}
Let $(V,F)$ be an equilibrium solution to VP. Let also $\contr^{\star}\in\mathcal{A}$ such that, for any $j=1,2$ 
 and $(\bar{x},\bar{y})\in \R\times {D}$, every control process $I\in\mathcal{I}(y)$ such that 
\begin{equation}\label{eq:cond_Iit}
I^j_t := \contr_t^{\star,j}\big((X^{\bar x,\bar y,I}_s)_{s\leq t}, (Y^{\bar y,I}_{s} )_{s\leq t}\big), \qquad t\geq 0, 
\end{equation}
is $j$-optimal w.r.t. ${F^j}$ and $(\bar{x},\bar{y})$.

Then $\contr^{\star}$ is a Nash equilibrium and 
\begin{equation}\label{eq:nash_cond_th}
\cS_i(\bar x,\bar y; I^{\star}) = V_i(\bar x,\bar y) , \qquad (\bar x,\bar y)\in \R\times D,\quad  i = 1,2,
\end{equation}
where $I^{\star}\in\mathcal{I}(\bar y)$ is the unique admissible control such that 
\begin{equation}\label{eq:I_markov_star}
I^{\star}_t = \contr_t^{\star}\big((X^{\bar x,\bar y,I^{\star}}_s)_{s\leq t}, (Y^{\bar y,I^{\star}}_{s})_{s\leq t}\big), \qquad t\geq 0, 
\end{equation}
in agreement with Definition \ref{def:markov_strategy}.
\end{theorem}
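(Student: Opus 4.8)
The plan is to fix $i\in\{1,2\}$, let $j$ be the other index, and compare the payoff $\cS_i(\bar x,\bar y;I)$ obtained under $I = I^\star$ with that obtained under an arbitrary deviation $I'$ of player $i$ that keeps player $j$'s feedback control $\contr^{\star,j}$ fixed. By the symmetry statement in Remark~\ref{rem:equilibrium_sol} and Notation~\ref{notation_V_F}, $V_i$ is (up to the reflection $\bar\cR$) a general solution to $\text{VP}_{\{i,j\}}(F_j)$, so the first line of \eqref{HJB_bis} holds weakly in $x$ on $\Wb^j$ with the split \eqref{eq:regular_sol} across the free boundary $\p G$, and the second line $\p_{y_j}V_i = 0$ holds on $\Ib^j$. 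The core of the argument is to apply a weak It\^o formula (the one announced in the introduction and, I assume, proved just before this theorem) to the process $t\mapsto e^{-\rho t} V_i(X^{\bar x,\bar y,I'}_t, Y^{\bar y,I'}_t)$; the Lipschitz-in-$x$ bound on $\p_x V_i$ from Definition~\ref{def:general_sol_VP}(i) together with the strict monotonicity of $F_j$ (so that the boundary $\p F_j$ is crossed transversally and $\p G$ likewise) guarantees no local-time term appears, even though $\p_{xx}V_i$ fails to exist on $\p G$.

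\textbf{Key steps, in order.} First, decompose $dI'_t = dI'^{c}_t + \Delta I'_t$ into continuous and jump parts and write the It\^o expansion: the $(\cL-\rho)V_i + xy_i$ term integrated against $dt$, the $\p_{y_i}V_i\, dI'^{i,c}_t$ and $\p_{y_j}V_i\, dI'^{j,c}_t$ terms against the continuous controls, the jump term $\sum V_i(\,\cdot + \Delta I') - V_i(\cdot)$, plus a martingale term $\int \p_x V_i\,\sigma\,dW$. Second, bound each piece using the variational inequalities: on $\Wb^j$ one has $(\cL-\rho)V_i + xy_i \le 0$ and $\p_{y_i}V_i - c\le 0$ everywhere (from \eqref{eq:regular_sol}, combining the two regions $W^G_{F_j}$ and $I^G_{F_j}$), while $\p_{y_j}V_i = 0$ on $\Ib^j$; since $I'$ is admissible, player $j$'s control $I'^j = \contr^{\star,j}(\cdots)$ is $j$-optimal w.r.t.\ $F_j$, hence condition \eqref{eq:cond_b_i_optimal} forces $dY^{j}_{t+}=0$ while the state is in $W_{F_j}$, so the only place $\p_{y_j}V_i\,dI'^{j}$ could be nonzero is where $\p_{y_j}V_i = 0$ anyway; and the jump term is controlled by a telescoping/fundamental-theorem-of-calculus estimate against $\p_{y_i}V_i \le c$ and $\p_{y_j}V_i$, yielding $V_i(\cdot+\Delta I') - V_i(\cdot) \le c\,\Delta I'^i$. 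Third, take expectations (the martingale term vanishes by the linear growth bound in Definition~\ref{def:general_sol_VP}(iii) and standard localization, using that $X$ is an OU process with all moments), let the time horizon go to infinity (again linear growth plus $e^{-\rho t}$ kills the boundary term), and conclude $V_i(\bar x,\bar y) \ge \cS_i(\bar x,\bar y; I')$. Fourth, show equality holds for $I' = I^\star$: here $j$-optimality of $I^{\star,i}$ itself, via \eqref{eq:cond_a_i_optimal}, keeps the state in $\overline{W_{F_i}}$ (the closure of player $i$'s waiting region) where $\p_{y_i}V_i = c$ exactly on the installation part, $(\cL-\rho)V_i+xy_i = 0$ on the waiting part, and the jump at time $0$ dictated by \eqref{eq:cond_c_i_optimal} together with $\p_{y_i}V_i=c$ on $\Ib^i$ makes the initial-jump term exactly $-c\,I^{\star,i}_0$; hence all inequalities above are saturated and $\cS_i(\bar x,\bar y;I^\star) = V_i(\bar x,\bar y)$, which is \eqref{eq:nash_cond_th}. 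Finally, \eqref{eq:nash_equilibrium} follows since $\cS_i(\bar x,\bar y;I^\star) = V_i \ge \cS_i(\bar x,\bar y;I')$ for every admissible deviation $I'$ of player $i$, so $\contr^\star$ is a Nash equilibrium by Definition~\ref{def:nash_equilibrium}.

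\textbf{Main obstacle.} The delicate point is the weak It\^o formula and the absence of a local-time correction at the free boundary $\p G$, where $V_i$ is only $C^1$ in $x$ with Lipschitz gradient rather than $C^2$. I expect the argument to hinge on the fact that $\p G$ is the graph of an $i$-admissible boundary — continuous and \emph{strictly} increasing — so the time the reflected/controlled diffusion $X$ spends on $\p G$ has zero Lebesgue measure and the occupation-density (local time) of $X$ at the boundary integrates against the jump $[\p_x V_i]$ which is actually continuous (only the second derivative jumps), so the would-be correction term is zero. Making this rigorous is exactly the content of the "weak It\^o formula" promised in the introduction, and invoking it cleanly — with the right hypotheses matching Definition~\ref{def:general_sol_VP} — is where the real work sits; the rest is bookkeeping with the variational inequalities. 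A secondary subtlety is handling the simultaneous-installation states $\Ib^i\cap\Ib^j$ and the saturation region $W^{\text{sat}}$: one must check that the $j$-optimality conditions \eqref{eq:cond_a_i_optimal}--\eqref{eq:cond_c_i_optimal}, which encode the tie-breaking rule "never install above $\theta/2$", are consistent with keeping the state in $\overline{W_{F_i}}\cap\overline{W_{F_j}}$ so that both players' variational inequalities can be applied along the same trajectory.
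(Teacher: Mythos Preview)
Your proposal is correct and follows essentially the same approach as the paper: apply the weak It\^o formula (the paper's Lemma~\ref{lem:ito}, supported by Lemma~\ref{lem:boundary} to show the boundary $\p F\cup\p G$ is hit with zero probability at a.e.\ time) to $e^{-\rho t}V_i(X_t,Y_t)$, bound the $dt$, $dI^{i,c}$, $dI^{j,c}$, jump and martingale terms via the variational inequalities and $j$-optimality, localize/pass to the limit, and then saturate all inequalities for $I=I^\star$ using Lemma~\ref{lem:prelim_proof_ver}. The one place where the paper is more explicit than your sketch is the handling of the \emph{initial} jump at $t=0$, where both components may move: the paper splits $V_i(\bar x,\bar y+I_0)-V_i(\bar x,\bar y)$ by first moving $y_j$ (path entirely in $\Ib^j$ by \eqref{eq:cond_c_i_optimal}, so $\p_{y_j}V_i=0$ gives zero contribution) and then $y_i$ (path in $\overline{\Wb^j}$ since $F_j$ is increasing in $y_i$, so $\p_{y_i}V_i\le c$ applies) --- this specific ordering is what makes your ``telescoping'' step go through cleanly.
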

To prove Theorem \ref{th:verific_th} we need some preliminary results. Lemma \ref{lem:boundary} below guarantees that the pair $(X^{x,y,I}_t,Y^{y,I}_t)$ takes values, almost surely, on a set where the value function is smooth. This would be clear if the law of $(X^{x,y,I}_t,Y^{y,I}_t)$ were absolutely continuous with respect to the Lebesgue measure; however the marginal law of $Y^{y,I}_t$ is not, a priori, absolutely continuous, and no information is available for the joint law of the process. What we do exploit is the monotonicity of $F$ (see Definition \ref{def:admissible_boundary}) and of $Y^{y,I}$ (see Eq. \eqref{eq:def_strat_ammiss}) and the fact that $I$ is optimal with respect to $F$.
This allows us to prove an {\em ad hoc} It\^o formula, accounting for the lack of regularity in the proof of the verification theorem. 
\begin{lemma}\label{lem:boundary}
{Let $\{i,j\}$ be a permutation of $\{ 1,2 \}$, let $F$ be a $j$-admissible boundary, and $(\bar x, \bar y)\in \R\times {D}$.} 
If $I\in \cI(\bar y)$ is {$j$-optimal with respect to $F$ and $(\bar x, \bar y)$}, then we have
\begin{equation}\label{eq:boundary}
\Pb\left((X^{x,\bar y,I}_t,Y^{\bar y,I}_t)\in \p W_F , Y^{\bar y,I}_t \in D \cap \{y_2 < \theta/2 \} \right)=0, \qquad a.e. \quad t\geq 0.
\end{equation}
\end{lemma}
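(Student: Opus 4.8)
The plan is to fix a permutation $\{i,j\}$, fix $t \ge 0$, and show that the event $\{(X^{x,y,I}_t, Y^{y,I}_t) \in \p F \cup \p G\}$ has probability zero for a.e.\ $t$; by symmetry of the two boundaries it suffices to handle $\p F$ (the argument for $\p G$ is identical, reading $G$ for $F$ and $i$ for $j$). Recall that $\p F = \{(x,y)\in\R\times D : y_i \ge y_j,\ x = F(y)\}$. The key structural facts I will exploit are: (a) the control $I$ is $j$-optimal with respect to $F$, so by \eqref{eq:cond_b_i_optimal} the $j$-th component $Y^{y,I,j}$ does not move while the process is strictly inside $W_F$; (b) $F$ is a $j$-admissible boundary, hence continuous and \emph{strictly increasing} in both variables; and (c) $X^{x,y,I}$ is an It\^o process with a non-degenerate diffusion coefficient $\sigma$, so for each fixed $t$ its one-dimensional marginal, conditioned on $\mathcal{F}_s$ for $s<t$, is absolutely continuous.

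First I would set up a disintegration: write the event as
\begin{equation}
\{(X_t,Y_t)\in\p F\} = \{Y^{y,I,i}_t \ge Y^{y,I,j}_t\} \cap \{X^{x,y,I}_t = F(Y^{y,I}_t)\}.
\end{equation}
The idea is to condition on the whole trajectory of $Y^{y,I}$ up to time $t$ (equivalently, to freeze $Y^{y,I}_t$), and show that, given this information, the conditional probability that $X^{x,y,I}_t$ equals the single number $F(Y^{y,I}_t)$ is zero. Here I need a small amount of care because $Y^{y,I}$ is itself adapted to the same filtration and is not independent of $X^{x,y,I}$; the clean way is to argue that for each fixed $t$ and each $\eps>0$, conditionally on $\mathcal{F}_{(t-\eps)\vee 0}$, the increment $X^{x,y,I}_t - X^{x,y,I}_{(t-\eps)\vee 0}$ is a Gaussian-type random variable with strictly positive variance (the drift being locally bounded on the bounded region $\overline D$), while $F(Y^{y,I}_t) - X^{x,y,I}_{(t-\eps)\vee0}$ is \emph{not} yet determined — but this is exactly where the monotonicity plus optimality of $I$ enters. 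Using \eqref{eq:cond_a_i_optimal}–\eqref{eq:cond_b_i_optimal}, whenever the process is in $W_F$ the component $Y^{y,I,j}$ is frozen, and $Y^{y,I,i}$ can only increase, so $F(Y^{y,I}_t)$ is a monotone (in fact, on the relevant set, eventually constant or increasing in the driving time) functional of the $Y$-path that is coordinated with the times at which $X$ touches $\p F$; the essential point is that the set of times at which the reflecting-type control $I^i$ is active has Lebesgue measure zero in $\om$ whenever $X$ has a density, giving a Fubini argument.

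Concretely the cleanest route, and the one I would pursue, is a direct Fubini/occupation-time estimate: by Tonelli,
\begin{equation}
\int_0^\infty P\big((X^{x,y,I}_t,Y^{y,I}_t)\in\p F\big)\,dt = \Eb\left[\int_0^\infty \caratt_{\{X^{x,y,I}_t = F(Y^{y,I}_t),\ Y^{y,I,i}_t\ge Y^{y,I,j}_t\}}\,dt\right],
\end{equation}
so it suffices to show the inner (random) time-integral is zero a.s. On the set where the process sits in $W_F$, the $j$-component of $Y$ is constant, and $F(\cdot,\cdot)$ restricted along the curve $s\mapsto Y^{y,I}_s$ becomes a strictly increasing function of the (nondecreasing) component $Y^{y,I,i}_s$; meanwhile on any interval where $X^{x,y,I}$ is genuinely diffusing (diffusion coefficient $\sigma\neq 0$), the occupation measure of $\{t : X^{x,y,I}_t = \varphi(t)\}$ for a sufficiently regular deterministic-given-the-past curve $\varphi$ has zero Lebesgue measure — this is the classical fact that a one-dimensional It\^o process with nonzero volatility spends zero time at any fixed level, extended to a moving level via the monotonicity that prevents $\varphi$ from ``tracking'' $X$. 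The interval where the process is in $I_F$ (not $W_F$) is handled by the defining property of a $j$-optimal control: there $I^i$ reflects $X$ instantaneously back and the sojourn has zero duration, or is dealt with by \eqref{eq:cond_c_i_optimal} for the initial jump. I expect the \textbf{main obstacle} to be making rigorous the step ``$X^{x,y,I}$ spends zero time on the moving boundary'' without knowing the joint law of $(X,Y)$: the resolution is precisely to use that, by $j$-optimality and strict monotonicity of $F$, along the path the boundary value $F(Y^{y,I}_t)$ is a process of bounded variation (indeed monotone on the relevant excursions), so $X^{x,y,I}_t - F(Y^{y,I}_t)$ is again an It\^o process with the \emph{same} nonzero volatility $\sigma$, and a semimartingale with nondegenerate martingale part spends zero time at $0$ by the occupation-density (local time) formula — one shows $\int_0^\infty \caratt_{\{X_s - F(Y_s)=0\}}\,d\langle X\rangle_s = 0$ and divides by $\sigma^2>0$.
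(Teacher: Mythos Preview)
Your occupation-time argument is correct and genuinely different from the paper's route. You observe that $t\mapsto F(Y^{y,I}_t)$ is nondecreasing (since $F$ is increasing in both variables and $Y$ is componentwise nondecreasing), hence of bounded variation, so $Z_t := X^{x,y,I}_t - F(Y^{y,I}_t)$ is a semimartingale with continuous martingale part $\sigma W$; the occupation-time formula then gives $\sigma^2\int_0^t \caratt_{\{Z_s=0\}}\,ds = \int_\R \caratt_{\{a=0\}} L^a_t\,da = 0$, and the same reasoning handles $\p G$ with no appeal to optimality (your ``by symmetry'' is slightly imprecise, since the hypotheses are not symmetric in $F$ and $G$, but the local-time argument applies to both regardless). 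The paper instead removes the $Y$-dependent drift from $X$ by Girsanov, reducing $X$ to a standard Ornstein--Uhlenbeck process under an equivalent measure, and then uses the $j$-optimality of $I$ together with strict monotonicity of $F$ to show that $\{X_t=F(Y_t)\}\subset\{X_t=\sup_{s\le t}X_s\}$; the latter has probability zero because the argmax of an OU path on $[0,t]$ has an absolutely continuous law (inherited from Brownian motion by a second change of measure). The paper's argument gives $P=0$ for \emph{every} $t>0$, not merely a.e.\ $t$, and leans essentially on the reflecting-control structure; yours is model-agnostic and does not use optimality in the key step, but yields only the a.e.\ conclusion---which is exactly what the lemma states and all that Lemma~\ref{lem:ito} needs. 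Two small points to tidy: the conditioning discussion in the first half of your proposal is unnecessary scaffolding once you commit to the occupation-time route, and you should note that $F$ is only defined on $\overline D\cap\{y_i\ge y_j\}$, so the semimartingale $Z$ is to be read on the (random) time set where $Y$ lies in that domain---harmless, since the event $\{(X_t,Y_t)\in\p F\}$ already forces $Y^{y,I,i}_t\ge Y^{y,I,j}_t$.
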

\proof Let us assume $\{i,j\} = \{1,2\}$ without losing generality. To ease notation, we also drop the superscripts in $X^{\bar x,\bar y,I}_t$ and $Y^{\bar y,I}_t$. Setting
\begin{equation}
A_t := \big\{{(X_t,Y_t)\in  \p W_F, Y_t\in   D \cap \{y_2 < \theta/2 \} }\big\} ,
\end{equation}
notice that
\begin{equation}
A_t = \left\{{X_t= F(Y_t) , Y^2_t \leq Y^1_t, Y^{y,I}_t\in D }\right\} \cup  \left\{{X_t= F(Y^2_t, Y^2_t) , Y^1_t < Y^2_t < \theta/2 }\right\} .
\end{equation}
Thus we have
\begin{equation}\label{eq:inclusion}
A_t 
\subset \{ X_t = \sup_{s\in[0,t]} X_s \}.
\end{equation}
To show this, assume that $X_s > X_t$ for some $s<t$. Then, as $F$ is increasing in both variables and $I$ is $2$-optimal with respect to $F$ and $(\bar x, \bar y)$, $A_t$ implies that either $Y^{1}_s> Y^{1}_t$ or $Y^{2}_s> Y^{2}_t$, which violates $I\in \cI(\bar y)$.

Also, by Girsanov's theorem (see \cite[Theorem 10.5]{MR2791231}), there exists $\tilde\Pb \sim \Pb$ such that $X$ is a Brownian motion with respect to $\tilde\Pb$. In particular, we have $\tilde\Pb( X_t = \sup_{s\in[0,t]} X_s) = 0$ (see for instance the classical reference \cite[Chapter 2.8]{Karatzas}). This and \eqref{eq:inclusion} yield $\Pb(A_t)=0$. 
\endproof

\begin{lemma}[Weak It\^o formula]\label{lem:ito}
Let $\{i,j\}$ be a permutation of $\{ 1,2 \}$. Let $F$ be a $j$-admissible boundary and $V:\R\times \overline{D} \to \R$ a continuous function satisfying the regularity assumptions (i)-(ii)-{(iv)} in Definition \ref{def:regular_sol_VP}. Then, for any $(\bar x, \bar y)\in \R\times \overline{D}$, and any $I\in \cI(\bar y)$ {$j$-optimal with respect to $F$ and $(\bar x, \bar y)$}, we have
\begin{align}
 V(X^{\bar x,\bar y,I}_t, Y^{\bar y,I}_t) & = V({\bar x,\bar y}) + \int_{0}^t \cL V (X^{\bar{x},\bar{y},I}_s , Y^{\bar{y},I}_s ) ds + \sigma \int_0^t \partial_x V (X^{\bar{x},\bar{y},I}_s , Y^{\bar{y},I}_s )\, d W_s \\
& \quad + \int_0^t \big\langle \nabla_y V (X^{\bar{x},\bar{y},I}_s , Y^{\bar{y},I}_s ) , d I^c_s \big\rangle + \sum_{
{0\le s\le t}} \big( V (X^{\bar{x},\bar{y},I}_s , Y^{\bar{y},I}_s ) - V (X^{\bar{x},\bar{y},I}_s , Y^{\bar{y},I}_{s-} )  \big) , \qquad t\geq 0. \\
\label{eq:ito}
\end{align}
Here the equality is meant $\Pb$-almost surely, and the process $I^c$ denotes the continuous part of $I$.
\end{lemma}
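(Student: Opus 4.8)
The plan is to derive \eqref{eq:ito} by regularising $V$ in the price variable, applying the classical It\^o formula for (possibly discontinuous) semimartingales to the mollified function, and letting the regularisation parameter tend to $0$; the two structural ingredients are the $C^{1,1}$--bound \eqref{eq:Lip_x}, which rules out any local--time contribution, and Lemma \ref{lem:boundary}, which prevents the process from charging the set where $\p_{xx}V$ need not exist or be continuous. First I would record the semimartingale structure of $\big(X^{\bar x,\bar y,I},Y^{\bar y,I}\big)$: the price is a continuous It\^o process with continuous martingale part $\sigma W$; $Y^{\bar y,I}=\bar y+I$ has paths of finite variation, non--decreasing in each component; by \eqref{eq:cond_c_i_optimal} the component $I^j$ is continuous, so for $t>0$ the jumps of $Y^{\bar y,I}$ are in the $e_i$--direction only and $[X^{\bar x,\bar y,I},Y^{\bar y,I}]\equiv 0$. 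Then I would localise: applying Lemma \ref{lem:boundary} to both $F$ and $G$, together with the one--player Girsanov argument in its proof applied to $g:=F|_{\overline{OC}}$ --- which, via the monotonicity of $F$ and of $Y^{\bar y,I}$, controls the ``diagonal'' part $\{(x,y):y_i<y_j<\theta/2,\ x=g(y_j)\}$ of $\p W_F$ off which $I^j$ reflects the state in the prolongation regime --- one gets that, $P$--a.s., the set of $s\in[0,t]$ with $(X^{\bar x,\bar y,I}_s,Y^{\bar y,I}_s)\in\p F\cup\p G\cup\{x=g(y_j)\}$ is Lebesgue--null; combining this with \eqref{eq:cond_a_i_optimal} (so that the state stays in $\overline{W_F}\cup(\overline{AB}\times\R)$, hence can only be in $I_F$ when on $\p W_F$) yields that for a.e.\ $s$ the state lies either in $W_F\setminus\p G$, where $\p_{xx}V$ is continuous by hypothesis, or in the saturation set $\overline{AB}\times\R$, where $Y^{\bar y,I}$ is frozen and the occupation measure of $X^{\bar x,\bar y,I}$ in the price variable is absolutely continuous.

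Next I would mollify only in $x$: put $V^\eps(\cdot,y):=V(\cdot,y)\ast\varphi_\eps$ for a standard mollifier $\varphi_\eps$. By \eqref{eq:Lip_x}, $V^\eps(\cdot,y)\in C^2$ with $\|\p_{xx}V^\eps\|_\infty\le L$ uniformly in $\eps,y$; moreover $V^\eps\to V$ and $\p_xV^\eps\to\p_xV$ locally uniformly, $\nabla_yV^\eps\to\nabla_yV$ locally uniformly on the sets where the corresponding derivatives of $V$ exist, and $\p_{xx}V^\eps(x_0,y)\to\p_{xx}V(x_0,y)$ at every Lebesgue point $x_0$ of $\p_{xx}V(\cdot,y)$. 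I would then apply the classical It\^o formula to $V^\eps\big(X^{\bar x,\bar y,I}_t,Y^{\bar y,I}_t\big)$. Since $V^\eps$ (like $V$) is only $C^1$ in $y$ on parts of its domain, the finite--variation part of the expansion must be obtained path--by--path from the Lebesgue--Stieltjes chain rule after splitting $dI=dI^i+dI^j$: by \eqref{eq:cond_b_i_optimal} the measure $dI^j$ charges only $\overline{I_F}$, where $\p_{y_j}V$ is continuous on the neighbourhood $U_r$ by condition (ii) of Definition \ref{def:general_sol_VP}, whereas $dI^i$ and the jumps of $Y^{\bar y,I}$ occur only while the state is in $\overline{W_F}\cup(\overline{AB}\times\R)$, where $\p_{y_i}V$ is continuous by condition (i). This produces \eqref{eq:ito} with $V$ replaced by $V^\eps$.

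Finally I would let $\eps\to 0$. The boundary terms, the stochastic integral $\sigma\int_0^t\p_xV^\eps\,dW_s$, the Stieltjes integral $\int_0^t\langle\nabla_yV^\eps,dI^c_s\rangle$, the jump sum, and the first--order part of $\int_0^t\cL V^\eps\,ds$ all converge (in $L^1$, and in probability for the stochastic integral, hence $P$--a.s.\ along a subsequence) from the local uniform convergences above, the linear growth condition (iii) of Definition \ref{def:general_sol_VP}, the boundedness of $Y^{\bar y,I}$, the finite variation of $I$, and standard moment bounds for $X^{\bar x,\bar y,I}$. For the second--order term one uses $|\tfrac12\p_{xx}V^\eps|\le\tfrac L2$ together with the localisation step: for a.e.\ $s$ the state is at a continuity point (in $x$) of $\p_{xx}V$ or in the saturation set, so $\p_{xx}V^\eps\big(X^{\bar x,\bar y,I}_s,Y^{\bar y,I}_s\big)\to\p_{xx}V\big(X^{\bar x,\bar y,I}_s,Y^{\bar y,I}_s\big)$ $P$--a.s., and bounded convergence gives $\int_0^t\tfrac12\p_{xx}V^\eps\,ds\to\int_0^t\tfrac12\p_{xx}V\,ds$, completing \eqref{eq:ito}.

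I expect the main obstacle to be precisely the tension stressed in the paper before Lemma \ref{lem:boundary}: the (joint) law of $Y^{\bar y,I}$ need not be absolutely continuous, so the occupation measure of the state process is uncontrolled, and $V$ fails to be $C^1$ in $y_j$ on $W_F$. Both difficulties are absorbed by the $j$--optimality constraint $dY^j=0$ on $W_F$, which confines the $y_j$--motion to $\overline{I_F}\subset U_r$, and by Lemma \ref{lem:boundary}, which confines the time actually spent on the kink surfaces to a Lebesgue--null set; thus the technical heart of the argument is carrying out these two reductions carefully --- including the auxiliary extension of Lemma \ref{lem:boundary} to the diagonal boundary $\{x=g(y_j)\}$ --- while the absence of a local--time term is essentially automatic once \eqref{eq:Lip_x} is in hand.
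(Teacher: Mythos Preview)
Your proposal is broadly correct and identifies the right structural ingredients --- the Lipschitz bound \eqref{eq:Lip_x} to kill any local-time term, Lemma \ref{lem:boundary} to handle the null-occupation of the kink surfaces, and the $j$-optimality constraint to confine the $dI^j$--measure to the neighbourhood $U_r$ where $\p_{y_j}V$ is continuous. However, your regularisation strategy differs from the paper's in one essential respect, and the difference matters for rigour. You mollify only in $x$; the resulting $V^\eps$ is $C^2$ in $x$ but inherits from $V$ the failure of $\p_{y_j}V^\eps$ to exist on all of $W_F$. You then propose to ``obtain the finite-variation part path-by-path from the Lebesgue--Stieltjes chain rule after splitting $dI=dI^i+dI^j$''. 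This is the weak point: the classical It\^o formula for a semimartingale with continuous martingale part $X$ and finite-variation part $Y$ requires $V^\eps\in C^{2,1}$ on an open set containing the trajectory, and $X$ has infinite variation, so one cannot simply freeze $X$ and integrate path-by-path in $y$. The paper resolves this by a \emph{second} regularisation in $y$, namely $V^{n,\eps}(x,y)=\eps^{-2}\int_{y_1}^{y_1+\eps}\int_{y_2}^{y_2+\eps}V^n(x,z)\,dz$, which is genuinely $C^{2,1}$ so that the standard It\^o formula applies without caveat. The passage $\eps\to 0$ then hinges on a structural observation you do not use: because a $j$-admissible boundary $F$ is strictly increasing in both variables, the shifted points $(x,y_1+\eps,y_2+\eps\lambda)$ remain in $W_F$ whenever $(x,y)\in\overline{W_F}$, so the difference quotients defining $\p_{y_i}V^{n,\eps}$ and $\p_{y_j}V^{n,\eps}$ are computed entirely inside the region where the corresponding derivatives of $V^n$ are continuous, yielding uniform convergence on compacts. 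This monotonicity trick is the technical heart of the paper's proof and is what makes the double regularisation close. Your single-regularisation route could in principle be completed by invoking (or proving) a version of It\^o's formula for $C^{2}_x$ functions that are only $C^1$ in those $y$-directions actually charged by the finite-variation part, but that is itself a nontrivial lemma; the paper's approach sidesteps it. Finally, your remark about extending Lemma \ref{lem:boundary} to the diagonal surface $\{x=g(y_j)\}$ in the prolongation regime is well taken: the paper's statement of Lemma \ref{lem:boundary} covers only $\p F\cup\p G$, and the same running-maximum/Girsanov argument is indeed needed for that extra piece of $\partial W_F$, though the paper does not spell this out.
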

\begin{remark}\label{rem:ito}
Note that the It\^o formula \eqref{eq:ito} is non-trivial only when $(X_t^{\bar x,\bar y,I},Y_t^{\bar y,I})\in D$ and $Y_t^{\bar y,I,j}<\theta/2$. Precisely, 
setting the stopping time
\begin{equation}\label{eq:tausat}
\tau_{j,\text{sat}}:=\begin{cases} 
\inf J_{j,\text{sat}}& \text{if } J_{j,\text{sat}} \neq \emptyset  \\
+\infty & \text{otherwise}
\end{cases}, \qquad \mbox{ with } \qquad J_{j,\text{sat}}:=  \{ t\geq0 : Y^{\bar y, I ,j}_t \geq \theta/2\}  ,
\end{equation}
we have $(X_t^{\bar x,\bar y,I},Y_t^{\bar y,I})\in  W_F^{\text{sat}}$ and $Y^{\bar y,I,j}_t = Y^{\bar y,I,j}_{\tau_{j,\text{sat}}}$ for any $t\geq \tau_{j,\text{sat}}$, and thus \eqref{eq:ito} reduces to the standard It\^o formula as $\partial_{xx}V$ and $\partial_{y_i}V$ are continuous on ${W_F^{\text{sat}}}$. Analogously, setting the stopping time
\begin{equation}\label{eq:tausat_bis}
\tau_{\text{sat}}:=\begin{cases} 
\inf J_{\text{sat}}& \text{if } J_{\text{sat}} \neq \emptyset  \\
+\infty & \text{otherwise}
\end{cases}, \qquad  \mbox{ with } \qquad J_{\text{sat}}:=  \{ t\geq 0 : Y^{\bar y, I ,1}_t+Y^{\bar y, I ,2}_t = \theta \}  ,
\end{equation}
we have that $Y_t^{\bar y,I,1} + Y_t^{\bar y,I,2} = \theta$ with $Y^{\bar y,I}_t = Y^{\bar y,I}_{\tau_{\text{sat}}}$ for any $t\geq \tau_{\text{sat}}$, and thus \eqref{eq:ito} reduces again to the classical It\^o formula as $\partial_{x}V$ is Lipschitz continuous with respect to the variable $x$. 

On the other hand, for $t\in [0,\tau_{j,\text{sat}}\wedge\tau_{\text{sat}}[$, the regularity conditions (i)-(ii) in Definition \ref{def:regular_sol_VP} are sufficient to write \eqref{eq:ito}. Indeed, the $j$-optimality of $I$ guarantees that the process $(X^{\bar x,\bar y,I}_t, Y^{\bar y,I}_t)\in \overline{W_F}$, and the partial derivatives $\p_{y_i}V$, $\p_{xx}V$ are continuous on $\overline{W_F}$. Moreover, if $(X^{\bar x,\bar y,I}_t, Y^{\bar y,I}_t)\in W_F$, then $dY^{x,I,j}_{t^+}=0$ by \eqref{eq:cond_b_i_optimal}, so that $\p_{y_j}V$ only needs to be well defined in a lower neighbourhood of $I_F$.
\end{remark}
\begin{proof}
Let us fix $(\bar x, \bar y)\in {\R \times \overline{D}}$, and $I\in \cI(y)$ {$j$-optimal with respect to $F$ and $(\bar x, \bar y)$}. 
To ease notation, we will drop the superscripts in the processes $X^{\bar{x},\bar{y},I}$ and $Y^{\bar{y},I}$, {and fix $\{i,j\}=\{1,2\}$} throughout the proof. 

It is not restrictive to assume $(\bar x, \bar y) \in \overline{W_{F}}$. Indeed, if $(\bar x, \bar y) {\notin \overline{W_{F}}}$, 
we have
\begin{equation}
V(X_t, Y_t) - V(\bar x , \bar y ) = V(X_t, Y_t) - V(X_{0}, Y_{0}) + V(X_{0}, Y_{0}) - V(\bar x , \bar y),
\end{equation}
where $(X_{0}, Y_{0})=(\bar x, \bar y+I_0)\in \overline{W_{F}}$ by Definition \ref{def:i_optimal_control}. {Furthermore, in light of Remark \ref{rem:ito}, it is not restrictive to assume $\bar{y}\in D$ with $\bar y_2 < \theta/2$, and it is enough to prove \eqref{eq:ito} up to the first (and last) exit time of $Y$ from $D\cap\{ y_2<\theta/2 \}$. Namely, setting
\begin{align}
S^V_t &:=  V(X_{t}, Y_{t}) -  V({\bar x,\bar y}) -  \sum_{
{0\le s\le {t}}} \big( V (X_s , Y_s ) - V (X_s , Y_{s-} )  \big), \\
R^V_t &: =  \int_{0}^{t} \cL V (X_s , Y_s ) ds + \sigma \int_0^{t} \partial_x V (X_s , Y_s )\, d W_s + \int_0^{t} \big\langle \nabla_y V (X_s , Y_s ) , d I^c_s \big\rangle ,
\end{align}
we show
\begin{equation}
S^V_{t\wedge \tau} = R^V_{t\wedge \tau}, \qquad t\geq 0,
\label{eq:ito_bis}
\end{equation}
with $\tau:= \tau_{2,\text{sat}}\wedge \tau_{\text{sat}}$, where $\tau_{2,\text{sat}}$, $\tau_{\text{sat}}$ are as defined in \eqref{eq:tausat}-\eqref{eq:tausat_bis}.}

\vspace{2pt}

\emph{Step 1 (localization).} We show that it is enough to prove
\begin{equation}
S^V_{t\wedge \tau_{\delta}} = R^V_{t\wedge \tau_{\delta}} , \qquad t\geq 0, 
\label{eq:ito_ter}
\end{equation}
for any $0<\delta<<1$, where 
\begin{equation}\label{eq:tau_del}
\tau_{\delta}:=\begin{cases} 
\inf J_{\delta}& \text{if } J_{\delta} \neq \emptyset  \\
+\infty & \text{otherwise}
\end{cases}
\end{equation}
with
\begin{equation}
J_{\delta}:=  \{ t\geq 0 : Y_t \notin D_{\delta}\} , \qquad D_{\delta} : = \{y\in D : y_1+y_2 \leq \theta - \delta, y_2\leq \theta/2 - \delta \} .
\end{equation}
As $Y^1$ and $Y^2$ are cadlag and non-decreasing, $\tau_{\delta} \to \tau$, $\Pb$-almost surely, as $\delta \to 0^+$. Also, notice that $R^V$ is continuous. Finally, the fact that $Y$ is cadlag implies that $S^V_t = S^V_{t^-}$ for any $t> 0$. Therefore, we have
\begin{equation}
R^V_{t\wedge \tau_{\delta}} \longrightarrow R^V_{t\wedge \tau} , \  S^V_{t\wedge \tau_{\delta}} \longrightarrow S^V_{t\wedge \tau} \quad \Pb\text{-almost surely},\quad \text{as } \delta\to 0^+,
\end{equation}
for any $t\geq 0$. 

\vspace{2pt}

\emph{Step 2 (regularization in $x$).} Fix $0<\delta<<1$ and prove \eqref{eq:ito_ter} under the additional assumption $\nabla_y V \in C (\R\times D_{\eps})$ for some $\eps<\delta$. Denoting by $\phi$ a standard mollifier on $\R$, we set a smooth (in $x$) sequence $V^n$ defined as
\begin{equation}\label{E1}
{{V}^n(x,y):=\int_{\R} n\phi\big(n(x-\xi)\big) V(\xi,y) d\xi
, \qquad (x,y)\in \R\times D_{\eps}.} 
\end{equation}
Now \eqref{eq:ito_ter} holds for $V^n$ by standard It\^o formula. We now want to pass to the limit as $n\to \infty$. Recalling assumption (i) in Definition \ref{def:regular_sol_VP}, first observe that $V^n, \partial_x V^n$ and $\nabla_y V^n$ converge to $V, \partial_x V$ and $\nabla_y V$, respectively, uniformly on compacts of $ \R\times {D_{\eps}}$. 
Therefore, we obtain 
\begin{equation}
S^{V^n}_{t\wedge \tau_{\delta}} - R^{V^n}_{t\wedge \tau_{\delta}} - \frac12 \sigma^2 \int_{0}^{t\wedge \tau_{\delta}} \partial_{xx} V^n (X_s , Y_s ) ds \longrightarrow S^{V}_{t\wedge \tau_{\delta}} - R^{V}_{t\wedge \tau_{\delta}} -  \frac12 \sigma^2  \int_{0}^{t\wedge \tau_{\delta}} \partial_{xx} V (X_s , Y_s ) ds, \qquad \text{as } n\to+\infty,
\end{equation}
$\Pb$-almost surely.

To conclude the proof of \eqref{eq:ito_ter} we then have to show that
\begin{equation}\label{eq:conv_second_der}
\int_{0}^{t\wedge \tau_{\delta}} \partial_{xx} V^n (X_s , Y_s ) ds \longrightarrow \int_{0}^{t\wedge \tau_{\delta}} \partial_{xx} V (X_s , Y_s ) ds, \qquad \text{as}\quad n\to \infty,
\end{equation}
$\Pb$-almost surely. Note that, by assumption, $\partial_{xx}V$ is continuous on $\overline{W_F}
$, and thus we have
\begin{equation}
\partial_{xx} V^n (x ,y ) \longrightarrow \partial_{xx} V (x ,y ),  \qquad \text{as}\quad n\to \infty,
\end{equation}
for any $(x,y)\in (\R\times D_{\eps}) \setminus \partial W_F$. Therefore, by Lemma \ref{lem:boundary} together with Fubini's theorem, we have 
\begin{equation}
{\bf 1}_{s\leq \tau_{\delta}} \partial_{xx} V^n (X_s , Y_s ) \longrightarrow {\bf 1}_{s\leq \tau_{\delta}} \partial_{xx} V (X_s , Y_s ) \qquad \text{for almost every }s\in[0,t], 
\end{equation}
$\Pb$-almost surely. Furthermore, by assumption (i) in Definition \ref{def:regular_sol_VP}, for any $R>0$ there exists a constant $L_R$ such that
\begin{equation}
|\partial_{x} V^n (x , y ) - \partial_{x} V^n (x' , y ) | \leq \int_{\R} n\phi (n\, \xi ) | V(x-\xi,y) - V(x' - \xi,y)| d\xi \leq L_{R} |x - x'|, \qquad x\in[-R,R],\ y\in {D}_{\eps},
\end{equation}
which yields 
\begin{equation}
|\partial_{xx} V^n (x , y )| \leq L_R, \qquad x\in[-R,R],\ y\in {D}_{\eps}.
\end{equation}
Therefore, \eqref{eq:conv_second_der} follows by Lebesgue's dominated convergence theorem, $\Pb$-almost surely.

\vspace{2pt}

\emph{Step 3 (regularization in $y$).} We fix $0<\delta<<1$ and prove \eqref{eq:ito_ter} by
regularizing w.r.t $y$. For any $0<\eps<\delta/2$ we set
\begin{equation}
{{V}^{\eps}(x,y)=\frac{1}{\eps^2}\int_{y_1}^{y_1+\eps}\int_{y_2}^{y_2+\eps}{V}(x,z)dz_1dz_2, \quad (x,y)\in \R\times D_{2\eps}.}
\end{equation}
As $V$ is continuous, we have $\nabla_{y}{V}^{\eps} \in C (\R\times  D_{2\eps})$ and
\begin{align}
\p_{y_1}{ V}^{\eps}(x,y)&=\frac{1}{\eps^2}\int_{y_2}^{y_2+\eps}\big({ V}(x,y_1+\eps,z_2)-{ V}(x,y_1,z_2)\big)dz_2, \label{E3}\\
\p_{y_2}{ V}^{\eps}(x,y)&=\frac{1}{\eps^2}\int_{y_1}^{y_1+\eps}\big({ V}(x,z_1,y_2+\eps)-{ V}(x,z_1,y_2)\big)dz_1.\label{E4}
\end{align}
Also, owing to assumption (i) in Definition \ref{def:regular_sol_VP}, we have $\partial_{xx}V^{\eps}\in C(\overline{W_F} \cap (\R\times D_{2\eps}))$ and $\partial_{x}V^{\eps} $ satisfying the Lipschitz condition \eqref{eq:Lip_x} for any $y\in D_{2\eps}$. In particular,
\begin{equation}
\p^k_{x}{V}^{\eps}(x,y)=\frac{1}{\eps^2}\int_{y_1}^{y_1+\eps}\int_{y_2}^{y_2+\eps}\p^k_{x}{V}(x,z)dz_1dz_2, \qquad k=1,2.\label{E2}
\end{equation}
Therefore, Step 2. yields \eqref{eq:ito_ter} for $V^{\eps}$. We now need to pass to the limit as $\eps\to 0^+$.

Owing again to assumption (i) in Definition \ref{def:regular_sol_VP}, we have that $V^{\eps}, \partial_x V^{\eps}$ and $\partial_{xx} V^{\eps}$ converge to $V, \partial_x V$ and $\partial_{xx} V$, respectively, uniformly on compacts of $\overline{W_F}\cap (\R\times D_{2\eps})$. As $(X,Y)$ lives on $\overline{W_F}$, this yields
\begin{equation}
S^{V^{\eps}}_{t\wedge \tau_{\delta}} - R^{V^{\eps}}_{t\wedge \tau_{\delta}} - \int_0^{{t\wedge \tau_{\delta}}} \big\langle \nabla_y V^{\eps} (X_s , Y_s ) , d I^c_s \big\rangle \longrightarrow S^{V}_{t\wedge \tau_{\delta}} - R^{V}_{t\wedge \tau_{\delta}} -\int_0^{{t\wedge \tau_{\delta}}} \big\langle \nabla_y V (X_s , Y_s ) , d I^c_s \big\rangle, \qquad \text{as}\quad \eps\to 0^+,
\end{equation}
$\Pb$-almost surely. To conclude the proof of \eqref{eq:ito_ter} we then have to show that
\begin{equation}\label{eq:conv_second_der_bis}
 \int_0^{{t\wedge \tau_{\delta}}} \partial_{y_i} V^{\eps} (X_s , Y_s )  d I^{i,c}_s  \longrightarrow  \int_0^{{t\wedge \tau_{\delta}}} \partial_{y_i} V (X_s , Y_s )  d I^{i,c}_s  , \qquad \text{as}\quad \eps\to 0^+.
\end{equation}
$\Pb$-almost surely, for $i=1,2$. Let us first consider the term involving $\partial_{y_1}V^{\eps}$. Notice that, for any $(x,y)\in \overline{W_F} \cap (\R\times D_{\delta})$, we have $(x,\eta)\in \overline{W_F}$ for all $\eta\in (y_1,y_1+\eps) \times (y_2,y_2+\eps)$. Therefore, by applying mean-value theorem to \eqref{E3} and owing to $\partial_{y_1}V\in C(\overline{W_F})$ we obtain
\begin{equation}
\partial_{y_1}V^{\eps}(x,y)\longrightarrow \partial_{y_1}V(x,y), \qquad \text{as } \eps\to 0^+,
\end{equation}
for any $(x,y)\in \overline{W_F} \cap (\R\times D_{\delta})$, which implies
\begin{equation}
{\bf 1}_{s\leq \tau_{\delta}} \partial_{y_1}V^{\eps}(X_s,Y_s) \longrightarrow  {\bf 1}_{s\leq \tau_{\delta}}  \partial_{y_1}V (X_s,Y_s) , \qquad \text{as } \eps\to 0^+.
\end{equation}
As $\partial_{y_1}V^{\eps}$ is bounded on compacts of $\overline{W_F} \cap (\R\times D_{\delta})$, uniformly in $\eps$, and the process $(X,Y)$ lives on $\overline{W_F}$, Lebesgue dominated convergence theorem yields \eqref{eq:conv_second_der_bis} for $i=1$.
Finally, a nearly identical argument can be applied to show \eqref{eq:conv_second_der_bis} for $i=2$, owing to \eqref{eq:cond_b_i_optimal} with $j=2$ and the fact that $\p_{y_j} V\in C(U_r)$, with $
U_r := \{(x,y) \in \R^3 : (x+r,y)\in I_F\}$, for some $r>0$ (see condition (ii) in Definition \ref{def:regular_sol_VP}). This concludes the proof. 
\end{proof}

\begin{proof}[Proof of Theorem \ref{th:verific_th}]
\emph{Step 1.} For fixed $\{i,j\}$ permutation of $\{1,2\}$ and $(\bar{x},\bar{y})\in \R\times {D}$, we first prove that
\begin{equation}\label{eq:prof_ver_part1}
\cS_i(\bar{x},\bar{y}; I) \leq V_i(\bar{x},\bar{y}),
\end{equation}
for any $I\in \cI(\bar{y})$ such that \eqref{eq:cond_Iit} holds true. 
By symmetry, it is enough to consider $\{ i,j \} = \{1,2\}$. Furthermore, we ease notation by removing the upper indeces in $X^{\bar{x},\bar{y},I}$ and $Y^{\bar{y},I}$.

All the next equalities and inequalities are meant $\Pb$-almost surely. By Lemma \ref{lem:ito} (It\^o formula), together with stochastic integration-by-parts formula, we obtain
\begin{equation}
e^{- \rho  T } V_1(X_{ T }, Y_{ T }) - V_1(\bar{x}, \bar{y}) = H_1 + H_2 + H_3 + H_4 + H_5,
\end{equation}
where
\begin{align}
H_1&= \int_{0}^{ T } \big( e^{-\rho  t} \cL V_1 (X_t , Y_t ) -\rho e^{-\rho t } V_1(X_{t}, Y_{t})  \big) dt ,\\
H_2&= \int_{0}^{ T } e^{-\rho  t}\, \partial_{y_1} V_1 (X_t , Y_t )\, d I^{1,c}_t,&&
H_3= \int_{0}^{ T } e^{-\rho  t}\, \partial_{y_2} V_1 (X_t , Y_t )\, d I^{2,c}_t ,\\
H_4&= \sum_{0\leq t \leq  T } e^{-\rho t} \big( V_1 (X_t , Y_t ) - V_1 (X_t , Y_{t-} )  \big), &&
H_5=\sigma \int_{0}^{ T } e^{-\rho  t}\, \partial_x V_1 (X_t , Y_t )\, d W_t . 
\end{align}
By assumption, the control $I$ is $2$-optimal w.r.t. $F_2=F$ and $(\bar x, \bar y)$ in the sense of Definition \ref{def:i_optimal_control}. Therefore, by \eqref{eq:cond_a_i_optimal} and the fact that $(V_1,F_1)$ is a regular solution to $\text{VP}_{\{1,2\}}(F_2)$ (in particular by conditions (iii)-(iv) in Definition \ref{def:regular_sol_VP}), we have
\begin{equation}\label{eq:ineq_prof_verif_1}
H_1 \leq - \int_{0}^{ T } e^{-\rho  t} X_t  Y_t \,  dt, \qquad 
H_2 \leq c \int_{0}^{ T } e^{-\rho  t} \, d I^{1,c}_t.
\end{equation}
As for $H_3$, \eqref{eq:cond_b_i_optimal} yields
\begin{equation}
H_3= \int_{0}^{ T } e^{-\rho  t}\, \partial_{y_2} V_1 (X_t , Y_t ) {\bf 1}_{\{ (X_t , Y_t ) \in U_r \cap \overline{\Wb^2}  \}}  \, d I^{2,c}_t, 
\end{equation}
{for any $r>0$ suitably small, where $U_r$ is defined as in Definition \ref{def:regular_sol_VP}-(ii). As $\partial_{y_2} V_1\in C(U_r)$, and by \eqref{eq:regular_sol_ter}, this yields $H_3=0$.} 
We now study $H_4$: as $I^2$ is continuous we have 
\begin{equation}
H_4 = H_{4,1} + H_{4,2} + H_{4,3} ,
\end{equation}
where
\begin{align} 
H_{4,1} & = V_1 (\bar{x} , \bar{y} + I_0 ) - V_1 (\bar{x} , \bar{y}_{1} , \bar{y}_{2} + I^2_0 ),\\
H_{4,2} & = V_1 (\bar{x} , \bar{y}_1 , \bar{y}_2 + I^2_0 )   - V_1 (\bar{x} , \bar{y}), \\
H_{4,3} & = \sum_{0 < t \leq  T } e^{-\rho t} \big( V_1 (X^{\bar{x},\bar{y},I}_t , Y^{\bar{y},I,1}_t, Y^{\bar{y},I,2}_t ) - V_1 (X^{\bar{x},\bar{y},I}_t , Y^{\bar{y},I,1}_{t-} , Y^{\bar{y},I,2}_{t} )  \big)  .
\end{align}
We have
\begin{equation}\label{eq:ineq_prof_verif_2}
{H}_{4,1} = \int_{0}^{I^1_0} \partial_{y_1} V_1 (\bar{x} , \bar{y}_{1} + s , \bar{y}_{2} + I^2_0 ) ds \leq c\, I^1_0, 
\end{equation}
where the last inequality stems from \eqref{eq:regular_sol}-\eqref{eq:regular_sol_bis} as $(\bar{x} , \bar{y}_{1} + s , \bar{y}_{2} + I^2_0) \in \overline{\Wb^2}$ for any $s\in [0,I^1_0]$. The latter is true since $(\bar{x} , \bar{y}_{1}  , \bar{y}_{2} + I^2_0 ) \in \overline{\Wb^2}$ (by \eqref{eq:cond_c_i_optimal}) and since $F_2$ is increasing in $y_1$. An analogous argument yields
\begin{equation}\label{eq:sum_jumps}
{H}_{4,3} \leq c \sum_{0 < t \leq  T } e^{-\rho t}( I^1_t - I^1_{t-} ).
\end{equation}
On the other hand, \eqref{eq:cond_c_i_optimal} yields $(\bar{x} , \bar{y}_{1}  , \bar{y}_{2} + s) \in {\Ib^2}$ for any $s\in [0,I^2_0]$ if $I^2_0>0$, and thus \eqref{eq:regular_sol_ter} implies
\begin{equation}
{H}_{4,2} = \int_{0}^{I^2_0} \partial_{y_2} V_1 (\bar{x} , \bar{y}_{1}  , \bar{y}_{2} + s ) ds =  0 . 
\end{equation}
Finally, as $V_1$ satisfies the growth condition (v) in Definition \ref{def:regular_sol_VP} and $\int_0^t \Eb[X^2_t] dt$ is finite for any $t>0$,
 we obtain $\Eb[H_5] = 0$. 
Summing up, we proved
\begin{equation}
V_1(\bar{x}, \bar{y}) - \Eb\big[  e^{- T } V_1(X_{ T }, Y_{ T }) \big] \geq 
\Eb\Big[     \int_{0}^{ T } e^{-\rho  t} X_t\,  Y_t \,  dt -  c \int_{0}^{ T } e^{-\rho  t} \, d I^1_t   \Big].
\end{equation}
Owing once more to condition (v) of Definition \ref{def:regular_sol_VP} we obtain
\begin{equation}
V_1(\bar{x}, \bar{y}) + K \Eb\big[  e^{- T } (1 + X_{ T }) \big] \geq 
\Eb\Big[     \int_{0}^{ T } e^{-\rho  t} X_t\,  Y_t \,  dt -  c \int_{0}^{ T } e^{-\rho  t} \, d I^1_t   \Big].
\end{equation}
Taking the limit as $ T \to + \infty$ and applying the Lebesgue dominated convergence theorem proves \eqref{eq:prof_ver_part1}. We refer to \cite[Section 3 and Appendix B]{MR4305783} for the details of the limit argument.

\vspace{2pt}

\emph{Step 2.} We prove \eqref{eq:nash_cond_th}, which, together with \eqref{eq:prof_ver_part1}, proves that $\contr^{\star}$ is a Nash equilibrium. By symmetry, it is enough to prove it for $i=1$.

First note that, by assumption, the control $I^{\star}$ as in \eqref{eq:I_markov_star} is $j$-optimal with respect to $F_j$ and $(\bar x,\bar y)$ for any $(\bar x,\bar y)\in\R \times D$ and $j=1,2$. Therefore, by property \eqref{eq:cond_a_i_optimal} of Definition \ref{def:i_optimal_control}, together with Lemma \ref{lem:boundary}, we have
\begin{equation}
\Pb\Big( (X^{\bar x,\bar y,I}_t,Y^{\bar y,I}_t) \notin \Wb^1 \cap \Wb^2 , Y^{\bar y,I}_t \in D \Big) = 0 ,
\end{equation}
and since $(V_1,F_1)$ is a regular solution to $\text{VP}_{\{1,2\}}(F_2)$ (in particular by conditions (iii)-(iv) in Definition \ref{def:regular_sol_VP}), we obtain
\begin{equation}
\cL V_1 (X^{\bar{x},\bar{y},I^{\star}}_t , Y^{\bar{y},I^{\star}}_t ) -\rho  V_1(X^{\bar{x},\bar{y},I^{\star}}_{t}, Y^{\bar{y},I^{\star}}_{t}) = - X^{\bar{x},\bar{y},I^{\star}}_t\,  Y^{\bar{y},I^{\star},1}_t , \qquad t>0,
\end{equation}
$\Pb$-almost surely, and thus the first inequality in \eqref{eq:ineq_prof_verif_1} becomes an equality. Furthermore, as $(\overline{\Wb^1} \cap \overline{\Wb^2})\setminus \Ib^1 \subset \Wb^1$, by \eqref{eq:cond_b_i_optimal} we have
\begin{equation}\label{eq:diff_Y_zero}
(X^{\bar x,\bar y,I^{\star}}_t, Y^{\bar y,I^{\star}}_t) \in (\overline{\Wb^1} \cap \overline{\Wb^2})\setminus \Ib^1 \Rightarrow d Y^{\bar y,I^{\star},1}_{t+} = 0.
\end{equation}
Therefore, owing to ${\partial_{y_1} V_1} \in C(\overline{\Wb^2})$ and to the second relation in \eqref{eq:regular_sol_bis}, the second inequality in \eqref{eq:ineq_prof_verif_1} becomes an equality as well.
Furthermore, condition \eqref{eq:cond_c_i_optimal} reads 
\begin{equation}\label{eq:cond_c_i_optimal_bis}
I^{\star,2}_0 = \min\{ s\geq 0 : (\bar x,\bar y+s \, e_j)  \in \overline{\Wb^2} \},
\end{equation}
which implies
\begin{equation}
 \int_{0}^{I^1_0} \partial_{y_1} V_1 (\bar{x} , \bar{y}_{1} + s , \bar{y}_{2} + I^{\star,2}_0 ) ds = c\, I^{\star,1}_0.
\end{equation}
Therefore, 
the inequality in \eqref{eq:ineq_prof_verif_2} becomes an equality. Finally, the sum in \eqref{eq:sum_jumps} is null as $I^{\star}$ is continuous. Then we obtain
\begin{equation}
V_1(\bar{x}, \bar{y}) - \Eb\big[  e^{- T } V_1(X^{\bar{x},\bar{y},I^{\star}}_{ T }, Y^{\bar{y},I^{\star}}_{ T }) \big] = 
\Eb\Big[     \int_{0}^{ T } e^{-\rho  t} X^{\bar{x},\bar{y},I^{\star}}_t\,  Y^{\bar{y},I^{\star},1}_t \,  dt -  c \int_{0}^{ T } e^{-\rho  t} \, d I^{\star,1}_t   \Big].
\end{equation}
Passing to the limit as $ T \to\infty$ yields \eqref{eq:nash_cond_th} for $i=1$ and concludes the proof.
\end{proof}

\subsection{Construction of equilibrium strategies}\label{sec:construc}

Let $(V,F)$ be a fixed equilibrium solution to VP. We construct an associated Nash equilibrium $\contr^{\star}$, namely an admissible strategy such that \eqref{eq:nash_cond_th}-\eqref{eq:I_markov_star} holds true. {We recall Notation \ref{not:barriers} and Notation \ref{notation_V_F}, which will be employed through this section. 

In analogy with the one-step game (cfr. Section \ref{sec:free_bound_static}), we set ${F}^{-1}:[F(0,0),\infty)\to {[0,\theta/2]}$ as
\begin{equation}\label{eq:invF_dyn}
{F}^{-1}(x):= \begin{cases}
g^{-1}(x) &\quad \text{if}\quad x\in [{F}(0,0),{F}(\theta/2,\theta/2)],\\ 
\frac{\theta}{2} & \quad \text{if}\quad x> F(\theta/2,\theta/2 ),
\end{cases}
\end{equation}
where $[0,\theta/2]\ni \eta \mapsto g(\eta) := F(\eta,\eta) = F_1(\eta,\eta) = F_2(\eta,\eta)$. Note that $g$ is increasing as $F$ is increasing in both variables, thus $F^{-1}$ is well defined. 
{For any $\xi\in [0,\theta]$, also set
${F}_{\xi}^{-1}:[F(\xi,0),\infty)\to [0,(\theta-\xi)\wedge \xi]$} as
\begin{equation}\label{eq:invFi_dyn}
{F}_{\xi}^{-1}(x):= \begin{cases}
g_{\xi}^{-1}(x) &\quad \text{if}\quad x\in [{{F}(\xi,0),{F}({\xi,(\theta-\xi)\wedge \xi})}],\\ 
(\theta-\xi) \wedge \xi  & \quad \text{if}\quad x> {{F}({\xi,(\theta-\xi)\wedge \xi})}.
\end{cases}
\end{equation}
where $[0,(\theta-\xi) \wedge \xi]\ni \eta \mapsto g_{\xi}(\eta) := F(\xi,\eta) = F_2(\xi,\eta) = F_1(\eta,\xi)$. Note that $g_{\xi}$ is increasing as $F(\xi,\cdot)$ is increasing, thus $F_{\xi}^{-1}$ is well defined. 
\begin{remark}
Note that, given $(x,y)\in \R\times D$, and $I^{\star}\in \cI(y)$ an admissible control that is {$j$-optimal with respect to $F_j$ and $(x,y)$} for both $j=1$ and $j=2$, it must hold that $I^{\star}_0$ corresponds to the equilibrium strategy for the one-step game in \eqref{eq:eq_strat_one_s}, which we recall here for the reader's convenience:
\begin{equation}\label{eq:rep_I0_opt}
I^{\star}_0=\begin{cases}
(0,0), & (x,y)\in \mathbb{W}^1\cap \mathbb{W}^1,\\
(0, {F}^{-1}_{y_1}(x)-y_2), & (x,y)\in \mathbb{W}^1\cap \mathbb{I}^2, \\
({F}^{-1}_{y_2}(x)-y_1 , 0), & (x,y)\in \mathbb{I}^1\cap \mathbb{W}^2, \\
({F}^{-1}(x)-y_1, {F}^{-1}(x)-y_2), & (x,y)\in \mathbb{I}^1\cap \mathbb{I}^2.
\end{cases}
\end{equation}
\end{remark}

To define the candidate Nash strategy,} we set the function ${\bf F}^{-1}:\R\times[0,\theta]\to \R$ as
\begin{equation}\label{eq:F_bold}
{\bf F}^{-1}(x,\xi) := 
\begin{cases}
\theta/2 \wedge (\theta -\xi), & \qquad x >  F({\theta/2\vee \xi},\theta/2\wedge(\theta-\xi)),\\
 F^{-1}(x) ,      & \qquad  x\in ( F(\xi,\xi) , F(\theta/2,\theta/2)],\\
F_{\xi}^{-1}(x) ,      &\qquad x\in ( F(\xi,0) , F(\xi,\xi\wedge (\theta-\xi) ) ],\\
0  ,      & \qquad 
x <  F(\xi,0).
\end{cases}
\end{equation}
For any $(i,j)$ permutation of $\{1,2\}$, define
\begin{equation}\label{eq:I_markov_star_tris}
\contr^{\star,j}_t\big((x_s)_{s\leq t},(y_s)_{s\leq t}\big): = \Big( \sup_{s\leq t} {\bf F}^{-1}(x_s,y_{i,s-}) - y_{j,0-} \Big)^+ , \qquad t\geq 0, \ x\in C([0,t]), \ y\in \NCL{t}.
\end{equation}

\begin{theorem}\label{th_equilibrium}
Let $(V,F)$ be a fixed equilibrium solution to VP {such that the functions ${F}^{-1}$ and ${F}^{-1}_{\xi}$, $\xi\in[0,\theta]$, defined by \eqref{eq:invF_dyn}-\eqref{eq:invFi_dyn} are globally Lipschitz continuous.} 
The strategy $\contr^{\star}{=(\contr^{\star,1},\contr^{\star,2})}$ defined by \eqref{eq:I_markov_star_tris} is a Nash equilibrium. Furthermore, for any $(\bar x , \bar y)\in \R\times D$ we have 
\begin{equation}\label{eq:nash_cond_th_bis}
\cS_i(\bar x,\bar y; I^{\star}) = V_i(\bar x,\bar y) , \qquad   i = 1,2,
\end{equation}
where $I^{\star}\in\mathcal{I}(\bar y)$ is the only admissible control such that 
\begin{equation}\label{eq:I_markov_star_bis}
I^{\star}_t = \contr_t^{\star}\big((X^{\bar x,\bar y,I^{\star}}_s)_{s\leq t}, (Y^{\bar y,I^{\star}}_{s})_{s\leq t}\big), \qquad t\geq 0,
\end{equation}
with $(X^{\bar x,\bar y,I^{\star}}, Y^{\bar y,I^{\star}})$ being the unique solution to \eqref{model}-\eqref{installed_power} with $(x,y) = (\bar x , \bar y)$ and $I = I^{\star}$. Finally, $I^{\star}$ is continuous.
\end{theorem}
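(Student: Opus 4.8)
The plan is to obtain Theorem \ref{th_equilibrium} as a consequence of the Verification Theorem \ref{th:verific_th}. Accordingly, it suffices to establish three things: (i) that $\contr^{\star}$ defined in \eqref{eq:I_markov_star_tris}--\eqref{eq:F_bold} is an admissible strategy in the sense of Definition \ref{def:markov_strategy}, i.e. that for every $(\bar x,\bar y)\in\R\times D$ the fixed-point equation \eqref{eq:I_markov_star_bis} has a unique solution $I^{\star}\in\cI(\bar y)$, with $(X^{\bar x,\bar y,I^{\star}},Y^{\bar y,I^{\star}})$ the associated solution of \eqref{model}--\eqref{installed_power}; (ii) that, for each $j=1,2$, this $I^{\star}$ is $j$-optimal with respect to $F_j$ and $(\bar x,\bar y)$ in the sense of Definition \ref{def:i_optimal_control}; and (iii) that $I^{\star}$ is continuous. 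Granting (i)--(iii), Theorem \ref{th:verific_th} gives at once that $\contr^{\star}$ is a Nash equilibrium and that \eqref{eq:nash_cond_th_bis} holds, which is exactly the assertion.

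The first point is the well-posedness of \eqref{eq:I_markov_star_bis}, and I expect it to be the main obstacle. I would first isolate the possible initial lump installation: evaluating \eqref{eq:I_markov_star_tris} at $t=0$ yields the decoupled prescription $I^{\star,j}_0=\big({\bf F}^{-1}(\bar x,\bar y_i)-\bar y_j\big)^{+}$ for $\{i,j\}=\{1,2\}$, and, running through the four cases of \eqref{eq:F_bold} against the definitions in Notation \ref{not:barriers}, I would check that the post-jump state $(\bar x,\bar y+I^{\star}_0)$ lands in $\overline{W_{F_1}}\cap\overline{W_{F_2}}$, on $\partial W_{F_j}$ in the direction $e_j$ whenever $I^{\star,j}_0>0$; this is precisely condition \eqref{eq:cond_c_i_optimal}. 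For $t>0$ the equation \eqref{eq:I_markov_star_bis} becomes a coupled Skorokhod-type problem on the bounded region $\overline{W_{F_1}}\cap\overline{W_{F_2}}$, in which $Y^{\bar y,I^{\star},j}_t=\bar y_j\vee\sup_{0\le s\le t}{\bf F}^{-1}(X^{\bar x,\bar y,I^{\star}}_s,Y^{\bar y,I^{\star},i}_{s-})$ is coupled to the linear SDE \eqref{model} for $X$. Since $Y^{\bar y,I^{\star},1}+Y^{\bar y,I^{\star},2}\le\theta$, the $Y$-dependent part of the drift is bounded, so a Girsanov change of measure as in the proof of Lemma \ref{lem:boundary} reduces the matter to solving the system pathwise against an autonomous Ornstein--Uhlenbeck path as in \eqref{eq:OU}; existence and uniqueness of $(X^{\bar x,\bar y,I^{\star}},Y^{\bar y,I^{\star}})$ then follow from a Picard/monotone iteration, using that the running-supremum map is $1$-Lipschitz, that ${\bf F}^{-1}(x,\cdot)$ is monotone, ${\bf F}^{-1}(\cdot,r)$ continuous, and the drift correction bounded. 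The point requiring care is the coupling of the two reflections: at each $t>0$ the reflection pushes up at most the lagging coordinate, or both coordinates at once exactly on the diagonal $\{Y^{\bar y,I^{\star},1}_t=Y^{\bar y,I^{\star},2}_t\}\subset\overline{OC}$, where ${\bf F}^{-1}$ imposes the common target $F^{-1}(X_t)$, so no inconsistency arises.

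Once $(X^{\bar x,\bar y,I^{\star}},Y^{\bar y,I^{\star}})$ and $I^{\star}$ are in place, the remaining verifications are short. For \eqref{eq:cond_a_i_optimal} I would use that $Y^{\bar y,I^{\star},i}$ is continuous on $(0,\infty)$, so that $Y^{\bar y,I^{\star},j}_t\ge{\bf F}^{-1}(X^{\bar x,\bar y,I^{\star}}_t,Y^{\bar y,I^{\star},i}_t)$ for $t>0$; by the construction of ${\bf F}^{-1}$ in \eqref{eq:F_bold} together with the strict monotonicity of $F_j$, this inequality is equivalent to $(X^{\bar x,\bar y,I^{\star}}_t,Y^{\bar y,I^{\star}}_t)\in\overline{W_{F_j}}$. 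For \eqref{eq:cond_b_i_optimal} I would invoke the flatness of the Skorokhod map: the running supremum defining $Y^{\bar y,I^{\star},j}$ grows only on the set $\{Y^{\bar y,I^{\star},j}_t={\bf F}^{-1}(X^{\bar x,\bar y,I^{\star}}_t,Y^{\bar y,I^{\star},i}_t)\}=\partial W_{F_j}$, hence $dY^{\bar y,I^{\star},j}_{t+}=0$ on the open set $W_{F_j}$. Combined with \eqref{eq:cond_c_i_optimal}, already checked, this establishes $j$-optimality for $j=1,2$, so the hypotheses of Theorem \ref{th:verific_th} are fulfilled. Finally, the continuity of $I^{\star}$ on $(0,\infty)$ follows because $X^{\bar x,\bar y,I^{\star}}$ and $Y^{\bar y,I^{\star},i}$ are continuous there, and ${\bf F}^{-1}$ is continuous in both arguments --- it is assembled from $F^{-1}$ and $(F(r,\cdot))^{-1}$, both continuous since $F$ is continuous and strictly increasing --- so $s\mapsto{\bf F}^{-1}(X^{\bar x,\bar y,I^{\star}}_s,Y^{\bar y,I^{\star},i}_s)$, its running supremum, and $I^{\star,j}=Y^{\bar y,I^{\star},j}-\bar y_j$ are all continuous. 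Invoking Theorem \ref{th:verific_th} then completes the proof.
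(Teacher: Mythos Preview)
Your overall architecture --- reduce to the Verification Theorem \ref{th:verific_th} after checking admissibility and optimality, then read off continuity --- is exactly the paper's approach (there packaged as Lemmas \ref{lem:admissibility} and \ref{lem:optimal}). However, your item (ii) is too weak to invoke Theorem \ref{th:verific_th}, and this is a genuine gap.

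The hypothesis of Theorem \ref{th:verific_th} is that, for each $j$, \emph{every} control $I\in\cI(\bar y)$ whose $j$-th component satisfies the feedback relation \eqref{eq:cond_Iit} is $j$-optimal. You only verify $j$-optimality for the particular $I^{\star}$ in which \emph{both} components follow the feedback. The stronger statement is precisely what drives Step 1 of the verification proof: to show that player $i$ cannot profit from a unilateral deviation, one must control $\cS_i(\bar x,\bar y;I)$ for any $I$ in which $I^j$ obeys the feedback while $I^i$ is arbitrary (in particular possibly discontinuous). Checking only $I^{\star}$ gives \eqref{eq:nash_cond_th_bis} but not the Nash inequality \eqref{eq:nash_equilibrium}.

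This also contaminates your optimality and continuity arguments. You argue \eqref{eq:cond_a_i_optimal} and the continuity of $I^{\star,j}$ by using that $Y^{\bar y,I^{\star},i}$ is continuous; but for a general $I$ the opponent's component $I^i$ is merely c\`adl\`ag, so $s\mapsto Y^{y,I,i}_s$ can jump. The paper (Lemma \ref{lem:optimal}) handles this by working with the left limits $Y^{y,I,i}_{s-}$ that already appear in \eqref{eq:I_markov_star_tris}: left-continuity of $Y^i_{s-}$ gives left-continuity of $I^j$, and right-continuity is obtained by bounding the possible jump $\Delta I^j_t\le {\bf F}^{-1}(X_t,Y^i_t)-{\bf F}^{-1}(X_t,Y^i_{t-})\le 0$, using that ${\bf F}^{-1}(x,\cdot)$ is non-increasing (because $F$ is increasing in both variables). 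Once continuity of $I^j$ is established this way, \eqref{eq:cond_a_i_optimal} follows by contradiction from the same monotonicity, without any assumption on $I^i$. Your well-posedness sketch (Girsanov plus a pathwise fixed-point argument) is close in spirit to the paper's weak-existence-plus-Gronwall-uniqueness proof and should go through, but you should make the $j$-optimality argument work for arbitrary $I^i$ before applying Theorem \ref{th:verific_th}.
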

\begin{proof} By Lemma \ref{lem:admissibility} and Lemma \ref{lem:optimal} below, we can apply (verification) Theorem \ref{th:verific_th} and obtain the result. In particular, $I^{\star}$ is continuous because, by Lemma \ref{lem:optimal}, $I^{\star}$ is $j$-optimal w.r.t. $F_j$ and $(\bar{x},\bar{y})$, for both $j=1,2$, in the sense of Definition \ref{def:i_optimal_control}.
\end{proof}
\begin{remark}
By inspecting the proof of Lemma \ref{lem:admissibility} below, one can represent the control $I^{\star}$ similarly to \cite{MR4305783}, namely as the unique (continuous) solution to the system
\begin{equation}\label{eq:optimal_cont}
\begin{cases}
(X^{\bar x,\bar y,I^{\star}}_t,Y^{\bar y,I^{\star}}_t) \in \overline{\mathbb{W}^1} \cap \overline{\mathbb{W}^2}\\
dX^{\bar x, I^{\star}}_t=k\left(\mu-\b \sum_{i=1,2}Y_{t}^{\bar y,I^{\star},i}-X^{\bar x,I^{\star}}_t\right)dt+\sigma dW_s \\
dI^{\star,j}_t = {\bf 1}_{(X^{\bar x,\bar y,I^{\star}}_t,Y^{\bar y,I^{\star}}_t) \in \mathbb{I}_j} dI^{\star,j}_t, \qquad j=1,2 \\
\end{cases},
\end{equation}
together with $Y_{t}^{\bar y,I^{\star}} = \bar y + I^{\star}_t$ and with the initial conditions
\begin{equation}\label{eq:optimal_cont_bis}
\begin{cases}
X^{\bar x,\bar y,I^{\star}}_0 = \bar x\\
I^{\star,1}_0 = \big( {\bf F}^{-1}(\bar x,\bar y_2) - \bar y_1 \big)^+ \\ 
I^{\star,2}_0 = \big( {\bf F}^{-1}(\bar x,\bar y_1) - \bar y_2 \big)^+ 
\end{cases}.
\end{equation}
{Note that this representation for $I^{\star}$ is equivalent to the one in \eqref{eq:rep_I0_opt}.}
\end{remark}

We complete the section with the Lemmas that appear in the proof of Theorem \ref{th_equilibrium}.

\begin{lemma}\label{lem:optimal}
For any $\{ i, j \}$ permutation of $\{ 1,2 \}$
 and $(\bar{x},\bar{y})\in \R\times {D}$, every control process $I\in\mathcal{I}(\bar y)$ such that 
\begin{equation}\label{eq:cond_Iit_bis}
I^j_t := \Big( \sup_{s\leq t} {\bf F}^{-1}(X^{\bar x, \bar y, I}_s , Y^{\bar y, I, i}_{s-}) - \bar y_{j} \Big)^+ , \qquad t\geq 0, 
\end{equation}
is $j$-optimal w.r.t. $F_j$ and $(\bar{x},\bar{y})$ in the sense of Definition \ref{def:i_optimal_control}. 
\end{lemma}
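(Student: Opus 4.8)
The plan is to verify, for the control $I$ defined implicitly by \eqref{eq:cond_Iit_bis}, each of the three defining conditions of $j$-optimality in Definition \ref{def:i_optimal_control}: the inclusion \eqref{eq:cond_a_i_optimal} into $\overline{W_{F_j}}\cup(\overline{AB}\times\R)$, the flat-off condition \eqref{eq:cond_b_i_optimal} that $dY^{y,I,j}_{t+}=0$ whenever the state lies in the open set $W_{F_j}$, and the initial-jump prescription \eqref{eq:cond_c_i_optimal}. By the symmetry Remark \ref{rem:equilibrium_sol} and Notation \ref{notation_V_F} it suffices to treat $j=2$ (so $i=1$): I would write $Y^{1}_{s-}$ for the reflected first coordinate and analyze $\sup_{s\le t}{\bf F}^{-1}(X_s,Y^{1}_{s-})$. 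The key structural fact is that ${\bf F}^{-1}(\cdot,r)$, for each fixed $r$, is exactly the (generalized) inverse of the section $x=F_2(\,\cdot\,,\text{appropriate argument})$ of the $2$-admissible boundary $F=F_2$, clipped to the admissible range $[0,\theta/2\wedge(\theta-r)]$; this is built into \eqref{eq:F_bold} case by case, and it is the monotonicity of $F$ in both variables (Definition \ref{def:admissible_boundary}) that makes these inverses well-defined and monotone.

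First I would establish \eqref{eq:cond_c_i_optimal}: at $t=0$ the running supremum reduces to ${\bf F}^{-1}(\bar x,\bar y_1)$, so $I^2_0=\big({\bf F}^{-1}(\bar x,\bar y_1)-\bar y_2\big)^+$, and I must check this equals $\min\{s\ge 0:(\bar x,\bar y+s\,e_2)\in\overline{W_{F_2}}\}$. This is a direct computation from the description of $W_{F_2}=W^{\text{free}}_{F_2}\cup W^{\text{prol}}_{F_2}\cup W^{\text{sat}}_{F_2}$ in Notation \ref{not:barriers}: moving $\bar y$ in the $e_2$ direction either immediately lands in $\overline{W_{F_2}}$ (giving jump $0$, matching ${\bf F}^{-1}(\bar x,\bar y_1)\le \bar y_2$) or must travel until the second coordinate reaches the relevant branch of ${\bf F}^{-1}$ — the free branch $(F_2(\bar y_1,\cdot))^{-1}(\bar x)$ when $\bar y_1$ is the larger coordinate after the jump, the prolongation value $F^{-1}(x)$ along the diagonal, or the saturation value $\theta/2\wedge(\theta-\bar y_1)$. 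I would match each of the four cases in \eqref{eq:F_bold} against the corresponding piece of $W_{F_2}$.

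Next, for \eqref{eq:cond_a_i_optimal} and \eqref{eq:cond_b_i_optimal} for $t>0$: after the initial jump the state $(X_t,Y^1_t,Y^2_t)$ has $Y^2_t=\bar y_2+I^2_t=\big(\sup_{s\le t}{\bf F}^{-1}(X_s,Y^1_{s-})\big)\vee\bar y_2$, and I want to show $X_t\le F_2(Y_t)$ (or the state is in the saturation/AB part) — i.e. $(X_t,Y_t)\in\overline{W_{F_2}}$ — and that $Y^2$ increases only when equality $X_t=F_2(Y_t)$ holds on the relevant branch. The argument mirrors the one-player reasoning recalled in the proof of Lemma \ref{lem:boundary} and the construction in \cite{MR4305783}: since $Y^2_t\ge {\bf F}^{-1}(X_t,Y^1_{t-})$ by definition of the supremum, applying the monotone $F_2(\cdot,Y^1_t)$ (and using that ${\bf F}^{-1}$ inverts it) yields $F_2(Y^1_t,Y^2_t)\ge X_t$; one must handle separately the prolongation region (where $Y^1_t\le Y^2_t$ so the governing boundary is the diagonal value $F(Y^2_t,Y^2_t)$) and the saturation cap. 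The flat-off property follows because the running supremum $\sup_{s\le t}{\bf F}^{-1}(X_s,Y^1_{s-})$ increases at time $t$ only if ${\bf F}^{-1}(X_t,Y^1_{t-})$ attains a new maximum, which forces $X_t$ to sit on the graph $\partial F_2$; away from it $dI^{2,c}_t=0$, hence $dY^{y,I,2}_{t+}=0$. Continuity of $I^2$ for $t>0$ is immediate since $X$ and $Y^1_{s-}$ are right-continuous and ${\bf F}^{-1}$ is continuous on the interior of each branch, with matching at the branch interfaces guaranteed by the continuity of $F$.

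The main obstacle, I expect, is the careful bookkeeping at the interfaces between the four branches of \eqref{eq:F_bold} and, correspondingly, between $W^{\text{free}}_{F_2}$, $W^{\text{prol}}_{F_2}$, $W^{\text{sat}}_{F_2}$ — in particular verifying that when $Y^1_{t-}$ itself is moving (because Player $1$ is simultaneously installing, which happens precisely on the diagonal $\overline{OC}$ where both install to keep $Y^1=Y^2$), the supremum defining $I^2$ still produces a continuous, admissible, boundary-respecting control and does not violate the constraint $Y^1_t+Y^2_t<\theta$. This is where the clipping $\theta/2\wedge(\theta-r)$ in \eqref{eq:F_bold} and the choice of strategy \textbf{b} (nobody installs past $\theta/2$, encoded in $W^{\text{sat}}$) must be shown to interlock consistently; I would isolate this as a separate step and lean on the $i$-admissibility of $\cR(F)=F_1$ and the structural remarks relating $W_{\cR(F)}$ to $\bar{\bf r}(W_F)$.
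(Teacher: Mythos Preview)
Your approach is essentially the paper's: reduce to $j=2$, verify \eqref{eq:cond_c_i_optimal} from $I^2_0=({\bf F}^{-1}(\bar x,\bar y_1)-\bar y_2)^+$, get \eqref{eq:cond_a_i_optimal} from $Y^2_t\ge {\bf F}^{-1}(X_t,Y^1_{t-})\ge {\bf F}^{-1}(X_t,Y^1_t)$, and get \eqref{eq:cond_b_i_optimal} from the running-supremum structure. The interface bookkeeping you anticipate as the ``main obstacle'' is packaged by the paper into the separate statement that ${\bf F}^{-1}$ is globally Lipschitz (Lemma~\ref{lem:F_inv_continuous}), so you need not argue branch by branch.

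There is one genuine slip in your continuity argument. You write that $Y^1_{s-}$ is right-continuous; it is not --- the left-limit process is only left-continuous, so $s\mapsto {\bf F}^{-1}(X_s,Y^1_{s-})$ and hence its running supremum are a priori only left-continuous. Right-continuity of $I^2$ (equivalently $\Delta I^2_t=I^2_{t+}-I^2_t=0$) does \emph{not} follow from continuity of ${\bf F}^{-1}$ alone: at a jump time of $Y^1$ one has ${\bf F}^{-1}(X_t,Y^1_t)\neq {\bf F}^{-1}(X_t,Y^1_{t-})$, and the supremum could in principle jump. The paper closes this as follows: since $F$ is increasing in both variables, $r\mapsto {\bf F}^{-1}(x,r)$ is non-increasing, hence ${\bf F}^{-1}(X_t,Y^1_t)\le {\bf F}^{-1}(X_t,Y^1_{t-})$, which gives $\Delta I^2_t\le 0$; combined with $I^2$ non-decreasing one gets $\Delta I^2_t=0$. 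You should replace your one-line continuity claim with this monotonicity argument.
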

\begin{proof}
By symmetry, it is enough to consider the case $\{ i, j \} = \{ 1,2 \}$. Furthermore, to ease the notation, we suppress the explicit dependence on $\bar x, \bar y, I$ in $X^{\bar x, \bar y, I}$ and $Y^{\bar y, I, i}$.

We first check the continuity of $I^2$. Note that the process $Y^1_{s-}$ is left-countinuous because $I\in \mathcal{I}(\bar y)$ by assumption. Therefore, by 
the continuity of $X$ {and ${\bf F}^{-1}$}, we have that $I^2_t$ is also left-continuous. Furthermore, we have 
\begin{align}
\Delta I^2_t := I^2_{t+} - I^2_{t} & = \Big( \sup_{s\leq t} {\bf F}^{-1}(X_s , Y^1_{s}) - \bar y_{j} \Big)^+  - \Big( \sup_{s\leq t} {\bf F}^{-1}(X_s , Y^1_{s-}) - \bar y_{j} \Big)^+  \\
&\leq {\bf F}^{-1}(X_t , Y^1_{t}) - {\bf F}^{-1}(X_t , Y^1_{t-}) \leq 0,
\end{align}
where the last inequality stems from the fact that $Y^1_t\geq Y^1_{t-}$ and that the function ${\bf F}(x,\cdot)$ is non-increasing (this stems from the fact that $F(y_1,y_2)$ is increasing in both variables). On the other hand, by \eqref{eq:cond_Iit_bis} one simply has $\Delta I^2_t\geq 0$, and thus $\Delta I^2_t = 0$.

We now prove \eqref{eq:cond_a_i_optimal} by contradiction. Assume there exists $t>0$ such that, with positive probability, 
\begin{equation}
{(X_t,Y_t) \not\in \overline{W_F} \text{ and } Y_t \in D.}
\end{equation}
Therefore, we have 
\begin{equation}
{\bf F}^{-1}(X_t , Y^{1}_{t}) > Y^2_{t}.
\end{equation}
As $F(y_1,y_2)$ is increasing in both variables, we also have 
\begin{equation}
{\bf F}^{-1}(X_t , Y^{1}_{t-}) \geq  {\bf F}^{-1}(X_t , Y^{1}_{t}) .
\end{equation}
By \eqref{eq:cond_Iit_bis} and by $\bar y_2\leq Y^2_{t}$, we obtain
\begin{equation}
Y^2_{t} \geq \bar y_{2} +  \big(  {\bf F}^{-1}(X_t , Y^1_{t-}) - \bar y_{2} \big)^+ \geq  \bar y_{2} + \big(  {\bf F}^{-1}(X_t , Y^1_{t}) - \bar y_{2} \big)^+ = {\bf F}^{-1}(X_t , Y^1_{t}) > Y^2_{t},
\end{equation}
which is a contradiction. To show \eqref{eq:cond_b_i_optimal}, we need to prove that, if $(X_t, Y_t) \in W_F$, then there exits a (random) $\eps>0$ such that 
\begin{equation}\label{eq:dYtzero}
Y^2_s=Y^2_t, \qquad t\leq s< t+\eps.
\end{equation}
If $Y^2_t\geq \theta/2$ the claim is obvious by definition of ${\bf F}^{-1}$. Assume then $Y^2_t< \theta/2$. 
First note that $ (X_t,Y_t) \in {W_F} $ implies that $ (X_s,Y_t) \in {W_F} $ for $s> t$ sufficiently close to $t$, and thus
\begin{equation}
{\bf F}^{-1}(X_s , Y^{1}_{t}) < Y^2_{t}.
\end{equation}
Furthermore, as $F(y_1,y_2)$ is increasing in both variables, we also have 
\begin{equation}
{\bf F}^{-1}(X_s , Y^{1}_{s}) \leq {\bf F}^{-1}(X_s , Y^{1}_{t}).
\end{equation}
Therefore, for $s>t$ sufficiently close to $t$ we have ${\bf F}^{-1}(X_s , Y^{1}_{s}) < Y^2_{t}$, which (by \eqref{eq:cond_Iit_bis}) implies $Y^2_s = Y^2_t$.

Finally, \eqref{eq:cond_c_i_optimal} stems directly from 
\begin{equation}
I^{2}_0 = \big( {\bf F}^{-1}(\bar x,\bar y_1) - \bar y_2 \big)^+,
\end{equation}
which in turn is obvious by \eqref{eq:cond_Iit_bis}.
\end{proof}
\begin{lemma}\label{lem:admissibility}
Under the assumptions of Theorem \ref{th_equilibrium}, we have $\contr^{\star}\in\mathcal{A}$ in the sense of Definition \ref{def:markov_strategy}. 
\end{lemma}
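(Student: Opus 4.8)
The plan is to verify the two defining requirements of Definition~\ref{def:markov_strategy}: Borel-measurability of each $\contr^{\star}_{t}$, and, for every $(x,y)\in\R\times D$, existence and uniqueness of $I\in\mathcal{I}(y)$ solving the fixed-point relation \eqref{eq:I_markov_star_tris}. Measurability is routine: by Lemma~\ref{lem:F_inv_continuous} the map ${\bf F}^{-1}$ is Lipschitz, hence continuous, while the running-supremum functional and the evaluation maps $(x_{s})_{s\le t}\mapsto x_{s}$ and $(y_{s})_{s\le t}\mapsto y_{s-}$ are Borel on $C([0,t])\times\NCL{t}$, so $\contr^{\star}_{t}$ is a measurable composition of these. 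The substance is the well-posedness of \eqref{eq:I_markov_star_tris}, and the plan is to reduce it to one-dimensional Skorokhod reflection problems of exactly the type already solved in \cite{MR4305783}; this is also what yields the representation \eqref{eq:optimal_cont}--\eqref{eq:optimal_cont_bis}.

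Fix $(x,y)\in\R\times D$. First I would pin down the initial datum: evaluating \eqref{eq:I_markov_star_tris} at $t=0$ forces $I^{j}_{0}=({\bf F}^{-1}(x,y_{i})-y_{j})^{+}$ for $\{i,j\}=\{1,2\}$, and the bound ${\bf F}^{-1}(x,r)\le\tfrac{\theta}{2}\wedge(\theta-r)$, read off from \eqref{eq:F_bold}, shows that $\hat y:=y+I_{0}$ lies in $\overline{\Wb^{1}}\cap\overline{\Wb^{2}}$ with $\hat y_{1}+\hat y_{2}\le\theta$. For $t>0$ I would first argue, as in the computation of $\Delta I^{j}_{t}$ in the proof of Lemma~\ref{lem:optimal} and using that $r\mapsto{\bf F}^{-1}(x,r)$ is non-increasing, that any solution $I$ is continuous; Lemmas~\ref{lem:optimal} and~\ref{lem:prelim_proof_ver} then show that the associated $(X,Y)$ stays in $\overline{\Wb^{1}}\cap\overline{\Wb^{2}}$ with each $Y^{j}$ increasing only on $\Ib^{j}$, so $(X,Y)$ must solve the reflected system \eqref{eq:optimal_cont}. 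Next I would decouple \eqref{eq:optimal_cont} using that $F_{1}=\cR(F_{2})$ (so the two boundaries agree on the diagonal $\overline{OC}$), that $r\mapsto{\bf F}^{-1}(x,r)$ is non-increasing and constant in $r$ on its second, ``diagonal'' branch in \eqref{eq:F_bold}, and that ${\bf F}^{-1}\le\theta/2$: while $Y^{1}_{t}<Y^{2}_{t}$ (or $Y^{2}_{t}<Y^{1}_{t}$) exactly one player installs, the other being frozen at some level $z$, and the active component solves a one-dimensional reflected Ornstein--Uhlenbeck equation at the Lipschitz boundary $F_{1}(\cdot,z)$ (resp. $F_{2}(z,\cdot)$), uniquely solvable by the Skorokhod-map argument of \cite{MR4305783}; once $Y^{1}_{t}=Y^{2}_{t}$, the common value $Z_{t}$ solves the one-dimensional reflected Ornstein--Uhlenbeck equation at the diagonal boundary $g=F|_{\overline{OC}}$, capped at $\theta/2$, again covered by \cite{MR4305783}, and the passage from the asymmetric to the symmetric phase is continuous because the boundaries agree on $\overline{OC}$. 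Patching these (finitely many) phases produces a unique continuous $I$; a last check that $\sum_{i}(y_{i}+I^{i}_{t})\le\theta$ --- once more from ${\bf F}^{-1}(x,r)\le\tfrac{\theta}{2}\wedge(\theta-r)$ and monotonicity of $Y$ --- and that this $I$ reproduces \eqref{eq:I_markov_star_tris} for all $t\ge0$ yields $I\in\mathcal{I}(y)$, hence $\contr^{\star}\in\mathcal{A}$. Uniqueness in \eqref{eq:I_markov_star_tris} follows since any solution generates a state process solving \eqref{eq:optimal_cont}, whose solution is unique by the phase-by-phase argument.

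The main obstacle is the mutual coupling of $I^{1}$ and $I^{2}$: $I^{j}$ depends on $Y^{i}_{s-}$ \emph{directly} through ${\bf F}^{-1}(X_{s},Y^{i}_{s-})$, not only through a time integral, so a naive Banach fixed-point argument in sup-norm on $[0,T]$ would require the Lipschitz constant of $r\mapsto{\bf F}^{-1}(x,r)$ to be strictly below $1$ --- which fails, since that slope reaches $-1$ on the full-saturation locus $\{Y^{1}+Y^{2}=\theta\}$ (the first branch of \eqref{eq:F_bold} for $r>\theta/2$). The reduction above is precisely what circumvents this: wherever the slope is $-1$, the argument $Y^{i}$ is pinned (a player never installs once its level exceeds $\theta/2$), so the dynamics is frozen there, while everywhere else the coupling is either absent (diagonal branch) or routed through the one-dimensional problems, where \cite{MR4305783} applies verbatim. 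The remaining points are milder: the pathwise dependence of $X$ on $I$ is harmless because the Ornstein--Uhlenbeck drift is affine in $\bar I=I^{1}+I^{2}$, so Gronwall's inequality gives $\sup_{s\le t}|X^{I}_{s}-X^{I'}_{s}|\le C_{T}\int_{0}^{t}\|I_{s}-I'_{s}\|\,ds$, contributing only the short-interval smallness that makes each phase a contraction; and one must verify that the piecewise-in-time construction glues into the single closed-form expression \eqref{eq:I_markov_star_tris}, which comes down to continuity of the Skorokhod map across the switching surfaces.
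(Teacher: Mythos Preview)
Your proposal is correct and, for \emph{uniqueness}, follows essentially the same route as the paper: split at the hitting time $\tau$ of the diagonal $\{y_1=y_2\}$, freeze one component on $[0,\tau)$, use the Lipschitz bound on ${\bf F}^{-1}$ (Lemma~\ref{lem:F_inv_continuous}) together with Gronwall on the affine $X$-dynamics, then repeat on $[\tau,\infty)$ where $Y^1\equiv Y^2$.

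For \emph{existence}, however, the paper takes a genuinely different and shorter route. Rather than building the reflected diffusion directly and invoking the one-player Skorokhod construction of \cite{MR4305783} phase by phase, the paper first sets $\beta=0$, so that $X$ becomes an autonomous Ornstein--Uhlenbeck process independent of $I$; the control $I^{\star}$ is then defined \emph{explicitly} as a running supremum of ${\bf F}^{-1}(X_s,\cdot)$, and one checks by inspection that the pair $(X,I^{\star})$ solves the system with $\beta=0$. Since $I^{\star}$ is bounded, a Girsanov change of measure turns $W$ into a new Brownian motion $\tilde W$ under which the same pair solves the original system with the full drift. This weak-existence-plus-pathwise-uniqueness argument sidesteps precisely the coupling obstacle you identified (the slope $-1$ of $r\mapsto{\bf F}^{-1}(x,r)$ near saturation), because with $\beta=0$ there is no feedback from $I$ to $X$ at all. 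Your approach works too, but it leans on \cite{MR4305783} having already handled the coupled single-player problem, and it requires verifying that the pieces glue continuously across $\tau$; the Girsanov trick makes both issues disappear.
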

\begin{proof}
Let $(\bar x , \bar y)\in \R\times D$ be fixed throughout the proof. We have to show that there exists a unique pair $(X^{\bar x,I^{\star}}_t,I^{\star}_t)_{t\geq 0}$, with $X^{\bar x,I^{\star}}$ continuous and $I^{\star}$ c\`adl\`ag, satisfying
\eqref{eq:I_markov_star_bis} and
\begin{equation}\label{eq:system_XY_proof}
\begin{cases}
X^{\bar x,I^{\star}}_t = \bar x + k \int_0^t \big(\mu-\b \sum_{i=1,2}Y_{s}^{\bar y,i}-X^{\bar x,I^{\star}}_s\big)ds+\sigma W_t\\
Y^{\bar y,I^{\star}}_t = \bar y + I^{\star}_t
\end{cases}, \qquad t \geq0.
\end{equation}
By symmetry, it is enough to consider $\bar y_2\leq \bar y_1$. {Furthermore, we only prove the case $\bar y_1 <\theta/2$ for sake of brevity, the case $\bar y_1 \geq \theta/2$ being simpler.} {Finally, to ease notation, we will neglect the explicit dependence on $\bar x$ and $\bar y$ 
in $X^{\bar x,I^{\star}}$ and $Y^{\bar y,I^{\star}}$.} 
\vspace{2pt}

\emph{Step 1: pathwise uniqueness.} 
Let $(X^{I^{\star}}_t,I^{\star}_t)_{t\geq 0}$ be a solution to the system \eqref{eq:I_markov_star_bis}-\eqref{eq:system_XY_proof}. 
By Lemma \ref{lem:optimal}, we have that $I^{\star}$ is $j$-optimal w.r.t. $F_j$ and $(\bar{x},\bar{y})$, for both $j=1,2$, in the sense of Definition \ref{def:i_optimal_control}. 
This, recalling that we are assuming $\bar y_2 \leq \bar y_1 < \theta/2$, implies that 
\begin{align}
&I^{\star,1}_t = 0,& &  \hspace{-100pt}  0\leq t<\tau, \label{eq:I1tau}   \\
&Y^{I^{\star},1}_t  = Y^{I^{\star},2}_t ,& & \hspace{-100pt}   t\geq \tau,\label{eq:Ytau}
\end{align}
where $\tau$ is the stopping time defined as
\begin{equation}
 \tau := 
\begin{cases}
\inf  H & \text{if }  H\neq\emptyset\\
+\infty & \text{if }  H= \emptyset
\end{cases}, \qquad\quad \begin{aligned}
 H &:= 
  \{  t\geq 0:  X_{t} \geq F(\bar y_1 , \bar y_1)  \}.
\end{aligned}
\end{equation}

Denoting now by $(X^{\tilde I^{\star}}_t, \tilde I^{\star}_t)_{t\geq 0}$ a second solution to the system \eqref{eq:I_markov_star_bis}-\eqref{eq:system_XY_proof}, by  \eqref{eq:I1tau} we obtain
\begin{equation}
I^{\star,2}_{t\wedge\tau} - \tilde I^{\star,2}_{t\wedge\tau}    = \Big( \sup_{s\leq t\wedge\tau} {\bf F}^{-1}\big(X^{I^{\star}}_s , \bar y_1\big) - \bar y_{2} \Big)^+ - \Big( \sup_{s\leq t\wedge\tau} {\bf F}^{-1}\big( X^{ \tilde I^{\star}}_s , \bar y_1\big) - \bar y_{2} \Big)^+ ,
\end{equation}
which yields
\begin{align}
\big| I^{\star,2}_{t\wedge\tau} - \tilde I^{\star,2}_{t\wedge\tau}  \big| & \leq \Big| \sup_{s\leq t\wedge\tau} {\bf F}^{-1}\big(X^{I^{\star}}_s , \bar y_1 \big)  - \sup_{s\leq t\wedge\tau} {\bf F}^{-1}\big( X^{\tilde I^{\star}}_s , \bar y_1\big) \Big| \\
& \leq \sup_{s\leq t\wedge\tau} \big|  {\bf F}^{-1}\big(X^{I^{\star}}_s , \bar y_1 \big) -  {\bf F}^{-1}\big( X^{\tilde I^{\star}}_s , \bar y_1 \big)  \big| 
\intertext{(by 
{the Lipschitz regularity of $F^{-1}_{\bar y_1}$, and thus of ${\bf F}^{-1}(\cdot,\bar y_1)$})}
&\leq \kappa \sup_{s\leq t\wedge\tau}  \big| X^{I^{\star}}_s - X^{\tilde I^{\star}}_s    \big| 
\intertext{}
&\leq \kappa   \int_{0}^{t} \Big( \big| X^{I^{\star}}_{s\wedge\tau} - X^{\tilde I^{\star}}_{s\wedge\tau}    \big|  +\big| I^{\star,2}_{s\wedge\tau} - \tilde I^{\star,2}_{s\wedge\tau}  \big| \Big) ds.
\end{align}
Gronwall's inequlity then yields 
\begin{equation}\label{eq:uniqueness_proof}
\big(X^{I^{\star}}_t, I^{\star,2}_{t} \big) = \big(X^{\tilde I^{\star}}_t, \tilde I^{\star,2}_{t} \big) , \qquad t\in[0,\tau].
\end{equation}
On the event $\{ \tau < \infty \}$, we also have
\begin{align}
I^{\star,2}_{\tau + t} - \tilde I^{\star,2}_{\tau + t}  &  = \Big( \sup_{s\leq \tau + t} {\bf F}^{-1}\big(X^{I^{\star}}_s , Y^{I^{\star}, 1}_s\big) - \bar y_{2} \Big)^+ - \Big( \sup_{s\leq \tau + t} {\bf F}^{-1}\big( X^{\tilde I^{\star}}_s ,  Y^{\tilde I^{\star}, 1}_s  \big) - \bar y_{2} \Big)^+ 
\intertext{(by \eqref{eq:Ytau})}
&  =  \sup_{\tau \leq s\leq \tau + t} { F}^{-1}\big(X^{I^{\star}}_s \big)  -  \sup_{ \tau \leq s\leq \tau + t} { F}^{-1}\big( X^{\tilde I^{\star}}_s   \big),
\end{align}
which, by 
the Lipschitz regularity of ${ F}^{-1}$, yields
\begin{align}
\big| I^{\star,2}_{\tau+t} - \tilde I^{\star,2}_{\tau+ t}  \big| & \leq \kappa \sup_{\tau \leq s\leq \tau + t }  \big| X^{I^{\star}}_s - X^{\tilde I^{\star}}_s    \big| 
\intertext{(by \eqref{eq:uniqueness_proof})}
&\leq \kappa   \int_{0}^{t} \Big( \big| X^{I^{\star}}_{\tau + s} - X^{\tilde I^{\star}}_{\tau + s}    \big| +\big| I^{\star,1}_{\tau + s} - \tilde I^{\star,1}_{\tau + s}  \big| +\big| I^{\star,2}_{\tau + s} - \tilde I^{\star,2}_{\tau + s}  \big| \Big) ds. \label{eq:uniqueness_proof_2}
\end{align}
By same argument, estimate \eqref{eq:uniqueness_proof_2} can be obtained for $\big| I^{\star,1}_{\tau+t} - \tilde I^{\star,1}_{\tau+ t}  \big|$. Therefore, Gronwall's inequality yields
\begin{equation}
\big(X^{I^{\star}}_t, I^{\star,1}_{t}, I^{\star,2}_{t} \big) = \big(X^{\tilde I^{\star}}_t, \tilde I^{\star,1}_{t} , \tilde I^{\star,2}_{t} \big) , \qquad t\in(\tau, \infty),
\end{equation}
which concludes the proof of pathwise uniqueness.

\vspace{2pt}

\emph{Step 2: weak existence.} 
Recalling that $W$ is a fixed Brownian motion on a given filtered probability space $(\Omega, \mathcal{F}, ( \mathcal{F}_t)_{t\geq 0},\Pb)$, we show that there exist a Brownian motion $\tilde W$ on the same probability space, and a pair $(X_t^{I^{\star}},I^{\star}_t)_{t\geq 0}$ satisfying \eqref{eq:I_markov_star_bis}-\eqref{eq:system_XY_proof} with $W$ replaced by $\tilde W$. 

Let $\Delta \in\R^2$ be defined as
\begin{equation}
\Delta_1 :=  \big( {\bf F}^{-1}(\bar x,\bar y_2) - \bar y_1 \big)^+ ,\qquad \Delta_2 :=  \big( {\bf F}^{-1}(\bar x,\bar y_1) - \bar y_2 \big)^+.
\end{equation}
As we are considering $\bar y_2 \leq \bar y_1$, we have
\begin{equation}
0\leq \Delta_1 \leq \Delta_2.
\end{equation}
Let $(X_t)_{t\geq 0}$ be the unique solution to the SDE
\begin{equation}
dX_t = k  \big(\mu - X_t \big) dt + \sigma\, d W_t, \qquad X_0 = \bar x,
\end{equation} 
and set
\begin{equation}
\tilde I_t :=
\Big( \sup\limits_{0 \leq s \leq t} {\bf F}^{-1}( X_s , \bar y_1) - (\bar y_2 + \Delta_2 ) \Big)^+ , \qquad t\geq 0 .
\end{equation} 
Note that $\tilde I$ is continuous (
as $ {\bf F}^{-1}$ is continuous by assumption), non-decreasing and $\tilde I_0 = 0$. 
Set now the stopping time
\begin{equation}
 \tau := 
\begin{cases}
\inf  H & \text{if }  H\neq\emptyset\\
+\infty & \text{if }  H= \emptyset
\end{cases}, \qquad\quad \begin{aligned}
 H &:=  \{  t\geq 0: \bar y_2 + \Delta_2 + \tilde I_t \geq \bar y_1  \}\\
 &\ =  \{  t\geq 0:  X_{t} \geq F(\bar y_1 , \bar y_1)  \}
\end{aligned}.
\end{equation}
Note that
\begin{equation}\label{eq:prop_stop_bar_pen}
\Delta_1 = 0 \Longleftarrow x< F(\bar y_1, \bar y_1) \Longleftrightarrow \bar y_2 + \Delta_2 < \bar y_1  \Longleftrightarrow  \tau > 0   
\end{equation}
and
\begin{equation}\label{eq:prop_stop_bar_exa}
\bar y_1 + \Delta_1 = \bar y_2 + \Delta_2+\tilde I_{ \tau} .
\end{equation}
Define now
\begin{equation}
\hat I_{t}: = \sup\limits_{ \tau \leq s \leq t}  F^{-1}(X_s) - ( \bar y_1 + \Delta_1 ) ,
 \qquad   t\geq \tau.
\end{equation} 
Owing to 
the continuity of $F^{-1}$ (again, by assumption), we have that $\hat I$ is continuous, non-decreasing and, by \eqref{eq:prop_stop_bar_pen} together with the continuity of $ X$, we also have $\hat I_{ \tau} = 0$. 
Finally, for any $t\geq 0$ we can set
\begin{equation}
I^{\star,1}_t := \begin{cases}
\Delta_1 & \text{if }  0\leq t<\tau \\
\Delta_1 + \hat I_t & \text{if }  t \geq\tau \\
\end{cases}, \qquad 
I^{\star,2}_t:= \begin{cases}
\Delta_2+\tilde I_t & \text{if }  0 \leq t<\tau \\
\Delta_2+\tilde I_{ \tau}  +\hat I_t & \text{if }  t \geq \tau \\
\end{cases}.
\end{equation}
Note that $\hat I_{\tau} = 0$ implies that the process $I^{\star} = (I^{\star,1},I^{\star,2})$ defined above is continuous. Also, \eqref{eq:prop_stop_bar_exa} yields 
\begin{equation}
\bar y_1 +  I^{\star,1}_s = \bar y_2 +  I^{\star,2}_s, \qquad s\geq \tau.
\end{equation}
Therefore, a direct inspection shows that the pair $(X,I^{\star})$ satisfies \eqref{eq:I_markov_star_bis}-\eqref{eq:system_XY_proof} with $\beta = 0$. 

Observe now that, by construction, the process $ I^{\star}$ is bounded. Thus, by Girsanov's theorem (see \cite{MR2791231}, Theorem 10.5), there exists a probability measure $\tilde\Pb$ under which the process 
\begin{equation}
\tilde W_t = W_t + \frac{k \beta}{\sigma} \int_0^t \big( \bar y_1 + I^{\star,1}_s + \bar y_2 + I^{\star,2}_s\big)  d s, \qquad t\geq 0,
\end{equation}
is a Brownian motion. Therefore, the pair $(X,I^{\star})$ satisfies \eqref{eq:I_markov_star_bis}-\eqref{eq:system_XY_proof} with $W$ replaced by $\tilde W$. This concludes the proof of weak existence for the system \eqref{eq:I_markov_star_bis}-\eqref{eq:system_XY_proof}.
\end{proof}

\subsection{On the regularity of the equilibrium solution to the HJB system}\label{sec:regularity_V_discus}

In this section we present an informal discussion about the structure and the regularity of the value function $V$, namely the equilibrium solution to the variational problem VP in the sense of Definition \ref{def:equilibrium_sol}.


\medskip
Recalling Notation \ref{notation_V_F}, we first observe that, by conditions (iii) in Definition \ref{def:regular_sol_VP}, on the joint waiting region 
the following elliptic system should be satisfied:
\begin{equation}\label{eq:1a}
(\cL^y-\rho)V_i(x,y)+x y_i =0,  \qquad i=1,2, \quad (x,y)\in {\mathbb{W}^1\cap\mathbb{W}^2}.
\end{equation}
Equation \eqref{eq:1a} can be interpreted as a second order ODE parameterized by the vector $y$, for which $R_i$ in \eqref{eq:Ri} is a particular solution. Following the arguments in \cite{MR4305783}, one finds that $V$ ($=V_1$) necessarily takes the form
\begin{equation}\label{eq:eq_Vv1}
V(x,y) = v_1(x,y), \qquad \text{on } {\mathbb{W}^1\cap\mathbb{W}^2},
\end{equation}
with
\begin{equation}\label{eq:2a}
{v_1(x,y)=m_1(y)\psi(x+\beta \langle \mathbf{1},y \rangle)+R_i(x,y)}, \qquad \psi(z)=\frac{1}{\Gamma(\frac \rho k)}\int_0^{\infty}t^{\frac \rho k -1}e^{-\frac{t^2}{2}+\frac{z-\mu}{\sigma}t\sqrt{2k}}dt.
\end{equation}
Here, $\psi$ 
is a positive and strictly increasing fundamental solution for the homogeneous ODE $$(\cL^{(0,0)}-\rho)V(x,y)=0,$$
and $m_1(y)$ is a function of $y$ to be determined. Note that the boundary condition \eqref{eq:terminal_cond_bis} (cf. Definition \ref{def:regular_sol_VP}-(iv)) implies $m(0)=0$.
Similarly to the one-step case, $R_1$ represents the value of selling permanently $y_1$ units of energy, which is the initial level of installed power, but here we also add the product between $m_1$ and $\psi$, which may be interpreted as the value of the option to increase the installed power. 
\begin{remark}\label{rem:fund_sol}
The fundamental solution $\psi$ in \eqref{eq:2a} is smooth. All its derivatives are positive, strictly increasing. 
\end{remark}
Employing once more conditions (iii) in Definition \ref{def:regular_sol_VP}, in particular by the transport equations in \eqref{eq:regular_sol_bis}-\eqref{eq:regular_sol_ter}, and by the continuity of $V$, we can deduce the structure of $V$ in the remaining regions of the domain as follows:
\begin{align}
V(x,y)=\begin{cases}
v_1(x,y), & (x,y)\in \mathbb{W}^1\cap \mathbb{W}^2, \\
v_1\big(x,y_1,F^{-1}_{y_1}(x)\big), & (x,y)\in \mathbb{W}^1\cap \mathbb{I}^2 ,\\
v_1\big(x, F^{-1}_{y_2}(x),y_2\big)-c\big(F^{-1}_{y_2}(x)-y_1\big), & (x,y)\in \mathbb{I}^1\cap \mathbb{W}^2 , \\
v_1\big(x, F^{-1}(x),F^{-1}(x)\big)-c\big( F^{-1}(x)-y_1\big), & (x,y)\in \mathbb{I}^1\cap \mathbb{I}^2,
\end{cases}\label{eq:value}
\end{align}
where the functions $F^{-1}$ and $F^{-1}_{y_1}$ are as defined in \eqref{eq:invF_dyn} and \eqref{eq:invFi_dyn}, respectively.
Let us interpret the system above in relation to the optimal strategy in \eqref{eq:I_markov_star_tris}-\eqref{eq:I_markov_star_bis}.
Formally speaking, when the state $(X_t,Y_t)$ starts from the region $\mathbb{I}^1\cap \mathbb{W}^2$, Agent $1$ pushes the process to the boundary $\p F_1$ along the direction $(0,1,0)$, so as to increase his level of installed power by $F^{-1}_{y_2}(x)-y_1$. 
The associated payoff to this action is the difference between the continuation value starting from the new state 
$(x,F^{-1}_{y_2}(x),y_2)$ and the associated costs of installation $c\left( F^{-1}_{y_2}(x)-y_1\right)$. At the same time Agent $2$ enjoys a payoff given by his continuation value computed at the new state. 
In particular, if Agent $1$ has to restrict his action due to the capacity limit $\theta$, then the associated payoffs for Agents $1$ and $2$ are $R_1(x,\theta-y_2,y_2)-c(\theta-y_1-y_2)$ and $R_1(x,y_1,\theta-y_1)$, respectively. 
On the other hand, if the process $(X_t,Y_t)$ starts from $\mathbb{I}^1\cap \mathbb{I}^2$, then both agents install and push the process to $(x,F^{-1}(x), F^{-1}(x))$, so as to increase their level of installed power by $F^{-1}(x)-y_1$ and  $F^{-1}(x)-y_2$. In particular, if saturation occurs, then the associated payoffs for Agents $1$ and $2$ are $R_1(x,\theta/2,\theta/2)-c(\theta/2 - y_1)$ and $R_1(x,\theta/2,\theta/2)-c(\theta/2 - y_2)$, respectively. 

\medskip
With the representation \eqref{eq:4a} at hand, 
we can make an informed discussion about the regularity which is expected for the value function $V$. To start, notice that the functions $\psi$ and $R_1$ are smooth with respect to all variables, thus the equalities in \eqref{eq:regular_sol_bis}-\eqref{eq:regular_sol_ter} can be interpreted in the classical sense and the regularity of $V$ on its domain only depends on that of $m_1$ and of the free boundaries with respect to the variable $y$. 

\paragraph{Regularity in $y$.} By Definition \ref{def:regular_sol_VP}-(ii), we have $\p_{y_1}V_1 \in C (\overline{\mathbb{W}^2})$. This and \eqref{eq:eq_Vv1}-\eqref{eq:2a} yield $\partial_{y_1} m_1\in C(D)$. Also, by \eqref{eq:regular_sol_bis} $\p_{y_1}V_1$ is 
constant and equal to $c$ on $\mathbb{I}^1\cap \mathbb{W}^2$, and thus $\p_{y_1}V_1 \in C (\overline{\mathbb{W}^2})$ if and only if the following ($1$st order) smooth-fit condition holds} on the graph of the boundary $F_1$:
\begin{equation}\label{eq:4a}
c=\left(\p_{y_1}v_1\right)(F_1(y),y) , \qquad D\cap \{y_2\geq y_1\}.
\end{equation}
Furthermore, again by Definition \ref{def:regular_sol_VP}-(ii), we have $\p_{y_2} V\in C(U_r)$ for some $r>0$, with $U_r := \{(x,y) \in \R^3 : (x+r,y)\in \mathbb{I}^2\}$. This and \eqref{eq:eq_Vv1}-\eqref{eq:2a} yield $\partial_{y_2} m_1\in C(D\cap\{y_1<\theta/2\})$. Also, by \eqref{eq:regular_sol_ter}, $\p_{y_2}V_1 =0$ 
constantly on $\mathbb{I}^2$, and therefore the following ($1$st order) smooth-fit condition needs to hold on the graph of the boundary $F_2$:
\begin{equation}\label{eq:4a_quat}
0=\left(\p_{y_2}v_1\right)(F_2(y),y), \qquad D\cap \{y_2\leq y_1\}.
\end{equation}
Also, assuming both \eqref{eq:4a} and \eqref{eq:4a_quat}, one can check the continuity of $\p_{y_2} V$ on the boundary that separates $\mathbb{I}^1\cap \mathbb{W}^2$ from $\mathbb{I}^1\cap \mathbb{I}^2$, 
and thus that $\p_{y_2} V\in C(U_r)$.
\paragraph{Regularity in $x$.} By Definition \ref{def:regular_sol_VP}-(ii), we first have $\p_{x}V_1\in C(\R\times D)$. We show that the latter is ensured by \eqref{eq:4a}-\eqref{eq:4a_quat}. 
By 
\eqref{eq:value}, and recalling also \eqref{eq:Ri}, 
a direct computation shows that
\begin{equation}\label{eq:reg_x}
\p_x V_1(x,y)={\p_x v_1(x,y)}=m_1(y)\psi'(x+\beta\langle 1,y \rangle)+
\frac{y_1}{\rho+k}, \qquad (x,y)\in \mathbb{W}^1\cap \mathbb{W}^2.
\end{equation}
and
\begin{align}
\p_x V_1(x,y)&= \p_x F^{-1}_{y_2}(x)\left[({\p_{y_1}v_1})\big(x, F^{-1}_{y_2}(x),y_2\big)-c\right]+(\p_xv_1)\big(x, F^{-1}_{y_2}(x),y_2\big)\\ 
&=(\p_xv_1)\big(x, F^{-1}_{y_2(x)},y_2\big)\label{eq:reg_x211}\\
& = m_1\big( F^{-1}_{y_2}(x),y_2\big)\psi'\big(x+\beta\big\langle 1,(F^{-1}_{y_2}(x),y_2) \big\rangle\big)+
\frac{F^{-1}_{y_2}(x)}{\rho+k}, \qquad (x,y)\in \text{int} (\mathbb{I}^1\cap \mathbb{W}^2),
\label{eq:reg_x21}
\end{align}
where the second equality above stems from $ \p_x F^{-1}_{y_2}(x)=0$ in the regime $\theta>\theta/2$, $x\ge F_1(\theta-y_2,y_2)$, and from condition \eqref{eq:4a} in the other cases. Owing to the continuity of $m_1$, $\psi'$ and $F^{-1}_{y_2}(x)$ (in both variables $y_2,x$), \eqref{eq:reg_x}-\eqref{eq:reg_x21} implies 
that $\p_x V_1$ is continuous on $\mathbb{W}_2$. Similarly, employing also \eqref{eq:4a_quat}, one can show that $\p_x V_1$ is continuous on the boundary of $\mathbb{W}_2$ as well, and thus on $\R\times D$.

We now turn our attention to another regularity assumption in Definition \ref{def:regular_sol_VP}-(ii), namely $\p_{xx}V\in C(\overline{\mathbb{W}^2})$. 
We show that the latter is ensured as long as the following 
(2nd order) smooth-fit condition on the graph of the boundary $F_1$ holds:
\begin{equation}\label{eq:4aa}
0={\left(\p_{x y_1}v_1\right)(F_1(y),y)}, \qquad D\cap \{y_1\leq y_2\},
\end{equation}
with {$v_1$} as in \eqref{eq:2a}. 
By \eqref{eq:reg_x}, we obtain
\begin{equation}\label{eq:reg_x2}
\p_{xx} V_1(x,y)={\p_{xx} v_1(x,y)}=m_1(y)\psi''(x+\beta\langle 1,y \rangle), \qquad (x,y)\in \mathbb{W}^1\cap \mathbb{W}^2,
\end{equation}
and, by \eqref{eq:reg_x211},
\begin{align}
\p_{xx} V_1(x,y)&=\p_x F^{-1}_{y_2}(x)(\p_{x y_1}v_1)\big(x, F^{-1}_{y_2}(x),y_2\big)+(\p_{xx}v_1)\big(x, F^{-1}_{y_2}(x),y_2\big)\\
&=(\p_{xx}v_1)\big(x, F^{-1}_{y_2}(x),y_2\big) =m_1\big( F^{-1}_{y_2}(x),y_2\big)\psi''\big(x+\beta\big\langle 1,(F^{-1}_{y_2}(x),y_2) \big\rangle\big), \qquad (x,y)\in \text{int} (\mathbb{I}^1\cap \mathbb{W}^2),\label{eq:reg_x22}
\end{align}
where the second equality above stems from $ \p_x F^{-1}_{y_2}(x)=0$ in the regime $\theta>\theta/2$, $x\ge F_1(\theta-y_2,y_2)$, and from condition \eqref{eq:4aa} in the other cases.  Owing to the continuity of $m_1$, $\psi''$ and $F^{-1}_{y_2}(x)$ (in both variables $y_2,x$), \eqref{eq:reg_x2}-\eqref{eq:reg_x22} implies 
that $\p_{xx} V_1$ is continuous on $\overline{\mathbb{W}_2}$.

Finally, we need to address the Lipschitz condition \eqref{eq:Lip_x} for $\p_x V_1$. 
By \eqref{eq:reg_x2}-\eqref{eq:reg_x22}, together with Remark \ref{rem:fund_sol}, and observing that $F^{-1}_{y_2}(x)$ is bounded, we infer that $\p_{xx}V$ is bounded on $\mathbb{W}_2$. Likewise, recalling \eqref{eq:4a_quat} and the continuity of $\p_{y_2}m$ on {$D\cap\{y_2<\frac{\theta}{2}\}$}, one can show that $\p_{xx}V$ is also bounded on $\mathbb{I}_2$. The Lipschitz assumption \eqref{eq:Lip_x} is satisfied with $L_R=\max_{x\in [-R,R],y\in D}|\p_{xx}V(x,y)|$.

\paragraph{Sublinear growth in $x$.} 
With similar reasoning we obtain that $\partial_x V$ is bounded, and thus the growth assumption \eqref{eq:linear_growth_V} in Definition \ref{def:regular_sol_VP}-(v) is satisfied.

\medskip 

To sum up, we identified a system composed by three conditions, namely \eqref{eq:4a}-\eqref{eq:4a_quat}-\eqref{eq:4aa}, involving the values of $m_1$ and its derivatives $\partial_{y_1}m_1$, $\partial_{y_2}m_1$ on the free boundary, which guarantees that the regularity assumptions of Definition \ref{def:regular_sol_VP} are satisfied. By \eqref{eq:2a} and \eqref{eq:Ri}, this system can be written explicitly as
\begin{equation}
\begin{cases}
c=\p_{y_1}m_1(y)\psi (F_1(y)+\beta\langle {\bf 1}, y \rangle)+\beta m_1(y)\psi' (F_1(y)+\beta\langle {\bf 1}, y \rangle) +\frac{x\rho+\mu k - \b k \langle {\bf 1},y\rangle - \beta k y_1 }{\rho (\rho+k)}&  y\in D\cap \{y_1\leq y_2\},  \\
0=\p_{y_2}m_1(y)\psi (F_2(y)+\beta\langle {\bf 1}, y \rangle)+\beta m_1(y)\psi' (F_2(y)+\beta\langle {\bf 1}, y \rangle)-\frac{\b k y_1}{\rho (\rho+k)}&  y\in D\cap \{y_1\geq y_2\},  \\
0=\p_{y_1}m_1(y)\psi'(F_1(y)+\beta\langle {\bf 1}, y \rangle)+\beta m_1(y)\psi'' (F_1(y)+\beta\langle {\bf 1}, y \rangle)+(\rho+k)^{-1}, &  y\in D\cap \{y_1\leq y_2\}.  
\end{cases}\label{eq:system}
\end{equation}
System 
\eqref{eq:system} will serve as the starting point for inferring the unknown functions $m$ and $F_1$ and proving the inequalities in \eqref{eq:regular_sol}-\eqref{eq:regular_sol_bis}. 
This analysis requires a dedicated investigation and is the subject of ongoing research.

\bibliographystyle{acm}
\bibliography{bib}

\end{document}